\setlist[enumerate]{label=\normalfont{(\roman*)}}
\newcommand{\refandname}[1]{\cref{#1} (\nameref*{#1})}
\newcommand{\nameandref}[1]{\nameref*{#1} (\cref{#1})}
\definecolor{DarkDesaturatedBlue}{HTML}{3A3556}
\definecolor{VividOrange}{HTML}{F15918}
\definecolor{PureOrange}{HTML}{FFBA00}
\definecolor{LightGrayishPink}{HTML}{EEC5D5}
\definecolor{VerySoftBlue}{HTML}{B5AFDB}
\def\moverlay{\mathpalette\mov@rlay}
\def\mov@rlay#1#2{\leavevmode\vtop{%
		\baselineskip\z@skip \lineskiplimit-\maxdimen
		\ialign{\hfil$\m@th#1##$\hfil\cr#2\crcr}}}
\newcommand{\charfusion}[3][\mathord]{
	#1{\ifx#1\mathop\vphantom{#2}\fi
		\mathpalette\mov@rlay{#2\cr#3}
	}
	\ifx#1\mathop\expandafter\displaylimits\fi}
\newcommand{\cupdot}{\charfusion[\mathbin]{\cup}{\cdot}}
\newcommand{\bigcupdot}{\charfusion[\mathop]{\bigcup}{\cdot}}
\newcommand{\sta}[1]{\mathop{\mkern 0mu\mathrm{sta}}\nolimits(#1)}
\newcommand{\ter}[1]{\mathop{\mkern 0mu\mathrm{ter}}\nolimits(#1)}
\renewcommand{\int}[1]{\mathop{\mkern 0mu\mathrm{int}}\nolimits(#1)}
\tikzset{snake it/.style={decorate, decoration=snake}}
\definecolor{DarkDesaturatedBlue}{HTML}{3A3556}
\definecolor{VividOrange}{HTML}{F15918}
\definecolor{PureOrange}{HTML}{FFBA00}
\definecolor{LightGrayishPink}{HTML}{EEC5D5}
\definecolor{VerySoftBlue}{HTML}{B5AFDB}
\newcommand{\triple}[7]{
	
	\ifx\relax#4\relax
	\def\qoffs{0pt}
	\else
	\def\qoffs{#4}
	\fi
	
	\def\qhedge{
		($#1+#3!\qoffs!-90:#2-#3$) --
		($#2+#1!\qoffs!-90:#3-#1$) --
		($#3+#2!\qoffs!-90:#1-#2$) -- cycle}
	
	\coordinate (12) at ($#1!\qoffs!90:#2$);
	\coordinate (13) at ($#1!\qoffs!-90:#3$);
	\coordinate (23) at ($#2!\qoffs!90:#3$);
	\coordinate (21) at ($#2!\qoffs!-90:#1$);
	\coordinate (31) at ($#3!\qoffs!90:#1$);
	\coordinate (32) at ($#3!\qoffs!-90:#2$);
	
	\def\nqhedge{
		(13) let \p1=($(13)-#1$), \p2=($(12)-#1$) in
		arc[start angle={atan2(\y1,\x1)}, delta angle={atan2(\y2,\x2)-atan2(\y1,\x1)-360*(atan2(\y2,\x2)-atan2(\y1,\x1)>0)}, x radius=\qoffs, y radius=\qoffs] --
		(21) let \p1=($(21)-#2$), \p2=($(23)-#2$) in
		arc[start angle={atan2(\y1,\x1)}, delta angle={atan2(\y2,\x2)-atan2(\y1,\x1)-360*(atan2(\y2,\x2)-atan2(\y1,\x1)>0)}, x radius=\qoffs, y radius=\qoffs] --
		(32) let \p1=($(32)-#3$), \p2=($(31)-#3$) in
		arc[start angle={atan2(\y1,\x1)}, delta angle={atan2(\y2,\x2)-atan2(\y1,\x1)-360*(atan2(\y2,\x2)-atan2(\y1,\x1)>0)}, x radius=\qoffs, y radius=\qoffs] --
		cycle}
	
	\ifx\relax#5\relax
	\def\qlwidth{1pt}
	\else
	\def\qlwidth{#5}
	\fi
	
	\ifx\relax#7\relax
	\fill \nqhedge;
	\else
	\fill[#7]\nqhedge;
	\fi
	
	\ifx\relax#6\relax
	\draw[line width=\qlwidth,rounded corners=\qoffs]\nqhedge;
	\else
	\draw[line width=\qlwidth,#6]\nqhedge;
	\fi
}
\definecolor{darkgreen}{rgb}{0.1,0.6,0.0}
\definecolor{ruby1}{RGB}{226,47,39}
\definecolor{ruby2}{RGB}{177,47,39}
\theoremstyle{plain}
\newtheorem{theorem}{Theorem}[section]
\newtheorem{proposition}[theorem]{Proposition}
\newtheorem{corollary}[theorem]{Corollary}
\newtheorem{lemma}[theorem]{Lemma}
\newtheorem{conjecture}[theorem]{Conjecture}
\newtheorem{claim}[theorem]{Claim}
\theoremstyle{remark}
\theoremstyle{definition}
\newtheorem{definition}[theorem]{Definition}
\numberwithin{equation}{section}
\DeclareMathOperator{\st}{st}
\DeclareMathOperator{\hc}{hc}
\DeclareMathOperator{\prob}{Pr}
\DeclareMathOperator{\expectation}{\mathbf{E}}
\def\eps{\varepsilon}
\def\calT{\mathcal{T}}
\def\calA{\mathcal{A}}
\def\calB{\mathcal{B}}
\def\le{\leqslant}
\def\leq{\leqslant}
\def\ge{\geqslant}
\def\geq{\geqslant}
\def\kdos{K^{(k)}(2)}
\def\degkdos{d^K}
\renewcommand{\dotsc}{...}
\renewcommand{\phi}{\varphi}
\title[]{Dirac-type conditions for spanning \\ bounded-degree hypertrees}
\date{}
\author[M.~Pavez-Sign\'e]{Mat\'ias Pavez-Sign\'e}
\address[M.~Pavez-Sign\'e]{Mathematics Institute, Zeeman Building, University of Warwick, Coventry CV4 7AL, UK}
\email{matias.pavez-signe@warwick.ac.uk}
\author[N.~Sanhueza-Matamala]{Nicol\'as Sanhueza-Matamala}
\address[N.~Sanhueza-Matamala]{Departamento de Ingeniería Matemática, Facultad de Ciencias Físicas y Matemáticas, Universidad de Concepción.}
\email{nicolas@sanhueza.net}
\author[M. Stein]{Maya Stein}
\address[M.~Stein]{Departamento de Ingenier\'ia Matem\'atica y Centro de Modelamiento Matemático (CNRS IRL 2807), Universidad de Chile, Beauchef 851, Santiago, Chile.}
\email{mstein@dim.uchile.cl}
\thanks{MPS was supported by ANID Doctoral scholarship ANID-PFCHA/Doctorado Nacional/2017-21171132 while he was affiliated to the Universidad de Chile. NSM acknowledges support by the Czech Science Foundation, grant number GA19-08740S with institutional support RVO: 67985807. MS was supported by ANID Regular Grant 1221905, by MathAmSud 20MATH-01, by FAPESP-ANID Investigaci\'on Conjunta grant 2019/13364-7, and by ANID/BASAL ACE210010 y FB210005.}
\begin{document}

\begin{abstract}
	We prove that for fixed $k$, every $k$-uniform hypergraph on $n$ vertices and of minimum codegree at least $n/2+o(n)$ contains every spanning tight $k$-tree of bounded vertex degree as a sub\-graph.
	This generalises a well-known result of Koml\'os, S\'ark\"ozy and Szemer\'edi for graphs.
	Our result is asymptotically sharp.
	We also prove an extension of our result to hypergraphs that satisfy some weak quasirandomness conditions. 
\end{abstract}

\maketitle
\thispagestyle{empty}
\vspace{-0.4cm}

\section{Introduction}
Forcing spanning substructures with minimum degree conditions  is a central topic in extremal graph theory.
For instance, a classic result of Dirac~\cite{Dirac1952} from~1952 asserts that any graph on $n\ge 3$ vertices with minimum degree at least $n/2$ contains a Hamilton cycle.
In the same spirit,  Bollob\'as~\cite{BollobasEGT} conjectured in the 1970s that graphs on $n$ vertices with minimum degree at least $n/2+o(n)$ contain every $n$-vertex tree of bounded maximum degree as a subgraph.
Koml\'os, S\'ark\"ozy and Szemer\'edi~\cite{KSS95} proved this conjecture in 1995. 
\begin{theorem}[Koml\'os, S\'ark\"ozy and Szemer\'edi~\cite{KSS95}]\label{thm:KSSoriginal} For all $\gamma>0$ and $\Delta\in\mathbb N$, there is $n_0$ such  that every graph $G$ on $n\ge n_0$ vertices with 
	$\delta(G)\ge (1/2+\gamma)n$ contains every $n$-vertex tree $T$ with  
	$\Delta(T)\le \Delta$.
\end{theorem}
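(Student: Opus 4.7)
The natural approach is the regularity method. First apply Szemer\'edi's regularity lemma to $G$ with parameters $\eps \ll d \ll \gamma, 1/\Delta$, obtaining an $\eps$-regular partition $V_1,\dots,V_t$ into clusters of almost equal size $m\approx n/t$. Let $R$ be the reduced graph on $[t]$ where $ij\in E(R)$ precisely when $(V_i,V_j)$ is $\eps$-regular with density at least $d$. A routine averaging argument transfers the minimum degree condition to $R$, yielding $\delta(R)\ge (1/2+\gamma/3)t$, so by Dirac's theorem $R$ contains a Hamilton cycle $C$. After moving a small proportion of each cluster into an ``exceptional'' set to be handled separately, one can further ensure that consecutive pairs along $C$ are \emph{super-regular}, forming the backbone into which $T$ will be embedded.

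Next, I would decompose $T$ into subtrees of controlled size. Since $\Delta(T)\le \Delta$, standard cutting arguments produce vertex-disjoint subtrees $T_1,\dots,T_s$ covering almost all of $T$, each of size close to $m$, together with a small set of ``connector'' vertices whose images in $G$ serve as bridges between consecutive $T_j$'s. Each $T_j$ is bipartite, and we identify its two colour classes with the two sides of a super-regular pair along $C$. The plan is then to walk along the cycle $C$, embedding the $T_j$ in order and using connector vertices to transition from one super-regular pair to the next; the flexibility here is provided by the fact that almost every vertex of a cluster has many neighbours in each adjacent cluster, so the images of the connectors can be chosen with plenty of room.

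The embedding of each $T_j$ into its designated super-regular pair is done greedily vertex by vertex, using that in an $\eps$-regular pair $(A,B)$ of density $d$, all but $\eps|A|$ vertices of $A$ have at least $(d-\eps)|B'|$ neighbours in any $B'\subseteq B$ with $|B'|\ge \eps|B|$. Because $\Delta(T)\le\Delta$, each greedy step consumes only a few choices out of many. The main obstacle---and the technical core of the argument---is completing the embedding of each $T_j$: the last few vertices must match perfectly against the final unused slots of the pair, and naive greediness breaks down once the remaining sides become small. This is exactly the content of the prototype blow-up lemma of Koml\'os, S\'ark\"ozy and Szemer\'edi, whose engine is a Hall-type matching argument applied to the leftover vertices on the two sides, using the degree information inherited from super-regularity. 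A final clean-up embeds the exceptional set using the $\gamma n/2$ slack in the minimum degree of $G$, which leaves many candidate images for each remaining vertex.
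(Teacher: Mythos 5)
First, a contextual remark: the paper does not prove this statement. It is the 1995 theorem of Koml\'os, S\'ark\"ozy and Szemer\'edi, cited in the paper as \cite{KSS95} and used only as motivation and as the $k=2$ base case that Theorem~\ref{theorem:spanning-codegree} generalises. So there is no in-paper proof to compare against; I am evaluating your sketch against the standard regularity-method proof, which is indeed the route the original authors took.

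Your high-level plan (regularity lemma, reduced graph, tree decomposition, embedding into super-regular pairs, blow-up-style completion, clean-up of exceptional vertices) is the correct framework. However, there is a genuine gap in the step where you ``identify [the] two colour classes [of $T_j$] with the two sides of a super-regular pair along $C$'' for subtrees $T_j$ ``each of size close to $m$''. You are implicitly assuming that the two colour classes of each $T_j$ can be made to fit the two clusters of its assigned pair, and that the usage of each cluster by the two subtrees that share it adds up to exactly $m$. Neither holds for free. Even with $\Delta(T)\le\Delta$, the two colour classes of a tree (and hence of its subtrees) may differ by a factor as large as $\Delta$ — this is recorded for general $k$ in Lemma~\ref{proposition:boundedsizecolourclasses}. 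If $T_j$ places $a_j$ vertices in $V_j$ and $m-a_j$ in $V_{j+1}$, then filling $V_{j+1}$ exactly forces $(m-a_j)+a_{j+1}=m$, i.e.\ $a_{j+1}=a_j$, and this constraint propagates around the whole cycle, forcing all subtrees to have identical colour-class splits. There is no reason a cutting of $T$ into size-$\approx m$ pieces can achieve this, and a factor-$2$ imbalance is far too large for the exceptional-set clean-up to absorb.

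The standard fix — used both in the original KSS proof and, at higher uniformity, in this paper — is to cut $T$ into \emph{much} smaller subtrees (of size $\beta n$ with $\beta\ll 1/t$, cf.\ Proposition~\ref{prop:cut:trees}), to allow each pair/edge of the reduced graph to receive many pieces, and to track and rebalance the colour-class load as the embedding proceeds; in this paper that bookkeeping appears as condition (E3) of Lemma~\ref{lem:embedding} and condition (P3) in the proof of Proposition~\ref{proposition:almost-spanning}. Relatedly, the original argument allocates subtrees to a (near-perfect) matching in the reduced graph with a separate connecting mechanism, rather than marching along a Hamilton cycle; either backbone can be made to work, but the cycle variant makes the cluster-sharing constraint you are glossing over even more rigid. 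Your sketch is salvageable, but as written the decomposition granularity and the balance bookkeeping are missing, and without them the embedding cannot be made to fill the clusters exactly.
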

In recent years, many efforts have been made to extend Dirac's theorem to $k$-uniform hypergraphs, also called \emph{$k$-graphs}. 
The \emph{minimum codegree} of a $k$-graph $H$, denoted $\delta_{k-1}(H)$, is the largest number $m$ such that every set of $k-1$ vertices from $H$ is contained in at least $m$ edges of $H$.
A notable result by R\"odl, Ruci\'nski and Szemer\'edi~\cite{RRS08} states that $k$-graphs on $n$ vertices and minimum codegree at least $n/2+o(n)$ contain a \emph{tight  Hamilton cycle}, where a tight Hamilton cycle consists of a cyclic ordering of the vertices of the $k$-graph such that every $k$ consecutive vertices in this ordering form an edge. More Dirac-type results for Hamilton cycles can be found in the survey~\cite{SimonovitsSzemeredi2019} by Simonovits and Szemer\'edi and the references therein.

In the present paper, we extend the Koml\'os--S\'ark\"ozy--Szemer\'edi theorem to $k$-graphs (Theorem~\ref{theorem:spanning-codegree}). To the best of our knowledge, our result is the first Dirac-type result for tightly connected spanning structures other than  {\it tight paths}, {\it tight cycles}, or \textit{triangulations of $2$-spheres}. The structures considered in our result are referred as to \emph{hypertrees} or \emph{tight $k$-trees}, which are defined next.

A \emph{tight $k$-tree} is a $k$-graph defined iteratively as follows: a single $k$-uniform edge is a tight $k$-tree; any $k$-graph obtained from a tight $k$-tree $T$ by adding a new vertex $v$ and a new edge $e$, such that $v\in e$ and $|e\cap e'|=k-1$ for some edge $e'\in E(T)$, is also a tight $k$-tree.
Observe that tight $2$-trees are the usual trees in graphs, since trees in graphs can be defined by successively adding leaves.
Also, the well-known $k$-uniform tight paths are tight $k$-trees.
Since no other kinds of trees will be considered here, we  usually just write \emph{$k$-tree} to refer to a tight $k$-tree.  

Extremal problems for $k$-trees have a long history.
In 1984, Kalai conjectured~\cite[Conjecture 3.6]{FranklFuredi87} that every $k$-graph on $n$ vertices with more than $\frac{t-1}{k} \binom{n}{k-1}$ edges
contains every $k$-tree with $t$ edges. For all $k$, this conjecture is tight for infinitely many $t$ and $n$.
In general, Kalai's conjecture is open, but there are partial and asymptotic results for special families of $k$-trees~\cite{FranklFuredi87, FurediJiang15, FJKMV2019, FJKMV17, FJKMV18}, among these are the (linear sized) tight paths~\cite{ABCM17},  for $k$-partite host $k$-graphs~\cite{Stein2019}, and for the case  $k=2$ (see~\cite{tree-survey} for references), where Kalai's conjecture reduces to the Erd\H os--S\'{o}s conjecture~\cite{Erdos64}.
Regarding spanning $k$-trees, it is also worth to mention the work of Georgakopoulos, Haslegrave, Montgomery, and Narayanan~\cite{GHMN2022}, who proved that large $n$-vertex $3$-graphs with minimum codegree at least $n/3 + o(n)$ have a spanning triangulation of a $2$-sphere, which in particular contains \emph{some} spanning $3$-tree.
In contrast, they also show that there are $3$-graphs with minimum codegree at least $ \lfloor n/3 \rfloor - 1$ where the largest tight $3$-tree has size at most $2 \lceil n/3 \rceil$.

Before stating our main result, we need a definition.
For a  $k$-graph $H$,  the \emph{maximum 1-degree} of $H$, denoted  $\Delta_1(H)$, is the maximum number $m$ such that some vertex of $H$ is contained in $m$ edges.
\begin{theorem} \label{theorem:spanning-codegree}
	For all $k, \Delta\ge 2$ and $\gamma > 0$, there is $n_0$ such that
	every $k$-graph $H$ on $n \ge n_0$ vertices with $\delta_{k-1}(H) \ge (1/2 + \gamma)n$
	contains every $k$-tree $T$ on $n$ vertices with $\Delta_1(T) \leq \Delta$.
\end{theorem}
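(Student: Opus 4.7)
The plan is to follow the general outline of the absorbing method combined with hypergraph regularity, in the spirit of R\"odl--Ruci\'nski--Szemer\'edi's tight Hamilton cycle proof and of the classical Koml\'os--S\'ark\"ozy--Szemer\'edi argument for spanning trees. First I would apply a hypergraph regularity lemma, ideally a regular slice lemma, to obtain a regular partition of $V(H)$ whose reduced $k$-graph $R$ inherits the codegree condition $\delta_{k-1}(R) \ge (1/2 + \gamma/2)|V(R)|$ up to lower-order error. At this stage I also set aside a small random reservoir $X\subseteq V(H)$ that will later be absorbed.

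The second step is to build an absorbing structure. Concretely, I would find a small tight subtree $T_A$ of $T$ together with an embedding of $T_A$ into a substructure $A\subseteq V(H)\setminus X$, with the key flexibility property that for any subset $X'\subseteq X$ of suitable size, the embedding can be re-routed to cover $V(A)\cup X'$ while still realising $T_A$. The natural way to produce $A$ is by stitching together many local absorbing gadgets, one for each candidate vertex $v\in V(H)$. Each gadget should be a small tight-tree configuration containing $v$ in such a way that a leaf of $T_A$ can be mapped either onto $v$ or onto a fixed alternative vertex. Building the gadgets uses the codegree condition together with a supersaturation/counting argument, while the bounded maximum $1$-degree of $T$ and the high codegree of $H$ ensure that leaves in $T_A$ are plentiful and admit many valid swaps. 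A connecting lemma, itself a consequence of $\delta_{k-1}(H)\ge (1/2+\gamma)n$, is then used to glue the gadgets together via short tight paths.

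Next I would partition the remainder $T\setminus T_A$ into a main tight subtree $T'$ of size $(1-\eta)n$ together with a short tight path linking $T_A$ to $T'$. Using that $R$ contains a tight Hamilton cycle (by Theorem~\ref{thm:KSSoriginal}'s hypergraph analogue of R\"odl--Ruci\'nski--Szemer\'edi), I can embed $T'$ cluster-by-cluster along this cycle. The actual vertex-by-vertex embedding is performed in the natural tight-tree order: each new edge adds exactly one vertex to a previously used $(k-1)$-tuple, so a standard greedy procedure based on regularity, or a hypergraph blow-up lemma, extends the embedding one vertex at a time while keeping the loads on each cluster balanced. The bound $\Delta_1(T)\le \Delta$ guarantees that each partial embedding has enough candidate extensions at every step.

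The main obstacle will be the absorbing step: designing a single absorber flexible enough to handle the rigid tight overlap of $k$-tree edges (which forces consecutive new leaves to attach to compatible $(k-1)$-tuples in $H$), yet guaranteed to exist under only the codegree hypothesis $\delta_{k-1}(H)\ge(1/2+\gamma)n$. This is where the extremal threshold $n/2$ genuinely enters, just as in~\cite{RRS08}: below it even connecting two $(k-1)$-tuples by a tight path can fail. Once the absorber, the reservoir, the connecting paths and the greedy regularity embedding are assembled in the correct order (absorber first, then almost-spanning embedding of $T'$, then absorption of the leftover sitting inside $X$), Theorem~\ref{theorem:spanning-codegree} follows.
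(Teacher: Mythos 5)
Your high-level plan (absorber + regularity + connecting + reservoir) matches the paper's broad architecture, but the middle step — embedding the almost-spanning part of $T$ — contains a genuine gap and describes a route the paper deliberately avoids.

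First, the paper uses only the \emph{weak} hypergraph regularity lemma (\cref{theorem:weakregularity}), not a regular slice lemma; weak regularity is not known to support a hypergraph blow-up lemma, so your fallback of ``a standard greedy procedure based on regularity, or a hypergraph blow-up lemma'' is not available here. What replaces it is a careful tree-decomposition machinery (Section~\ref{section:hypertrees}): layerings of rooted $k$-trees and a $(\beta,d)$-decomposition (\cref{prop:cut:trees}) that cuts $T$ into $O(1/\beta)$ rooted $k$-subtrees of size $O(\beta n)$, with controlled roots. Each small subtree is then embedded into a \emph{single} regular $k$-tuple of clusters using supersaturation and Erd\H{o}s's theorem on complete $k$-partite subgraphs (\cref{lemma:turanzero}, \cref{lemma:embeddingtrunk}, \cref{lem:embedding}), not by a greedy vertex-by-vertex pass. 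This decomposition step does real work that your proposal does not address; a single greedy walk in tight-tree order across weak-regular tuples will not keep cluster loads balanced when $T$ branches, and weak regularity alone will not provide the codegree control a greedy pass would need.

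Second, you propose embedding $T'$ ``cluster-by-cluster along'' a tight Hamilton cycle in the reduced graph. The paper uses an almost-perfect \emph{matching} in the reduced graph (\cref{lemma:matching}), not a Hamilton cycle. A tree branches in ways a cyclic cluster order cannot follow; instead the paper assigns each of the $O(1/\beta)$ pieces to its own matching edge (chosen to have spare room), and travels between pieces through the reservoir using the Connecting Lemma (\cref{lemma:strengthenedconnectinglemma}), which controls the exact length of the connecting walk. So the reduced-graph structure you need is a matching plus short connections through a reserved vertex set, not a Hamilton cycle. Your absorber sketch (local gadgets giving a leaf two possible images, stitched together) is in the right spirit — the paper formalises this with absorbing $X$-tuples where $X$ is the restricted link graph, a $(k-1)$-tree (\cref{lemma:linkgraph}, Section~\ref{section:absorption}) — but the almost-spanning embedding is where your plan, as written, would not go through.
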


The condition imposed on  $\delta_{k-1}(H)$ in Theorem~\ref{theorem:spanning-codegree} is best possible up to the term $\gamma n$ and an additive term depending on $k$, as shown by the next proposition (we postpone its proof until Section~\ref{section:lowerbounds}).
\begin{proposition} \label{proposition:lowerbound}
	For every $k \ge 2$ and for every $k$-tree $T$ on $n\ge k$ vertices, there is  a $k$-graph~$H$ on $n$ vertices not containing $T$, with $\delta_{k-1}(H) \ge \lfloor n/2\rfloor - f(T)$, where $f(T) \leq 2^k + k - 1$.
	Moreover, there are $k$-trees $T$ with $f(T)=k-1$.
\end{proposition}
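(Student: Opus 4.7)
The plan is to construct $H$ as a ``parity barrier'' depending on a subset $A\subseteq V(H)=V$ whose size will be chosen later. Let $B=V\setminus A$ and define $H$ by $E(H)=\{e\in\binom{V}{k}:|e\cap A|\text{ is even}\}$. A direct count shows that a $(k-1)$-set $S$ with $|S\cap A|=s$ has codegree in $H$ equal to $|A|-s$ when $s$ is odd (we must extend by a vertex of $A$) and $|B|-(k-1-s)$ when $s$ is even (we must extend by a vertex of $B$). Minimising over $s\in\{0,\dotsc,k-1\}$ yields
\[
\delta_{k-1}(H) \ge \min(|A|,|B|)-(k-1).
\]

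The aim now is to choose $|A|$ so that $T$ does not embed into~$H$. Suppose $\phi\colon V(T)\to V$ is a bijection realising $T\subseteq H$, and define $\chi\colon V(T)\to\{0,1\}$ by $\chi(v)=1$ iff $\phi(v)\in A$; then $\sum_{v\in e}\chi(v)$ is even for every $e\in E(T)$. Order the edges of $T$ as $e_1,\dotsc,e_{n-k+1}$ as in the definition of tight $k$-tree, so that for each $i\ge 2$ there is $j(i)<i$ with $e_i\setminus\{v_i\}\subseteq e_{j(i)}$, where $v_i$ is the new vertex of $e_i$. Since $v_i\notin e_{j(i)}$ we get $|e_i\cap e_{j(i)}|=k-1$, and we let $u_i$ be the unique vertex in $e_{j(i)}\setminus e_i$. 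Subtracting the even sums of $\chi$ over $e_i$ and $e_{j(i)}$ gives $\chi(v_i)\equiv\chi(u_i)\pmod 2$; as $u_i\in e_1\cup\{v_2,\dotsc,v_{i-1}\}$, induction on $i$ shows that $\chi$ is determined by $\chi|_{e_1}$. Thus there are at most $2^k$ maps $\chi$ arising in this way, so the quantity $|\phi^{-1}(A)|=|A|$ takes at most $2^k$ values for which an embedding can exist.

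The interval $[\lfloor n/2\rfloor-2^k,\lfloor n/2\rfloor]$ contains $2^k+1$ integers, so by pigeonhole we may choose $|A|$ in this interval avoiding all compatible values. With this choice, no embedding of~$T$ exists, and $\min(|A|,|B|)\ge\lfloor n/2\rfloor-2^k$, giving $\delta_{k-1}(H)\ge\lfloor n/2\rfloor-(2^k+k-1)$. This yields $f(T)\le 2^k+k-1$, as required.

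For the moreover statement, let $T^\star$ be the \emph{tight $k$-star} on $n\ge 2k$ vertices, with initial edge $\{a_1,\dotsc,a_k\}$ and further edges $\{a_1,\dotsc,a_{k-1},\ell_i\}$ for $i=1,\dotsc,n-k$. Applying the recursion above, every leaf satisfies $\chi(\ell_i)=\chi(a_k)=\sum_{j=1}^{k-1}\chi(a_j)\pmod 2$, and a case analysis on this common parity shows that the possible values of $|\phi^{-1}(A)|$ all lie in $\{0,\dotsc,k-1\}\cup\{n-k+2,\dotsc,n\}$. For $n\ge 2k$ we can therefore pick $|A|=\lfloor n/2\rfloor$, which is outside this range, so $T^\star\not\subseteq H$ and $\delta_{k-1}(H)\ge\lfloor n/2\rfloor-(k-1)$, giving $f(T^\star)\le k-1$. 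The main obstacle I anticipate is the propagation argument in the second paragraph: showing that the parity constraints are strong enough to determine $\chi$ from $\chi|_{e_1}$ crucially uses that consecutive edges of a tight $k$-tree share exactly $k-1$ vertices, a feature specific to the tight setting.
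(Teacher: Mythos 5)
Your proof is correct and follows essentially the same approach as the paper. The construction $E(H)=\{e:|e\cap A|\text{ even}\}$ is exactly the paper's $H(A,B)$ from Definition~\ref{definition:hab} (up to the choice of parity class, which does not affect the argument), and your parity-propagation argument that $\chi$ is determined by $\chi|_{e_1}$ is precisely the content of the paper's Lemma~\ref{lemma:lowerboundtool}: that lemma shows each class of the unique $k$-partition of $T$ must be sent entirely into $A$ or entirely into $B$, which is the same as saying $\chi$ is constant on each class and hence pinned down by its values on $e_1$. The bound of $2^k$ on the number of admissible sizes of $A$, the pigeonhole over $[\lfloor n/2\rfloor-2^k,\lfloor n/2\rfloor]$, the codegree estimate $\min(|A|,|B|)-(k-1)$, and the tight $k$-star witness for $f(T)=k-1$ all match the paper. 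One cosmetic point: the proposition treats $f(T)$ as a specific quantity (so that ``$f(T)=k-1$'' makes sense); the paper achieves this by \emph{defining} $f(T)=\lfloor n/2\rfloor-a(T)+k-1$, whereas you leave $f(T)$ implicit and therefore can strictly only claim $f(T^\star)\le k-1$. Making the definition explicit would close that small presentational gap.
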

Theorem~\ref{theorem:spanning-codegree} generalises to host graphs that have certain quasirandom properties.
For a $k$-graph $H$ and a  set  $F$ of distinct $(k-1)$-subsets of $V(H)$, we define the \emph{joint degree of $F$} as 
\begin{align}
\deg_H(F) = \big| \{v\in V(H) : \text{$f \cup \{ v \}\in H$ for each $f\in F$}\}\big|.\label{equation:jointdegree}
\end{align}
We say that $H$ is \emph{$(\varrho, h, \eps)$-typical} if $\left| \deg_H(F) - \varrho^{|F|} n \right| \leq \eps n$ for every set $F$ of $(k-1)$-sets such that $|F|\le h$.
We show that, for suitable choices of  $\varrho$ and $\eps$, every large $(\varrho, 2, \eps)$-typical $k$-graph contains every spanning $k$-tree with bounded degree.

\begin{theorem} \label{theorem:spanning-typical}
	For all $k, \Delta \ge 2$ and $\varrho > 0$, there are $n_0$ and $\eps_0>0$
	such that
	the following holds for all $0<\eps \leq \eps_0$.
	If $H$ is a $(\varrho, 2, \eps)$-typical $k$-graph on $n \ge n_0$ vertices,
	then $H$ contains every $k$-tree $T$ on $n$ vertices with $\Delta_1(T) \leq \Delta$.
\end{theorem}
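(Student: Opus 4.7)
My plan is to follow the same three-stage template as the proof of Theorem~\ref{theorem:spanning-codegree} and verify each stage using only the $(\varrho, 2, \eps)$-typicality hypothesis. The stages are: \textbf{(a)} build an \emph{absorbing structure} $A \subseteq V(H)$, a small sub-$k$-graph equipped with many local swap-moves that can absorb any small leftover vertex set into an embedding of $T$; \textbf{(b)} greedily embed $T$ minus a short connector piece into $V(H)\setminus A$ along a tight ordering of the edges of $T$; \textbf{(c)} use $A$ to absorb the unused vertices and complete the embedding. Note that one cannot simply invoke Theorem~\ref{theorem:spanning-codegree}: typicality with $h=2$ only yields codegree at least $(\varrho-\eps)n$, which is well below $n/2$ whenever $\varrho$ is small, so the proof must genuinely be reworked.

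The key observation is that $(\varrho, 2, \eps)$-typicality is precisely the amount of quasirandomness needed to drive the greedy embedding in step~\textbf{(b)}. For any single $(k-1)$-set $f$, the one-degree $\deg_H(f) = (\varrho \pm \eps)n$ guarantees that extensions exist; for any pair $\{f, f'\}$, the joint-degree bound $\deg_H(\{f, f'\}) = (\varrho^2 \pm \eps)n$ lets one avoid a constant number of simultaneous ``bad'' constraints while keeping the pool of candidate extensions linear in $n$. With this in hand, I would process the tree along a tight edge order and, at each step, embed the next tree vertex $v_i$ into a vertex $w \in V(H)\setminus A$ that extends the image of $e_i\setminus\{v_i\}$ to an edge and simultaneously satisfies a joint-degree invariant ensuring later vertices can still be placed. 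The connector piece of $T$ is then matched into $A$ using the same joint-degree estimate.

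The main obstacle is step~\textbf{(a)}: building the absorbing structure from typicality alone. For each vertex $v \in V(H)$ one needs many gadgets inside $A$ that can be rewired to include $v$, and the $h=2$ condition is only barely strong enough to count them. My plan is to fix a bounded-size absorber gadget adapted to tight $k$-trees and count extensions iteratively, applying the typicality bound at each step to get $\Theta(n^m)$ candidate gadgets per vertex. A random selection combined with standard concentration (Chernoff or Azuma) then produces a disjoint family covering every $v \in V(H)$ with $|A|=o(n)$. The delicate bookkeeping is to check that after removing $A$ the remaining $k$-graph is still $(\varrho, 2, \eps')$-typical for a slightly larger $\eps'$ (which follows from $|A|=o(n)$ and the typicality of $H$), so that step~\textbf{(b)} may be run inside $H\setminus A$. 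Once stages~\textbf{(a)} and \textbf{(b)} are complete, step~\textbf{(c)} reduces to a routine matching between leftover vertices and absorber slots, and the proof concludes as in the codegree case.
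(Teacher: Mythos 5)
Your route is genuinely different from the paper's, and it contains a substantial gap in step~\textbf{(b)}.

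The paper's proof of Theorem~\ref{theorem:spanning-typical} is a short reduction: it notes that $(\varrho,2,\eps)$-typicality implies \emph{both} weak quasirandomness (hence uniform density, as in Corollary~\ref{corollary:spanning-quasirandom}) \emph{and} $(\varrho^2-\eps)n$-largeness, and then invokes Theorem~\ref{theorem:spanning-uniformlydense} with $H_1 = H_2 = H$. The point you miss is that the minimum codegree hypothesis of Theorem~\ref{theorem:spanning-codegree} is used in two essentially independent ways inside its proof --- $\gamma n$-largeness for the connecting, reservoir and absorbing lemmas (Lemmas~\ref{lemma:strengthenedconnectinglemma}, \ref{lem:reservoir}, \ref{lem:embedding}, \ref{lemma:coveringtuples}), and codegree $\ge n/2$ only to extract an almost-perfect matching of clusters in the reduced graph (Lemma~\ref{lemma:matching}) --- and typicality supplies a substitute for each. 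Theorem~\ref{theorem:spanning-uniformlydense} separates these two roles explicitly, and, for a uniformly dense host, the regularity/matching step is even vacuous: any equitable partition $V_1,\dots,V_k$ of $V(H'')$ already forms a one-edge uniformly dense perfect matching. You are right that one cannot invoke Theorem~\ref{theorem:spanning-codegree} as a black box, but this observation does not force a from-scratch reproof: it forces one to notice the above factorisation, which your proposal does not.

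The gap in your plan is step~\textbf{(b)}. ``Greedily embed $T$ minus a short connector piece into $V(H)\setminus A$ along a tight ordering'' cannot on its own produce a near-spanning embedding. Typicality controls degrees and co-degrees into the \emph{whole} vertex set, not into an arbitrary linear- or sublinear-sized remnant: once a $(1-\zeta)$-fraction of $V(H)\setminus A$ has been consumed, the neighbourhood of the image of the current $(k-1)$-shadow edge $e_i\setminus\{v_i\}$ may lie entirely in the used vertices, and nothing in $(\varrho,2,\eps)$-typicality rules this out. You gesture at a ``joint-degree invariant ensuring later vertices can still be placed'', but you never specify it, and that invariant \emph{is} the entire difficulty. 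The paper solves this with an explicit load-balancing mechanism: Proposition~\ref{prop:cut:trees} cuts $T$ into $O(1)$ subtrees each of size at most $\beta n$, Proposition~\ref{proposition:almost-spanning} assigns each subtree to an edge of the (uniformly dense) matching whose clusters still have a surplus of at least $cm$ free vertices, and Lemma~\ref{lem:embedding} embeds the subtree so that the three most-loaded clusters receive the three largest colour classes of that subtree --- maintaining the invariant~\ref{almost:emb:3} that cluster fill levels stay within $cm$ of each other. Without something of this nature, a purely greedy tight-order embedding will generically stall before the tree is complete. A secondary issue: you assert $|A|=o(n)$, but to guarantee $|\Lambda_X(v_1,\dotsc,v_k)\cap\calA|\ge \alpha n$ for every $k$-tuple (as in Lemma~\ref{lemma:finishingembedding}(iii) and Lemma~\ref{lemma:usingchernoff}), the absorber family must have linear size; the paper's absorber subtree $T_{r'}$ has $\Theta(\alpha n)$ edges by~\eqref{alllpha}, not $o(n)$.
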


Ehard and Joos~\cite{EhardJoos2020} showed very general results for finding (almost perfect packings of) spanning bounded-degree hypergraphs in host hypergraphs satisfying certain strong quasirandom conditions.
However, their results are incomparable with ours, as our quasirandomness conditions are much weaker.

We deduce Theorem~\ref{theorem:spanning-typical} from a slightly more general statement (Corollary~\ref{corollary:spanning-quasirandom}).
Weaker notions of quasirandomness and {\it minimum $1$-degree} (defined in analogy to the maximum $1$-degree) of order $\Theta(n^{k-1})$
 are not sufficient to guarantee the existence of any spanning $k$-tree 
  in dense $k$-graphs on $n$ vertices.
This follows from  examples of Ara\'ujo, Piga and Schacht~\cite{AraujoPigaSchacht2020} for $k \ge 3$.
See Section~\ref{section:quasirandom} for more details.

The paper is organised as follows. In Section~\ref{section:notation}, we introduce some notation and terminology that we will use throughout the paper. In Section~\ref{section:lowerbounds}, we prove Proposition~\ref{proposition:lowerbound} and in Section~\ref{section:sketch} we give an overview of the proof of Theorem~\ref{theorem:spanning-codegree} (which can be read independently of Sections~\ref{section:notation} and~\ref{section:lowerbounds}).
In Section~\ref{section:hypertrees}, we prove several results about tight $k$-trees which are higher-uniformity analogues of well-known results for trees.
In Section~\ref{section:tools}, we introduce some tools that will be used in the proof of our main result (Theorem~\ref{theorem:spanning-codegree}).
In Section~\ref{section:connecting}, we prove that $k$-graphs with large codegree are `well-connected', meaning that pairs of disjoint $(k-1)$-sets can be connected by many short walks of fixed length. In Section~\ref{section:embeddinglemma}, we prove an embedding result for $k$-trees into dense $k$-partite $k$-graphs, and in Section~\ref{section:absorption} we introduce a suitable absorption method for $k$-trees with bounded degree. In Section~\ref{section:main-theorem}, we use the results form Sections~\ref{section:hypertrees}-\ref{section:absorption} to prove Theorem~\ref{theorem:spanning-codegree}, and in Section~\ref{section:quasirandom} we extend Theorem~\ref{theorem:spanning-codegree} to hypergraphs satisfying certain quasirandomness conditions, in particular, proving Theorem~\ref{theorem:spanning-typical}. 
Section~\ref{section:remarks} contains some concluding remarks and open questions.

\section{Notation} \label{section:notation}
We introduce here some basic notation used throughout the paper.
More specific notions will be introduced where first needed.
Throughout this section, let $H$ be a $k$-graph, with $k\ge 2$.

\smallskip \noindent {\bf Hypertrees.} As stated in the introduction, a $k$-tree is a $k$-graph which can be defined iteratively as follows: 
\begin{enumerate}[(i)]
	\item  a single $k$-uniform edge is a $k$-tree;
	\item any $k$-graph obtained from a $k$-tree $T$ by adding a new vertex $v$ and a new edge $e$, such that $v\in e$ and $|e\cap e'|=k-1$ for some edge $e'\in E(T)$, is also a $k$-tree. 
\end{enumerate}
By definition, 
every $k$-tree with $n$ vertices has  $n - k + 1\ge 1$ edges, and hence $n\ge k$.
Also by definition,
 every $k$-tree $T$ on $n$ vertices has orderings $e_1, \dotsc, e_{n-k+1}$  and $v_{1}, \dotsc, v_{n}$ of its edges and vertices, respectively, such that $e_1 = \{ v_{1}, \dotsc, v_{k} \}$ and, for all $i \in \{k+1, \dotsc, n \}$,
\begin{enumerate}[\upshape (T1)]
    \item\label{hypertree:leafbyleaf} $\{v_{i}\} = e_{i-k+1}\setminus \bigcup_{1\le j < i-k+1} e_j$, and 
    \item\label{hypertree:joinedcorrectly} there exists $j \in [i-1]$ such that $e_{i-k+1} \setminus \{ v_{i}  \} \subseteq e_j$,  
\end{enumerate}
hold. Any ordering of $E(T)$ or $V(T)$ satisfying properties \ref{hypertree:leafbyleaf} and \ref{hypertree:joinedcorrectly} will be called a \emph{valid ordering}. {Sometimes, while referring to a valid ordering of $E(T)$, we will also understand an ordering of $V(T)$ as implicitly given by the ordering of $E(T)$, except for the ordering of the first $k$ vertices, which can be arbitrary. Similarly, we may refer to a valid ordering of $V(T)$, and then the ordering of $E(T)$ is implicit.}

If~$j\in[i-1]$ is the smallest index for which \ref{hypertree:joinedcorrectly} holds for $e_{i-k+1}$ and $e_{j}$, then we say that $e_j$ is the \emph{parent} of $e_{i-k+1}$ and that $e_{i-k+1}$ a \emph{child} of $e_j$. For $i\ge k$, the \emph{anchor of $v_i$} is $\alpha(v_i):=e_{{i-k+1}} \setminus \{ v_i \}$.

A \emph{$k$-subtree} of $T$ is a $k$-tree $T'$ such that $T' \subseteq T$. For instance, $e_1, \dotsc, e_{r}$ induces a  $k$-subtree of~$T$, for any $1\le r\le n-k+1$. Also, the tree $T-v_{n}$ obtained by removing $v_{n}$ and $e_{n-k+1}$ from $T$ is a $k$-subtree.

\smallskip \noindent {\bf $\ell$-partition}
We say that $H$ is \emph{$\ell$-partite} if there is a partition $\{V_1, \dotsc, V_\ell\}$ of $V(H)$ such that $|e\cap V_i|\le 1$ for every $e\in E(H)$ and $i\in[\ell]$.
It is easy to show (by induction on the number of vertices) that every $k$-tree is $k$-partite and, moreover, the $k$-partition of its vertices is unique.
 

\smallskip \noindent {\bf Homomorphisms and embeddings.}
If $H_1, H_2$ are hypergraphs, a \emph{hypergraph homomorphism of $H_1$ in $H_2$} is a function $\phi: V(H_1) \rightarrow V(H_2)$ that preserves edges.
If furthermore, $H_1\subseteq H'_1$ and $\phi': V(H'_1) \rightarrow V(H_2)$ is a hypergraph homomorphism such that $\phi'$ agrees with $\phi$ restricted to $V(H_1)$, then we call $\phi'$
an \emph{extension of $\phi$ to $H'_1$}.
If $\phi$ is injective, we say that $\phi$ is an \emph{embedding} of $H_1$ into $H_2$. An {extension of $\phi$} is then an extension which is also an embedding.

\smallskip \noindent {\bf Shadows and ordered shadows.} 
The \emph{shadow}  of $H$, denoted  $\partial H$, is the $(k-1)$-graph on vertex set $V(H)$ and whose edges are all the $(k-1)$-sets which are contained in some edge of $H$. The \emph{ordered shadow}  of $H$, denoted  $\partial^\circ H$, is defined as the set of all tuples $\mathbf{v}=(v_1,...,v_{k-1})$ with $\{v_1,...,v_{k-1}\}\in\partial H$, and we will use bold letters to denote its elements. If $\phi: \partial^\circ H \rightarrow \partial^\circ H'$ is a function, $\mathbf a= (a_1, \dotsc, a_{k-1}) \in\partial^\circ H$ and $\mathbf b = (b_1, \dotsc, b_{k-1}) \in\partial^\circ H'$, then $\phi(\mathbf a) = \mathbf b$ means that $\phi(a_i) = b_i$ for all $i \in [k-1]$. Furthermore, elements from $\partial^\circ H $ will be used as their underlying set for set-theoretical operations. For instance, if $\mathbf a=(a_1,\dotsc,a_{k-1}),\mathbf b=(b_1,\dotsc,b_{k-1})\in \partial^\circ H $, then $\mathbf a\cup\mathbf b=\{a_1,\dotsc,a_{k-1}\}\cup\{b_1,\dotsc,b_{k-1}\}$.

\smallskip \noindent {\bf Neighbourhoods and degrees.}
For $S \subseteq V(H)$, the \emph{neighbourhood of $S$ in $H$} is defined as $N_H(S) = \{ F \subseteq V(H) \setminus S : S \cup F \in H \}$.
If $x_1, \dotsc, x_{\ell} \in V(H)$,
we will write $N_H(x_1, \dotsc, x_{\ell})$ instead of $N_H(\{x_1, \dotsc, x_{\ell} \} )$.
The degree of $f\in \partial H$, denoted $\deg_H(f)$, is the number of edges of $H$ containing $f$ and equals $|N_H(f)|$.

\smallskip \noindent {\bf Walks.}
An ordered sequence $(x_1, \dotsc, x_n)$ of vertices from $H$ is a \emph{walk} if every $k$ consecutive vertices  form an edge of $H$.
We will often just write $x_1 \dotsb x_n$ instead of $(x_1, \dotsc, x_n)$ when using walks.
The \emph{length} of a walk is the number of its edges, e.g.~a walk on $n$ vertices has length $n - k + 1$.
The order of the vertices is important: $x_1 x_2 \dotsb x_n$ will generally be a different walk than $x_n \dotsb x_2 x_1$ (even though they use the same vertices and edges).
Note that a walk in which every vertex appears exactly once is a tight path in $H$.
At some points we will use that a walk corresponds naturally to a subgraph of $H$ (instead of a sequence of vertices).

Let $W = x_1 \dotsb x_n$ be a walk in $H$.
The \emph{start of $W$} is denoted by $\sta{W}:=(x_1, \dotsc, x_{k-1})$, and the \emph{end of $W$} is denoted $\ter{W}:=(x_{n-k+1}, \dotsc, x_{n})$. Both $\sta{W}$ and $\ter{W}$ belong to $\partial^o H$.
If $\sta{W}=\mathbf a$ and $\ter{W}=\mathbf b$, we also say that $W$ \emph{goes from $\mathbf a$ to $\mathbf b$}.
The \emph{interior}  of $W$, denoted $\int{W}$, is the set $V(W)\setminus(\sta{W}\cup \ter{W})$.
Thus $|\int{W}| \leq n - 2k - 2$, strict inequality holds if $W$ is not a path.
If $|\int{W}| = q$, we also say that $W$ \emph{has $q$ internal vertices}.
If $\int{W}\cap S=\emptyset$, we call $W$  \emph{internally disjoint} from~$S$.

\smallskip \noindent {\bf Numbers and hierarchies.} Given real numbers $x,y,z$, we write $x=y\pm z$ to denote that $x\in [y-z,y+z]$. Also, we write $a \ll b$ to mean that for $b > 0$, there exists $a_0 > 0$ such that for all $a \leq a_0$ the subsequent statements hold.
Hierarchies with more constants are defined analogously, and should always be read from right to left.
Implicitly, we assume that all constants appearing in a hierarchy are positive, and moreover if $1/m$ appears in a hierarchy then $m$ is an integer.

\section{Extremal example}\label{section:lowerbounds}
In this short section, we prove Proposition~\ref{proposition:lowerbound}.
The construction which witnesses the lower bound is similar in flavour to many other constructions in extremal hypergraph theory, as it is an example of a standard `parity obstruction'. 
We will use the following family of $k$-graphs.

\begin{definition} \label{definition:hab}
For disjoint sets $A,B$, and $0\le i\le k$, let $H_i \coloneqq \{ e \subseteq A \cup B:  |e|=k, |e \cap A| = i \}$,
	and $I \coloneqq \{ i \in \{ 0, \dotsc, k \} : i \not\equiv \lfloor k/2 \rfloor \bmod 2  \}$.
	Define $H(A,B) := \bigcup_{i \in I} H_i$.		
\end{definition}

Assuming that $|A\cup B|\ge k$, note that $\delta_{k-1}(H(A,B)) \ge \min\{ |A|,|B| \} - k + 1$.
The following lemma asserts that there are not too many ways to embed a $k$-tree into $H(A,B)$.
Recall that each $k$-tree admits a unique $k$-partition of its vertices. 

\begin{proposition} \label{proposition:lowerboundtool}
	Let $k, n\in\mathbb N$,  let $H(A,B)$ be as in Definition~\ref{definition:hab}, 
	with $|A \cup B| = n\ge k$.
	Let $T$ be a $k$-tree with $k$-partition $V_1 \cup \dotsb \cup V_k$,
	and an embedding $\phi: V(T) \rightarrow V(H(A,B))$.
	Then, for each $i\in [k]$ either  $\phi(V_i) \subseteq A$ or $\phi(V_i) \subseteq B$.
\end{proposition}

\begin{proof}
	We proceed by induction on $|E(T)|$; the base case $|E(T)| \le 1$ is trivial.
	Let $v$ and $e$ be the last vertex and edge of some valid ordering of $T$, let $e'$ be the parent edge of $e$ and let $v'$ be the vertex in $e'\setminus e$.
	Note that there exists $j \in [k]$ such that $v,v'\in V_j$.
	Applying the induction hypothesis to $T-v$ and to $\phi$ restricted to $V(T')$, we see that we only need to show that $\phi (v)\in A$ if and only if $\phi (v')\in A$.
	This is true, for otherwise, $\big||A\cap e|-|A\cap e'|\big|=1$, which contradicts the definition of $H(A,B)$.
\end{proof}

Now we are ready for the proof of Proposition~\ref{proposition:lowerbound}. 

\begin{proof}[Proof of Proposition~\ref{proposition:lowerbound}]
	Given $T$, together with the unique $k$-partition $\{ V_1, \dotsc, V_k \}$ of $V(T)$,
	choose $a(T)$ as the largest integer such that $a(T)\leq n/2$ and $a(T)\neq | \bigcup_{j \in J} V_j |$ for all $J \subseteq [k]$.
	Since $a(T)$ needs to avoid at most $2^k$ different values, it holds that $a(T) \ge \lfloor n/2\rfloor - 2^k$.
	Set $f(T)=\lfloor n/2\rfloor-a(T) + k-1$.
	
	Let $A$, $B$ be disjoint sets such that $|A|=a(T)$ and $|A\cup B|=n$,
	and consider the $k$-graph $H(A,B)$ as in Definition~\ref{definition:hab}.
	Then  $\delta_{k-1}(H(A,B)) \ge a(T) - k + 1 =\lfloor n/2\rfloor - f(T)$ (by the observation after Definition~\ref{definition:hab}), and $T$ does not embed into $H(A,B)$ because of Proposition~\ref{proposition:lowerboundtool} and by the choice of $a(T)$.
\end{proof}

\section{Overview of the proof of Theorem~\ref{theorem:spanning-codegree}}\label{section:sketch}

Let $H$ be an $n$-vertex $k$-graph with $\delta_{k-1}(H)\ge (1/2+\gamma)n$,  and let $T$ be an $n$-vertex $k$-tree with $\Delta_1(T)\le \Delta$.  We start by partitioning $T$ into three edge-disjoint subgraphs $T_1,T_2,T_3$ such that \begin{enumerate}
	\item $T_1$ and $T_2$ are subtrees of $T$,
	\item $|V(T_1)|\approx \alpha n$ and $|V(T_3)|\approx \nu n$, for some $0<\nu\ll\alpha \ll \gamma$,
	\item $V(T_1)\cap V(T_2)\in\partial T$, and 
	\item $T_2$ is obtained from $T-T_1$ by removing `leaves' one by one. 	
\end{enumerate}
We call $T_2$ the \textit{bulk} of $T$, which is the subgraph containing most vertices from $T$. \\

\noindent \textit{Building the absorbing structures:} We use $T_1$ to build gadgets in $H$ which will allow us extend a partial embedding of $T$ by adding vertices one by one. As we will see in Section~\ref{section:hypertrees}, the \textit{link graph} of a vertex in $T$ is a $(k-1)$-tree with $O(\Delta_1(T))$ vertices (Proposition~\ref{proposition:linkgraph}). Since $|V(T_1)|\approx \alpha n$, there is a $(k-1)$-tree $X$ such that linearly many vertices from $T_1$ have $X$ as its link graph (here is crucial that $\Delta_1(T)=O(1)$). Our gadgets in $H$, called $X$-tuples, consists of a copy $\tilde X$ of $X$ and a special vertex $u^*$ such that $\tilde X$ is contained in the link graph of $u^*$. The idea behind this gadget is that any vertex  whose link graph contains $\tilde X$ can be swapped with $u^*$ in a potential embedding of $T_1$ (see Section~\ref{section:absorption} for details). 

Using the large codegree in $H$, we can embed $T_1$ while covering a set of $\delta n$ disjoint $X$-tuples, with $\nu\ll \delta\ll \alpha $, which will be possible since $T_1$ contains linearly many vertices with $X$ as its link graph. Each $X$-tuple will be capable to `absorb' one arbitrary vertex at the time, and so, in total, this family will be able to absorb one by one any sequence of $\delta n$ vertices.\\

\noindent \textit{Decomposing the bulk of $T$:} In this step, we decompose $T_2$ into a constant number of smaller subtrees in a similar way as it has been done  for trees in graphs~\cite{AKS95}.   This is accomplished in Section~\ref{section:hypertrees}, where, in particular, we discuss {\it rooting} a $k$-tree at a $(k-1)$-set of its vertices and also develop the notion of \emph{layerings} of hypertrees, which resemble BFS-layerings of  rooted graphs.
Using these notions, we show (\cref{lemma:cut:trees}) that for any $\beta > 0$ one can decompose $T_2=D_1\cup\dotsb\cup D_p$, with $p=O(\beta^{-1})$, so that each $D_i$ is a $k$-tree of size $O(\beta n)$. Moreover, these parts are edge-disjoint and $V(D_i)\cap V(D_j)$ is either empty or is an element of $\partial T_2$. \\

\noindent \textit{Embedding of $T_2$:} The parts $D_1,\dotsc,D_p$ can be ordered and each of them can be rooted so that the first $\ell$ parts, for any $\ell\le p$, form a subtree of $T_2$ containing the `root' of part $\ell+1$. We will then
embed the parts successively, embedding  in each step one part (except its root, which is already embedded). Each $D_i$ will be embedded  into a suitable part of the host hypergraph using the regularity method. Fortunately, the  {\it weak regularity lemma for hypergraphs} (Theorem~\ref{theorem:weakregularity}) is sufficient for our purposes here, which simplifies the technical details of the proof and also gives better bounds for $n_0$. We only use this lemma in order to find an almost perfect matching $\mathcal M$ in the corresponding \textit{reduced graph}, which is a vertex disjoint collection of dense `regular $k$-tuples' covering most of $H$.  

Suppose we are about to embed the part $D_i$ which has its root already embedded. We first find an edge of $\mathcal M$ with sufficient free space, which spans a dense $k$-partite $k$-graph $F_i$ in $H$ where we will embed most of $D_i$. That is, we embed all but the first few layers of $D_i$ into $F_i$, because these layers will be needed to make the connection between the already embedded root of $D_i$ and $F_i$. It will be crucial here the bound on $\Delta_1(T)$. This will ensure that the number of vertices in the first few layers of $D_i$ is small and so most of $D_i$ is embedded into $F_i$. 

In order to connect the root of $D_i$ with $F_i$, we will use a part of $H$ that we have separated earlier, before applying regularity, and that we will only use for the connections.
This is the {\it reservoir},  a very small set $R\subseteq V(H)$ having (amongst others) the property  that every $(k-1)$-set has many neighbours in $R$. The reservoir is found  using a standard probabilistic argument (Lemma~\ref{lem:reservoir}).
A {\it connecting lemma} (Lemma~\ref{lemma:strengthenedconnectinglemma}) allows us to find many short walks between arbitrary pairs of ordered $(k-1)$-sets, whose internal vertices are all  inside the reservoir, and an enhanced version of this  lemma (Lemma~\ref{lem:embedding}) allows us to embed not only walks or paths, but instead bounded-size $k$-trees of bounded degree into the reservoir, joining given pairs of $(k-1)$-sets. This is what we need to finish the embedding described in the previous paragraph.\\

\noindent\textit{Absorption:} Recall that $T_2$ was obtained from $T-T_1$ by removing leaves one by one, which implies that $T_3$ is spanned by the last $\nu n$ vertices in a valid ordering of $T-T_1$. 

In order to embed $T_3$, we will use the collection of $X$-tuples we covered at the beginning of the proof, which is capable to absorb any sequence of $\delta n$ vertices. Since $T_3$ is formed by a sequence of  $\nu n\ll \delta n$ vertices, we can incorporate the vertices of $T_3$ one by one, using one $X$-tuple at the time, and thus finishing the embedding of $T$.
\section{Hypertrees}\label{section:hypertrees}

In this section, we establish some structural results about hypertrees.
Most importantly, we show any large hypertree can be decomposed into smaller hypertrees
(see \Cref{lemma:cut:trees}).

\subsection{Link graph of a $k$-tree} 
For a $k$-graph $H$ and $v \in V(H)$, the \emph{restricted link graph of $v$ with respect to $H$}, denoted $H(v)$, is the $(k-1)$-graph whose vertex set is $\bigcup\{e\setminus \{v\}:v\in e\}$ and its edge set is $\{e\setminus\{v\}:v\in e\}$.

\begin{proposition} \label{proposition:linkgraph}
	Let $k\ge 2$, let $T$ be a $k$-tree, and let $v \in V(T)$. Then $T(v)$ is a $(k-1)$-tree on at most $\Delta_1(T) + k - 1$ vertices.
\end{proposition}

\begin{proof}
	Let $e_1, \dotsc, e_m$ be a valid ordering of the edges of $T$, and let $I = \{ i \in [m] : v \in e_i \}$.
	Then $E(T(v))=\{ e_i \setminus \{v\} : i \in I \}$, and $I$ induces a valid ordering of $E(T(v))$, with $\alpha(v)$ being the first edge.
	So $T(v)$ is a $(k-1)$-tree.
	Since $v$ belongs to at most $\Delta_1(T)$ edges in $T$,
	we know that $T(v)$ has at most $\Delta_1(T)$ edges, and thus at most $\Delta_1(T) + k - 1$ vertices.
\end{proof}

\subsection{Layerings}
It is often convenient to root a $2$-tree $T$  at some vertex $r\in V(T)$, which gives rise to a rooted tree $(T,r)$.
Then one can define the \emph{$i$-th layer $L_i$ of $(T,r)$} as the set of all vertices at distance exactly $i$ from $r$ in $T$.
The layers partition $V(T)$, and any vertex in layer $i+1$ is joined to some vertex in layer $i$.

We now introduce a generalisation of these notions to higher uniformities.

\begin{definition} \label{definition:rooted}
	A \emph{rooted $k$-tree} is a pair $(T,\mathbf x)$ where $T$ is a $k$-tree and $\mathbf x\in \partial^\circ T$.
\end{definition}

\begin{definition}[Layering]\label{def:layering}
	Let $(T,\mathbf x)$ be a rooted $k$-tree {with $\mathbf x=(x_1,\dotsc,x_{k-1})$}.
	A \emph{layering} for $(T,\mathbf x)$ is a tuple $\mathcal L=(L_1,\dotsc,L_m)$, for some $m\in\mathbb N$, such that $\{L_1, \dotsc, L_{m} \}$ is a partition of $V(T)$, and
	\begin{enumerate}[(L1)]
		\item {$\mathbf x \cap L_i=\{x_i\}$ for all $i \in [k-1]$,
		\label{item:flatten-root}
		and $L_1 = \{x_1\}$,}
		\label{item:flatten-initial}
		\item for each  $v \in L_{i+1}$ with $1\le i< m$ there are $w \in L_{i}$, $e \in E(T)$  such that $\{v, w\} \subseteq e$, and
		\label{item:flatten-degree}
		\item for each $e \in E(T)$, there is $j\in[m]$ such that $|e \cap L_{i}| = 1$ for each $j\le i <j+k$.
		\label{item:flatten-correct}
	\end{enumerate}
	We call the tuple $(T,\mathbf x,\mathcal{L})$ a \emph{layered} $k$-tree.
\end{definition}

Note that a layering $(L_1, \dotsc, L_m)$ of $(T,\mathbf x)$ 
is the preimage of the tight path on~$m$ vertices under a homomorphism
that maps all of $L_i$ to the $i$-th vertex of the tight path. 

  \begin{figure}[h!]
	\centering
\begin{tikzpicture}[thick, scale=1]
	
	\draw (0,5.4) node {$v_1$};
	\draw (-1.3,4.65) node {$v_2$};
	\draw (1,4.2) node {$v_3$};
	\draw (0.05,2.25) node {$v_4$};
	\draw (-1.9,3.55) node {$v_5$};
	\draw (-3.5,3.5) node {$v_6$};
	\draw (-3.25,1.95) node {$v_7$};
	\draw (-1,1.6) node {$v_8$};
	\draw (-1.55,1) node {$v_9$};
	\draw (2.55,3) node {$v_{10}$};
	\draw (2.45,1.8) node {$v_{11}$};
	\draw (2.45,4.5) node {$v_{12}$};
	
	\draw (-3,5) node {$T$};
	\draw (5,5) node {$\mathcal L$};
	
	\draw (5,3) node {$v_1$};
	\draw (5.9,3.1) node {$v_2$};
	\draw (7,3) node {$v_3$};
	\draw (8,3) node {$v_4$};
	\draw (7,3.5) node {$v_5$};
	\draw (8,3.5) node {$v_6$};
	\draw (9,3) node {$v_7$};
	\draw (9,3.5) node {$v_8$};
	\draw (8,4) node {$v_9$};
	\draw (9,4) node {$v_{10}$};
	\draw (10,3) node {$v_{11}$};
	\draw (6,3.5) node {$v_{12}$};
	
	\draw (5,2) node {$L_1$};
	\draw (6,2) node {$L_2$};
	\draw (7,2) node {$L_3$};
	\draw (8,2) node {$L_4$};
	\draw (9,2) node {$L_5$};
	\draw (10,2) node {$L_6$};
	
	\draw (4.5,2.5) -- (10.5,2.5);
	\draw (5.5,1.65) -- (5.5,4.4);
	\draw (6.5,1.65) -- (6.5,4.4);
	\draw (7.5,1.65) -- (7.5,4.4);
	\draw (8.5,1.65) -- (8.5,4.4);
	\draw (9.5,1.65) -- (9.5,4.4);

	
	\coordinate (v1) at (0,5);
	\coordinate (v2) at (-1,4.3);
	\coordinate (v3) at (0.7,3.9);
	\coordinate (v4) at (0,2.8);
	\coordinate (v5) at (-1.7,3.1);
	\coordinate (v6) at (-3,3.5);
	\coordinate (v7) at (-2.8,2);
	\coordinate (v8) at (-1,2);
	\coordinate (v9) at (-2,1);
	\coordinate (v10) at (2,3);
	\coordinate (v11) at (1.9,1.9);
	\coordinate (v12) at (1.9,4.5);	
	
	\tikzstyle{every node}=[circle, draw, fill, inner sep=0pt, minimum width=2pt]
	\draw (v1) node {};
	\draw (v2) node {};
	\draw (v3) node {};
	\draw (v4) node {};
	\draw (v5) node {};
	\draw (v6) node {};
	\draw (v7) node {};
	\draw (v8) node {};
	\draw (v9) node {};
	\draw (v10) node {};
	\draw (v11) node {};
	\draw (v12) node {};
	
	\triple{(v2)}{(v1)}{(v3)}{6pt}{1pt}{DarkDesaturatedBlue,opacity=0.8}{VerySoftBlue,opacity=0.2};
	\triple{(v2)}{(v3)}{(v4)}{6pt}{1pt}{DarkDesaturatedBlue,opacity=0.8}{VerySoftBlue,opacity=0.2};
	\triple{(v2)}{(v4)}{(v5)}{6pt}{1pt}{DarkDesaturatedBlue,opacity=0.8}{VerySoftBlue,opacity=0.2};
	\triple{(v2)}{(v5)}{(v6)}{6pt}{1pt}{DarkDesaturatedBlue,opacity=0.8}{VerySoftBlue,opacity=0.2};
	\triple{(v6)}{(v5)}{(v7)}{6pt}{1pt}{DarkDesaturatedBlue,opacity=0.8}{VerySoftBlue,opacity=0.2};
	\triple{(v5)}{(v4)}{(v8)}{6pt}{1pt}{DarkDesaturatedBlue,opacity=0.8}{VerySoftBlue,opacity=0.2};
	\triple{(v7)}{(v5)}{(v9)}{6pt}{1pt}{DarkDesaturatedBlue,opacity=0.8}{VerySoftBlue,opacity=0.2};
	\triple{(v4)}{(v3)}{(v10)}{6pt}{1pt}{DarkDesaturatedBlue,opacity=0.8}{VerySoftBlue,opacity=0.2};
	\triple{(v4)}{(v10)}{(v11)}{6pt}{1pt}{DarkDesaturatedBlue,opacity=0.8}{VerySoftBlue,opacity=0.2};
	\triple{(v3)}{(v1)}{(v12)}{6pt}{1pt}{DarkDesaturatedBlue,opacity=0.8}{VerySoftBlue,opacity=0.2};
	
\end{tikzpicture}
	\caption{On the left, a $3$-tree $T$ with valid ordering $v_1, \dotsc, v_{12}$.
		On the right, a table shows the layering $\mathcal L=(L_1,\dotsc, L_6)$ of $T$ when it is rooted at $\mathbf x=(v_1,v_2)$.} 
\end{figure}

\begin{lemma}\label{lemma:flattening}
	Every rooted $k$-tree $(T,\mathbf x)$ has a unique layering.
\end{lemma}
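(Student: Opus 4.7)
The plan is to induct on $|V(T)|$. In the base case $|V(T)|=k$, the tree $T$ is a single edge; placing each of its vertices into its own singleton layer, with the vertices of $r$ in the first $k-1$ layers in some chosen order and the remaining vertex in $L_k$, gives a layering directly.

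For the inductive step, the key preparatory move is re-rooting. Since $r\in\partial(T)$, some edge $e^*\in E(T)$ contains $r$, and I would first establish that $E(T)$ admits a valid ordering with $e^*$ as the first edge. This reduces to the combinatorial fact that every $k$-tree with at least two edges has at least two leaf vertices lying in distinct edges: if two leaves shared a common unique edge, examining any valid ordering would force one of them to appear as an ``old'' vertex of that edge, a contradiction. One then removes a leaf outside $e^*$, applies induction, and appends the removed edge at the end. With such an ordering $v_1,\dotsc,v_n$ in which $r=\{v_1,\dotsc,v_{k-1}\}$ in hand, set $v^*:=v_n$; the hypertree $T-v^*$ is a $k$-tree on $n-1$ vertices still rooted at $r$, so the inductive hypothesis yields a layering $\mathcal{L}'=(L'_1,\dotsc,L'_{m'})$.

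To extend $\mathcal{L}'$, I would pick a parent edge $e_p$ of the last edge $e_{n-k+1}$, so that $\alpha(v^*)\subseteq e_p$. Condition~\ref{item:flatten-correct} applied to $e_p$ in $\mathcal{L}'$ shows that its vertices occupy $k$ consecutive layers $\{a,a+1,\dotsc,a+k-1\}$, and those of $\alpha(v^*)$ miss exactly one of these values, say $c$. I would then set
\[
\ell(v^*)=\begin{cases} c & \text{if } a+1\le c\le a+k-2,\\ a+k & \text{if } c=a,\\ a+k-1 & \text{if } c=a+k-1,\end{cases}
\]
and obtain $\mathcal{L}$ from $\mathcal{L}'$ by adjoining $v^*$ to $L_{\ell(v^*)}$, opening a new layer $L_{m'+1}$ only when $\ell(v^*)=m'+1$, which can occur only in the second case. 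A short check verifies that this recipe is independent of the choice of parent edge when $\alpha(v^*)$ is contained in several.

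The verification of \ref{item:flatten-root}--\ref{item:flatten-correct} is then a direct case analysis. For \ref{item:flatten-correct} on the new edge $e_{n-k+1}$, its label multiset becomes $k$ consecutive integers each occurring once (filling an interior gap, or extending by one at either end), and the other edges are unaffected because $v^*$ is not in them. For \ref{item:flatten-degree} on $v^*$, the three cases have been designed so that $\ell(v^*)-1$ lies in the label set of $\alpha(v^*)$, providing the required witness via $e_{n-k+1}$. For \ref{item:flatten-root}, in each case $\ell(v^*)\ge 2$ because $a\ge 1$ and $k\ge 2$, and $v^*\notin r$, so $|L_1|=1$ and $|r\cap L_i|=1$ are preserved. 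The main obstacle I anticipate is the re-rooting step; once it is in place, the boundary constraint $a=1$ is precisely what rules out the symmetric alternatives $\ell(v^*)=a$ in Case B and $\ell(v^*)=a-1$ in Case C, both of which would push $v^*$ into $L_1$ or leave no neighbour in the previous layer.
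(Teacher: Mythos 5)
Your approach is a genuine variation of the paper's argument, and where it overlaps, it is correct. Both proofs proceed by removing the last vertex of a valid ordering and extending a layering of the smaller tree, and your extension rule (add $v^*$ to $L_c$ when $\alpha(v^*)$ misses an interior or final layer of $e_p$, and to $L_{a+k}$ when it misses the first) is exactly the extension the paper performs in its case $v\notin r$, down to the case split. Your verification of \ref{item:flatten-initial}--\ref{item:flatten-correct} for this extension is correct, and your observation that the rule is independent of the choice of parent edge is a nice (though optional) addition. Where you diverge is the handling of the root: the paper allows the removed leaf $v$ to lie in $r$, and in that case re-roots $T-v$ at $r'=(r\setminus\{v\})\cup\{w\}$ and patches the layering; you instead pre-process by re-ordering $E(T)$ so that the edge $e^*\supseteq r$ is first, which forces the last vertex outside $r$ and eliminates that case entirely. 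That is a legitimate and arguably cleaner route, since the paper's patch (adding $v$ to $L'_1$ and relocating $w$ to $L'_k$) is quite delicate.

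The one genuine gap is in your justification of the re-rooting step. You reduce it to the assertion that a $k$-tree with at least two edges has two leaf vertices in distinct edges, but the argument you sketch only establishes half of this: that no single edge contains two degree-one vertices (your ``old vertex'' argument, which also needs the separate observation that the first edge $e_1$ has $k-1$ of its vertices in $e_2$, so at most one leaf). It does not show that there are at least two degree-one vertices in the first place; for $k\ge 3$ the naive degree-sum count no longer forces this, so a real argument is required. One can close the gap, for instance, by induction: remove the last vertex $v_n$ (a leaf) and the last edge $e_m$; by induction $T-v_n$ has two leaves in distinct edges, and since the parent $e_p$ of $e_m$ contains $e_m\setminus\{v_n\}$ and has at most one leaf, at least one leaf of $T-v_n$ avoids $e_m$ and remains a leaf of $T$, giving two leaves of $T$ in distinct edges. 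Alternatively, one can apply induction directly to the statement ``for any $e^*\in E(T)$ there is a leaf outside $e^*$'': if the last vertex $v_n$ lies in $e^*$ then $e^*=e_m$, and applying the inductive hypothesis to $T-v_n$ with the parent edge $e_p\supseteq\alpha(v_n)$ produces a leaf $u\notin e_p$, hence $u\notin\alpha(v_n)\cup\{v_n\}=e^*$. Either way, once the re-rooting lemma is in place, the rest of your argument goes through.

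A minor point: your closing remark that the boundary constraint $a=1$ is what rules out the ``symmetric'' alternatives $\ell(v^*)=a$ and $\ell(v^*)=a-1$ is not quite right; those alternatives would violate \ref{item:flatten-degree} for $v^*$ regardless of the value of $a$, since $v^*$ lies only in $e_{n-k+1}$ and $\alpha(v^*)$ has no vertex in layer $\ell(v^*)-1$ in either case. This does not affect the correctness of your chosen rule.
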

\begin{proof}The proof is by induction on $|E(T)|$. Let us note that if $T$ is a single edge, then the first $k-1$ levels of the layering of $(T,\mathbf x)$ correspond to $\mathbf x$, and the last level corresponds to the unique vertex in $T-\mathbf x$. So, we assume that $|E(T)|\ge 2$ and, in some valid ordering of $V(T)$, let $v$ and $e$ be the last vertex and last edge, respectively.
Let $e'$ be the parent of $e$, and let $w$ be the only vertex in $e'\setminus e$. 

We shall argue first about the existence of the layering for $(T, \mathbf{x})$, we will argue about uniqueness later.
Let $\mathbf x =(x_1,\dotsc,x_{k-1})$.
We consider three cases.
Suppose first that $v\notin \mathbf x$.
Let $\mathbf{x}' = \mathbf{x}$.
By induction, $(T-v,\mathbf x')$ has a unique layering $\mathcal L'=(L'_1,\dotsc,L'_m)$ which we can extend to a layering of $(T,\mathbf x)$ by either adding $v$ to the layer $L'_i$ containing $w$, or (if all other vertices of $e'$ lie in later layers than $w$) by adding $v$ to  $L'_{i+k}$.
Next, suppose $v=x_j$ for some $j\in [k-1] \setminus \{1\}$.
Then we set $\mathbf x'=(x_1',\dotsc,x_{k-1}')\in \partial^\circ T$, where $x'_\ell =x_\ell$ for $\ell\not =j$ and $x'_j=w$.
By induction, $(T-v,\mathbf x')$ has a unique layering $(L'_1, \dotsc, L'_m)$.
We extend this layering to a layering of $(T, \mathbf x)$ by adding $v$ to the layer that hosts $w$.
It is easy to check that~\ref{item:flatten-initial}--\ref{item:flatten-correct} hold for our layering of $T$.
Finally, suppose $v =x_1$.
In this case, set $\mathbf{x}' = (x_2, x_3, \dotsc, x_{k-1}, w)$.
Again, by induction $(T-v, \mathbf{x}')$ has a unique layering $(L'_1, \dotsc, L'_m)$.
We set $L_1=\{v\}$ and $L_i = L'_{i-1}$ for all $2 \leq i \leq m+1$.
Again,~\ref{item:flatten-initial}--\ref{item:flatten-correct} hold for $(L_1, \dotsc, L_{m+1})$.

It is also straightforward to check that, in all cases, the obtained layering $\mathcal L$ must be unique, as the layering obtained from $\mathcal{L}$ by removing $v$ will yield a layering of $(T - v, \mathbf{x}')$, which is must be unique by induction. 
\end{proof}

Note that Definition~\ref{def:layering}~\ref{item:flatten-degree} 
gives that $|L_{i+1}| \leq \Delta_1(T) |L_i|$ for all $i \in [m-1]$.
So, by~\ref{item:flatten-initial} we have the following easy observation.

\begin{proposition} \label{proposition:boundedclusters}
	Let $(T,\mathbf x, \mathcal{L})$ be a layered $k$-tree with $\mathcal{L}=(L_1, \dotsc, L_m )$.
	Then, $|L_i| \leq (\Delta_1(T))^{i-1}$ for all $i \in [m]$. \hfill $\qed$
\end{proposition}

Recall that $k$-trees are $k$-partite, and that each $k$-tree $T$ admits a unique $k$-partition $\{V_1, \dotsc, V_k\}$ of $V(T)$.
Given the layering $\mathcal L=(L_1, \dotsc, L_m)$ of $(T,\mathbf x)$, it is clear from  Definition~\ref{def:layering}~\ref{item:flatten-correct} that (after relabelling the partition classes) each $V_i$ contains all layers $L_{i+k\mathbb N}$.
We use this to deduce that the sizes of the partition classes of a $k$-tree cannot differ too much, as detailed in the following lemma.

\begin{proposition} \label{proposition:boundedsizecolourclasses}
	Let $\Delta, k\ge 2$ and let $T$ be a $k$-tree with $k$-partition $\{V_1, \dotsc, V_k \}$ and with
	$\Delta_1(T) \leq \Delta$.
	Then $|V_i| \leq \Delta^{k-1}(1+|V_j|)$ for each $i, j \in [k]$.
\end{proposition}

\begin{proof}
	Let $\mathcal{L} = (L_1, \dotsc, L_m)$ be a layering of $T$.
	By~\ref{item:flatten-initial} and~\ref{item:flatten-degree} we have that $|L_1| = 1$ and $|L_{i+1}| \leq \Delta_1(T) |L_i|\leq \Delta |L_i|$ for all $i \in [m-1]$.
	We can assume that $V_1, \dotsc, V_k$ are  so that for each $i \in [k]$, the set $V_i$ contains precisely the layers $\{ L_{i + jk} : j \geq 0 \}$.
	
	So, for $1\le i \leq k$, we have $|V_i| = \sum_{j \geq 0} |L_{i+jk}| \leq \sum_{j \geq 0} \Delta |L_{i-1+jk}| = \Delta |V_{i-1}|$.
	Secondly, note that $|V_1| = \sum_{j \geq 0} |L_{1+jk}| = |L_1| + \sum_{j \geq 1} |L_{1+jk}| \leq 1 + \sum_{j \geq 1} \Delta |L_{jk}| = 1 + \Delta |V_k|$.
	The desired bound follows by applying these inequalities repeatedly.
\end{proof}

\subsection{Pseudopaths in $k$-trees}
A basic fact about $2$-trees is that every two vertices are joined by a unique path.
We now introduce pseudopaths, which play a similar role in hypertrees.

\begin{definition}[Pseudopath]
A $k$-tree $P$ is a \emph{pseudopath} (of uniformity $k$) if there exists a valid ordering $e_1, \dotsc, e_t$ of $E(P)$ such that for every $i < t$, the only child of edge $e_i$ is $e_{i+1}$, in which case we say that $e_1,\dotsc, e_t$ is a \textit{path-ordering} for $P$.\end{definition}

\begin{figure}[h!]
	\centering
	\begin{tikzpicture}[thick, scale=.7]
		\draw (0,.5) node {$v_0$};
		\draw (180:2.6) node {$v_1$};
		\draw (210:2.6) node {$v_2$};
		\draw (240:2.6) node {$v_3$};
		\draw (270:2.6) node {$v_4$};
		\draw (300:2.6) node {$v_5$};
		\draw (330:2.6) node {$v_6$};
		\draw (360:2.6) node {$v_7$};		
		\tikzstyle{every node}=[circle, draw, fill, inner sep=0pt, minimum width=2pt]
		\draw (0,0) node {};
		\draw (180:2) node {};
		\draw (210:2) node {};
		\triple{(0,0)}{(210:2)}{(180:2)}{6pt}{1pt}{DarkDesaturatedBlue,opacity=0.8}{VerySoftBlue,opacity=0.2};
		\draw (240:2) node {};
		\triple{(0,0)}{(240:2)}{(210:2)}{6pt}{1pt}{DarkDesaturatedBlue,opacity=0.8}{VerySoftBlue,opacity=0.2};
		\draw (270:2) node {};
		\triple{(0,0)}{(270:2)}{(240:2)}{6pt}{1pt}{DarkDesaturatedBlue,opacity=0.8}{VerySoftBlue,opacity=0.2};
		\draw (300:2) node {};
		\triple{(0,0)}{(300:2)}{(270:2)}{6pt}{1pt}{DarkDesaturatedBlue,opacity=0.8}{VerySoftBlue,opacity=0.2};
		\draw (330:2) node {};
		\triple{(0,0)}{(330:2)}{(300:2)}{6pt}{1pt}{DarkDesaturatedBlue,opacity=0.8}{VerySoftBlue,opacity=0.2};
		\draw (0:2) node {};
		\triple{(0,0)}{(360:2)}{(330:2)}{6pt}{1pt}{DarkDesaturatedBlue,opacity=0.8}{VerySoftBlue,opacity=0.2};
	\end{tikzpicture}
	\caption{The $3$-graph $F_{3,7}$.}
	\label{figure:fan}
\end{figure}
Observe that a pseudopath $P$ can have many different valid orderings of its edges which make it a $k$-tree, but not necessarily all valid orderings will be path-orderings.
As an example, consider the $k$-tree $F_{k,t}$ (see Figure~\ref{figure:fan}) with vertex set $\{v_0,v_1,\dotsc, v_t\}$ and edges $e_i=\{v_0,v_i,\dotsc, v_{i+k-2}\}$, for $1\le i\le t-k+2$.
We easily see that $e_1,\dotsc, e_{t-k+2}$ is a path-ordering for $F_{k,t}$.
However, rooting $F_{k,t}$ at any $(k-1)$-set $f\subseteq e_i$, for $2\le i\le t-k-3$, gives a valid ordering which is not a path-ordering.

\begin{definition}Given a $k$-graph $H$ and distinct $f,f' \in \partial H$, an \emph{$(f,f')$-pseudopath} in $H$ is a pseudopath $P\subseteq H$ with a path-ordering $e_1, \dotsc, e_t$ such that $f \subseteq e_i$ and $f' \subseteq e_j$ if and only if $(i,j)=(1,t)$. \end{definition}

Observe that if an $(f,f')$-pseudopath $P$ has at least two edges, then exactly two vertices of $V(P)$ have degree $1$,
and these vertices are contained in $f$ and $f'$ respectively.
We use this observation to show existence and uniqueness of pseudopaths in hypertrees.

\begin{lemma}
	Let $T$ be a $k$-tree. Then, for any distinct $f,f' \in \partial T$ there is a unique $(f,f')$-pseudopath in~$T$. 
\end{lemma}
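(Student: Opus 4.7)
The plan is to induct on the number of edges of $T$. The base case $|E(T)|=1$ is immediate: the lone edge $e$ contains both $f$ and $f'$ as $(k-1)$-subsets, and $T$ itself is the unique $(f,f')$-pseudopath.

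For the inductive step, I would peel off a leaf. Take $v$ to be the last vertex in some valid ordering of $T$; then $v$ has degree exactly $1$ in $T$, is contained in a unique edge $e$, and $T' := T - v$ is a $k$-tree. The guiding observation is that any pseudopath is a tight path, and in a tight path of length at least $2$ the only degree-$1$ vertices are the two endpoints (while in a pseudopath with a single edge all vertices have degree $1$). Hence if $v$ lies in an $(f,f')$-pseudopath $R$, either $R=\{e\}$ or $v$ sits at an endpoint of $R$, which forces $e$ to be the first or last edge of $R$ and $v$ to belong to $f$ or $f'$.

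I would then split into four cases according to whether $v$ lies in $f$, in $f'$, in both, or in neither. The cases $v \notin f \cup f'$ and $v \in f \cap f'$ resolve quickly: the former reduces directly to $T'$ by induction (any candidate in $T$ must avoid $v$, else it would force $v \in f \cup f'$ or $f = f' = e \setminus \{v\}$), and the latter makes $\{e\}$ the only option (a longer pseudopath cannot have $v$ at both endpoints). The main case is $v \in f \setminus f'$ (with $v \in f' \setminus f$ symmetric). Setting $g := e \setminus \{v\} \in \partial T'$, if $g = f'$ then $\{e\}$ is again the unique pseudopath; otherwise the inductive hypothesis applied to $T'$ with the pair $(g, f')$ produces a unique pseudopath $Q$, and prepending $e$ to $Q$ should give the desired $(f,f')$-pseudopath in $T$.

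The main point to verify is that the concatenation of $e$ with $Q$ really is a pseudopath (rather than just a tight walk) and that no other $(f,f')$-pseudopath in $T$ arises. For the first, the defining property of $Q$ guarantees that $g$ is contained only in its first edge, so no later edge of $Q$ shares the face $g$ with $e$; hence in the resulting valid ordering the edge $e$ parents only the first edge of $Q$, and each subsequent edge inherits its unique parent from $Q$. For uniqueness, any $(f,f')$-pseudopath $R$ in $T$ has to start with $e$ by the guiding observation, so $R - v$ is a $(g,f')$-pseudopath in $T'$, and the inductive hypothesis forces $R - v = Q$.
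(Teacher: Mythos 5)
Your proof follows essentially the same inductive strategy as the paper (peel off the last vertex $v$ of a valid ordering, case on whether $v$ lies in $f$, $f'$, both, or neither, and for the main case pass to $T-v$ with root pair $(e\setminus\{v\},\,f')$, prepending $e$ to the resulting pseudopath). You are also more careful than the paper in one respect: you notice the degenerate subcase $g=f'$ in which the induction hypothesis would be applied to a non-distinct pair, and you dispose of it separately by observing that $\{e\}$ is the answer.

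However, there is one genuine mathematical error in the stated justification. You write that ``any pseudopath is a tight path,'' and you derive from this the fact that a pseudopath with at least two edges has exactly two degree-$1$ vertices, located at its ends. The fact you derive is true and is exactly what the paper records (without proof) as an observation right after the definition of pseudopath; but the claim you use to justify it is false. For instance, with $k=3$ the edges $\{1,2,3\}$, $\{2,3,4\}$, $\{2,4,5\}$, $\{2,5,6\}$ (in this order) form a pseudopath --- each edge is the unique child of its predecessor --- yet vertex $2$ has degree $4$, which exceeds the maximum degree $k=3$ of any tight $3$-path, so this pseudopath is not a tight path under any relabelling. The correct reason why only the two end vertices of a longer pseudopath have degree $1$ is the structure imposed by the ``unique child'' condition: the unique vertex of $e_1\setminus e_2$ never reappears in any $e_j$ for $j\ge 2$ (since $e_j\subseteq e_{j-1}\cup\{v_{\text{new}}\}$ inductively), and symmetrically the new vertex of $e_t$ appears only in $e_t$, while every other vertex lies in at least two consecutive edges. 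If you substitute this direct argument (or simply record the degree-$1$ statement as an observation to be checked by the reader, as the paper does), the remainder of your proof is correct and matches the paper's.
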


\begin{proof}
    We use induction on $|E(T)|$.
    If $|E(T)|\le 1$, the statement clearly holds.
    Otherwise, let~$v$ and $e$ be the last vertex and last edge in a valid ordering of $V(T)$. If $v \notin f\cup f'$, then by induction, the tree $T-v$ contains a unique $(f,f')$-pseudopath $P$.
    This path remains unique in~$T$.
    Indeed, note that in any $(f,f')$-pseudopath $P'$ in $T$ with at least two edges the only vertices with degree $1$ in $P'$ must be included in the first or last edge, and included in $f \cup f'$.
    Since $v$ has degree $1$ in $T$, if there were a $(f,f')$-pseudopath $P'$ in $T$ with at least two edges including $v$, this would imply that $v$ is in the first or last edge of $P'$, and also $v \in f \cup f'$, a contradiction.
    It also cannot happen that $P'$ is an $(f,f')$-pseudopath consisting of a single edge and including~$v$, since then the edge is equal to $f \cup f'$ and again it would imply that $v \in f \cup f'$.

    So assume $v\in f\cup f'$.
    If $v\in f\cap f'$, then $f, f'\subseteq e$, and therefore, $e$ is a $(f,f')$-pseudopath, and it is unique.
    We can thus suppose that $v\in f\setminus f'$, which implies $f\subseteq e$.
    By induction, $T-v$ contains a unique $(e\setminus \{v\},f')$-pseudopath $P'$, which can be extended to an $(f,f')$-pseudopath $P$ by adding $e$.
    Since $v$ has degree~$1$ in $T$, any $(f,f')$-pseudopath in $T$ contains $e$.
    So, as~$P'$ was unique, $P$ is unique too.
\end{proof}

A set $f$ of $k-1$ vertices is said to {\it lie on} a pseudopath~$P$, if either $P$ is an $(f,f')$-pseudopath for some $f'$, or $f$ is contained in exactly two of the edges of $P$.

\begin{definition}[Distance] \label{definition:distance}
	Given a $k$-tree $T$ and distinct $f,f'\in \partial T$, the \emph{distance} $d_T(f,f')$ between $f$ and $f'$ is  the number of edges in the unique $(f,f')$-pseudopath connecting $f$ with $f'$.
	If $f = f'$, we let $d_T(f,f') = 0$.
\end{definition}

Note that $d_T(f,f')\ge 1$ for all distinct $f,f'\in\partial T$, with equality if and only if $f\cup f'\in E(T)$.
Given tuples $\mathbf x,\mathbf y\in\partial^\circ T$ and $f\in\partial T$, we write $d_T(\mathbf x,\mathbf y)$ for the distance between the underlying $(k-1)$-sets of $\mathbf x$ and $\mathbf y$, and let $d_T(\mathbf x,f)$ denote the distance between $f$ and the underlying set of~$\mathbf x$.

\begin{lemma} \label{lemma:layeringpseudopath}
	Let $P$ be an $(f,f')$-pseudopath of uniformity $k$ with a path-ordering $e_1, \dotsc, e_t$.
	Let $\mathbf{x} \in \partial^\circ P$ be such that $\mathbf{x} \subseteq e_1$,
	and let $\mathcal{L} = (L_1, \dotsc, L_m)$ be the unique layering of $(P, \mathbf{x})$.
	Then, setting $r(j) = \min \{ i : L_i \cap e_j \neq \emptyset \}$ for $j=1,\dots,t$, we have
	\begin{enumerate}
		\item \label{item:layerpseudo-increasing}  $r(j+1) - r(j) \in\{0,1\}$ for all $j\in[t-1]$, and
		\item \label{item:layerpseudo-bounded} $|L_i| \leq k \Delta_1(P)$ for all $i \in [m]$.
	\end{enumerate}
\end{lemma}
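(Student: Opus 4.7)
My plan is to handle parts~(i) and~(ii) in order, since~(ii) will rely on~(i).

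For~(i), I would begin with the upper bound $|r(j+1)-r(j)|\le 1$, which follows directly from~(L3): the vertices of $e_j$ and $e_{j+1}$ each occupy exactly $k$ consecutive layers, one per layer, so the layer-intervals $[r(j), r(j)+k-1]$ and $[r(j+1), r(j+1)+k-1]$ must overlap in at least $k-1$ values to accommodate the $k-1$ shared vertices of $e_j\cap e_{j+1}$, forcing $|r(j+1)-r(j)|\le 1$. To rule out decreases I plan to use minimality: suppose $r$ is not non-decreasing and let $v^\ast:=\min\{r(j+1):r(j+1)<r(j)\}$, attained at some step $j^\ast$. The case analysis of non-shared layers forces the new vertex $w$ of $e_{j^\ast+1}$ (i.e., $w\in e_{j^\ast+1}\setminus e_{j^\ast}$) to lie in $L_{v^\ast}$. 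If $v^\ast=1$ then $w$ would be a second vertex of $L_1$ (the first being the vertex of $f$ in $L_1$, which sits in $e_1$), contradicting~(L1). If $v^\ast\ge 2$, then~(L2) provides a neighbour $w'\in L_{v^\ast-1}$ of $w$, necessarily in some later edge $e_{j'}$ (with $j'\ge j^\ast+1$) that also contains $w$; since $e_{j'}$ has vertices in both $L_{v^\ast-1}$ and $L_{v^\ast}$,~(L3) yields $r(j')\le v^\ast-1$. Walking from $r(j^\ast+1)=v^\ast$ down to $r(j')\le v^\ast-1$ in unit steps one finds an index $j_0\in[j^\ast+1,j'-1]$ with $r(j_0)\ge v^\ast$ and $r(j_0+1)\le v^\ast-1$; then $r(j_0+1)<r(j_0)$ with $r(j_0+1)<v^\ast$, contradicting the minimality of $v^\ast$.

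For~(ii), by~(i) the set $\{j:r(j)=v\}$ is a (possibly empty) contiguous range $[a_v,b_v]$ for each $v\ge 1$; set $n_v:=b_v-a_v+1$. The key subclaim I would establish is that the vertex $w_v:=L_v\cap e_j$ is the same for every $j\in[a_v,b_v]$: then $w_v$ lies in all $n_v$ edges of the range, so $n_v\le\deg_P(w_v)\le\Delta_1(P)$. To prove the subclaim, suppose the vertex changes between some $e_j$ and $e_{j+1}$ inside the range. Since $e_{j+1}$ has one vertex per layer in $[v,v+k-1]$ by~(L3), the new vertex $w$ of $e_{j+1}$ must lie in $L_v$. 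By~(i), every subsequent edge containing $w$ has $r$-value $\ge v$, while~(L3) combined with $w\in L_v$ forces $r\le v$; hence all of $w$'s edges have $r=v$ and thus contain no vertex in $L_{v-1}$, contradicting~(L2) applied to $w$ (the case $v=1$ is handled directly by~(L1), since $|L_1|=1$ precludes any change of the $L_1$-vertex). With the subclaim in hand, $J_i:=\{j:L_i\cap e_j\ne\emptyset\}$ equals $\bigcup_{v=\max(1,i-k+1)}^{i}[a_v,b_v]$, a union of at most $k$ ranges each of size $\le\Delta_1(P)$, so $|J_i|\le k\Delta_1(P)$; and~(L3) ensures each edge contributes at most one vertex to $L_i$, giving $|L_i|\le|J_i|\le k\Delta_1(P)$.

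I expect the main obstacle to be the minimality step in~(i): it simultaneously uses the single-step constraint on $r$, the sliding-window property of pseudopaths (each vertex occupies only a contiguous range of edges, so newly introduced vertices can only see edges with larger $r$-values), and the interaction of~(L1) for the base case $v^\ast=1$ with~(L2) for $v^\ast\ge 2$. Part~(ii) is then essentially a ``pinning'' argument: once~(i) is in place, each $v$-range must share a common vertex in $L_v$, which immediately bounds $n_v$ by $\Delta_1(P)$.
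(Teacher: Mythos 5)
Your proof is correct and follows essentially the same strategy as the paper's: for (i) you bound $|r(j+1)-r(j)|\le 1$ via (L3) and rule out decreases using (L1)/(L2), and for (ii) you show $r$ can stay at a fixed value for at most $\Delta_1(P)$ consecutive edges and then sum over the $\le k$ relevant values of $v$. The paper states the monotonicity of $r$ and its inequality \eqref{ray} quite tersely (attributing the latter to ``(i) and (L3)'' alone), whereas your minimality argument for (i) and your pinning subclaim for (ii) make the necessary appeals to (L1)/(L2) and the pseudopath contiguity property fully explicit; the underlying ideas coincide.
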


\begin{proof}
	We begin by describing explicitly how can one construct $\mathcal{L}$ by adding edges iteratively, as follows.
	First, start with all $L_1, \dotsc, L_m$ empty.
	Let $\mathbf{x} = (x_1, \dotsc, x_{k-1})$ and $x_k$ is the unique vertex in $e_1 \setminus \mathbf{x}$.
	Begin by adding $x_i$ to $L_i$ for all $1 \leq i \leq k$.
	Now, given $2 \leq j \leq t$, assume that $e_j$ already has been included,
	it has one vertex in each of the layers $L_{i+1}, \dotsc, L_{i+k}$,
	and we need to allocate $e_{j+1}$.
	Let $x, y$ be the unique vertices in $e_j \setminus e_{j+1}$ and $e_{j+1} \setminus e_j$, respectively.
	If $x \notin L_{i+1}$, we add $y$ to the same layer which contains $x$; otherwise we add $y$ to $L_{i+k+1}$.
	It is straightforward to show by induction that this construction satisfies \ref{item:flatten-root}--\ref{item:flatten-correct}, and since there is a unique layering by \cref{lemma:flattening}, this construction precisely describes $\mathcal{L}$.
	
	Now, we show that~\ref{item:layerpseudo-increasing} holds.
	Let $1 \leq j < t$.
	Note that since $|e_j \cap e_{j+1}| = k-1$, together with~\ref{item:flatten-correct} it must hold that $|r(j+1) - r(j)| \leq 1$.
	Thus we only need to show that $r(j+1) \geq r(j)$, but this follows immediately from the iterative construction for $\mathcal{L}$ which we described before.

	For~\ref{item:layerpseudo-bounded}, set $\Delta:= \Delta_1(T)$ and observe that since $P$ is a pseudopath,  for every $x \in V(P)$ there are $j \leq |E(P)|$ and $d < \Delta$ such that $x \in e_{i}$ if and only if $j \leq i \leq j + d$. In particular, because of~\ref{item:layerpseudo-increasing} and  \ref{item:flatten-correct}, we have
\begin{equation}\label{ray}
\text{$e_i\cap L_{r(j)}=\emptyset$ for all $1 \leq j \leq t$ and all $j+\Delta \leq i \leq t$.}
\end{equation}
Now assume for contradiction that there is an index $i\in [m]$ with $|L_i| >  k \Delta$. Note that each vertex in $L_i$ belongs to an edge that by~\ref{item:flatten-correct} meets the $k$ levels $L_\ell, L_{\ell+1},..., L_{\ell+k-1}$ for some $\ell\in\{i-k+1,  ..., i\}$. So, there is an index $\ell\in\{i-k+1, ..., i\}$ such that more than $\Delta$ edges meet all of the levels $L_\ell, L_{\ell+1},..., L_{\ell+k-1}$. Let $j\in[t]$ be minimum with the property  that $r(j)=\ell$. Then by~\eqref{ray}, 
only edges $e_j, e_{j+1}, ..., e_{j+\Delta-1}$ may meet $L_\ell$. As these are only $\Delta$ edges, we arrive at 
a contradiction, as desired.	
\end{proof}

\subsection{Cutting  $k$-trees}
We will now show how to partition a $k$-tree into smaller $k$-subtrees of controlled size.
Given a layered $k$-tree $(T,\mathbf x,\mathcal{L})$, with $\mathcal{L} = (L_1, \dotsc, L_m)$,
and given $\mathbf s=(s_1,\dotsc,s_{k-1})\in\partial^\circ T$, we say $\mathbf s$ is \emph{$\mathcal{L}$-layered} if  $\mathbf s \cap L_{i} = \{s_i\}$ for each $i =j, ..., j+k-2$ for some $j \in [m]$,
that is, $\mathbf s$ meets $k-1$ consecutive layers of $\mathcal{L}$.
In that case we say that $j$ is the \emph{rank} of~$\mathbf s$.

\begin{definition}[Induced $k$-subtree]
    Let $(T,\mathbf x,\mathcal{L})$ be a layered $k$-tree, with $\mathcal{L} = (L_1, \dotsc, L_m)$, and let $\mathbf s\in\partial ^\circ T$ be $\mathcal{L}$-layered.
    The tree $T_{\mathbf s}$ \textit{induced by $\mathbf s$} is the $k$-subtree of $T$ spanned by $\bigcup_{i\ge 0}E_i$ where $E_0:=\{\mathbf s\cup\{v\}:\alpha(v)=\mathbf s\}$ and $E_{i+1}$ contains all children of edges in $E_i$. Observe that $T_{\mathbf s}$ might be edgeless.
    Write $T-T_{\mathbf s}$ for the tree obtained from $T$ by deleting all edges in $E(T_{\mathbf s})$, and all vertices in $V(T_{\mathbf s})\setminus {\mathbf s}$.
\end{definition}

Clearly, if $T$ is rooted at $\mathbf x$, then $T_{\mathbf x}=T$.
Observe that if $(T_{\mathbf s}, {\mathbf s})$ is an induced $k$-subtree of $(T,\mathbf x)$,
and $\mathbf s' \in \partial^\circ T_{\mathbf s'}$ is $\mathcal{L}$-layered,
then the induced $k$-subtree $((T_{\mathbf s})_{\mathbf s'},\mathbf s')$ of $(T_{\mathbf s}, \mathbf s)$ is also an induced $k$-subtree of $(T,\mathbf x)$,
and we have $((T_{\mathbf s})_{\mathbf s'},\mathbf s') = (T_{\mathbf s'}, \mathbf s')$. Note that for each $f\in\partial T_{\mathbf s}$, the underlying set of $\mathbf s$ lies on the unique pseudopath from $f$ to the root in $T$.
Moreover,  $T_{\mathbf s}$ inherits a valid ordering and a layering from~$(T, \mathbf x, \mathcal{L})$, with layers $L_j \cap V(T_{\mathbf s})$, which we call  the \emph{inherited layering} of $(T_{\mathbf s},\mathbf s)$ and denote by $\mathcal{L}^{\mathbf s}$. 

The following observation will be useful in a moment.

\begin{proposition}\label{proposition:cut1}
	Let $(T,\mathbf x, \mathcal{L})$ be a layered $k$-tree with  $\mathcal L=(L_1,\dotsc,L_{m})$, $\Delta_1(T) \leq \Delta$ and $k\ge 2$. Let $F\subseteq E(T)$ be the set of all edges meeting $L_1$ and  and let $\mathbf{S}\subseteq\partial^\circ T$ consist of all the $\mathcal L$-layered tuples whose unordered vertices are in
	$\{e \setminus L_1 : e\in F\}$. Then,
	\begin{enumerate}[\upshape(i)]
	\item each $\mathbf s\in \mathbf S$ is $\mathcal L$-layered and has rank  2,
	 \item $|F|=|\mathbf S|\le\Delta$, and
	 \item $E(T)=F\cupdot\bigcupdot_{\mathbf s \in \mathbf S}E(T_\mathbf s)$.
	 \end{enumerate}
\end{proposition}

\begin{proof}
	 As $\Delta_1(T) \leq \Delta$ and $|L_1|=1$, we have $|F|\le\Delta$. The other properties are  easy to see.
\end{proof}

The next definition captures the previously mentioned partition of a $k$-tree.
Intuitively, it ensures that the small $k$-trees in the partition are of controlled size (no small $k$-tree is too large, and there are not many $k$-trees in the partition).
Also, the roots of each $k$-tree are ``far apart'' from each other, as measured by their rank.

\begin{definition}[$(\beta,d)$-decomposition]\label{beta-decomp}
	Let $\Delta,k\ge 2$,  and let $(T,\mathbf x, \mathcal{L})$ be a layered $k$-tree. For $\beta\in (0,1)$ and $d \ge 1$, a \emph{$(\beta, d)$-decomposition of $(T,\mathbf x,\mathcal{L})$} is a tuple $(D_i, \mathbf s_{i})_{1\le i\le m}$ of rooted $k$-subtrees of $T$ such that 
	\begin{enumerate}
	\item \label{item:beta-notsomany} $m\le 2\Delta^{d}/\beta$,
		\item \label{item:beta-edgedecomposition} $E(T)=\bigcupdot_{1\le i\le m} E(D_i)$,
		\item \label{item:beta-small} $|E(D_i)|\le \beta |E(T)|$ for each $1\le i\le m$,
				\item \label{item:beta-layeredroots} $\mathbf s_{1}=\mathbf x$ and each $\mathbf s_{i}$ is $\mathcal{L}$-layered, 
		\item \label{item:beta-singleparent} $(V(D_\ell) \setminus \mathbf s_\ell)\cap V(D_i) =\emptyset$ for all $1\le\ i < \ell \le m$, and
		\item \label{item:beta-distancerank} for each $2\le \ell\le m$, there is an 
		 $i < \ell$ such that $\mathbf s_{\ell} \in \partial^\circ D_i$,  and the rank of $\mathbf s_{\ell}$ in $(D_i,\mathbf s_i,\mathcal L^{\mathbf s_i})$ is at least $d$.
	\end{enumerate}
\end{definition}

\begin{lemma} \label{lemma:cut:trees}
	Let $\Delta, k\ge 2$, $d \ge 1$, $\beta\in (0,1)$, and let $(T,\mathbf x, \mathcal{L})$ be a layered $k$-tree with $t \ge 2 \Delta^{d} \beta^{-1}$ edges satisfying $\Delta_1(T)\le\Delta$.
	Then $T$ has a $(\beta, d)$-decomposition. 
\end{lemma}

\begin{proof}
We will find the trees $(D_i, \mathbf s_{i})_{1\le i\le m}$ inductively. At the end of each step $j\ge 0$, we will have found trees $D_1, D_2, \dotsc, D_j$ fulfilling properties \ref{item:beta-small}--\ref{item:beta-distancerank} from Definition~\ref{beta-decomp}, with $m$ replaced by $j$. Moreover, there will be a set $\mathbf S_j\subseteq \partial^\circ T$  such that 
\begin{enumerate}[(a)]
	\item \label{item:betapartial-decompos} $E(T)=\bigcupdot_{1\le i\le j} E(D_i)\cupdot\bigcupdot_{\mathbf s\in \mathbf S_j} E(T_\mathbf s)$, 
	\item \label{item:betapartial-distancerank} for each $\mathbf s\in \mathbf S_j\setminus \{\mathbf x\}$ , there is a unique $i\le j$ such that $\mathbf s \in \partial^\circ D_i$,  the rank of $\mathbf s$ in $D_i$ is at least $d$, and  $(V(T_\mathbf s) \setminus \mathbf s)\cap \bigcup_{1\le i\le j}V(D_i) =\emptyset$,
	\item \label{item:betapartial-largetrees} $|E(D_i)|\ge \beta t/(2\Delta^{d})$ for each $1\le i\le j$, and
	\item \label{item:betapartial-largenewroots} $|E(T_{\mathbf s})|\ge \beta t/(2\Delta^{d})$ for each $\mathbf s \in \mathbf S_j$.
\end{enumerate}
Note that \ref{item:betapartial-largetrees} guarantees that we stop  in some step $m\le 2\Delta^{d}/\beta$ with $\mathbf S_m=\emptyset$.
This, together with \ref{item:betapartial-decompos},
ensures \ref{item:beta-notsomany} and \ref{item:beta-edgedecomposition}  hold.

We start the procedure setting $\mathbf S_0=\{\mathbf x\}$, with all properties  trivially fulfilled.	
Now assume we are in step $j\ge 1$.
Choose any $\mathbf s\in \mathbf S_{j-1}$.
By \ref{item:betapartial-largenewroots}, we have $|E(T_{\mathbf s})| \ge \beta t/(2\Delta^{d})$.
If  $|E(T_{\mathbf s})| \leq \beta t$, then set $D_j := T_{\mathbf s}$, $\mathbf s_j := \mathbf s$ and $\mathbf S_{j} := \mathbf S_{j-1} \setminus \{ \mathbf s \}$ and end step $j$.
Otherwise, apply \cref{proposition:cut1} to $(T_{\mathbf s},\mathbf s)$, obtaining a set $F_1$ of edges, and a set $\mathbf S'_1$ of $\mathcal{L}$-layered elements of $\partial^\circ T_{\mathbf s}\subseteq \partial^\circ T$ of rank 2 in $T_{\mathbf s}$ (that is, $F_1$ and $S'_1$ are the sets $F$ and $S$ from the statement of \cref{proposition:cut1}).
Apply \cref{proposition:cut1} to all trees $T_{\mathbf s'}$ with $\mathbf s'\in \mathbf S'_1$, thus generating a set $F_2$ of edges and a set $\mathbf S'_2$, such that each $\mathbf s'\in \mathbf S'_2$ is $\mathcal{L}$-layered and has rank 3 in $T_{\mathbf s}$.
Continue in this manner until reaching a set $\mathbf S'_{d-1}$ of $\mathcal{L}$-layered elements of rank $d$, and set $F:=\bigcup_{1\le i\le d-1} F_i$.
Note that $|\mathbf S'_{d-1}|\le |F|\le \Delta^{d}$ and the edges in $F$ span a $k$-tree $T_F$ rooted at $\mathbf s$. 
Next, for each $\mathbf s'\in \mathbf S'_{d}$, in order, consider the tree $T_{\mathbf s'}$.
If $|E(T_{\mathbf s'})|< \beta t/(2\Delta^{d})$, then add $T_{\mathbf s'}$ to $T_F$ and delete $\mathbf s'$ from $\mathbf S'_d$, and continue to examine the next $\mathbf s' \in \mathbf S'_d$.
At the end of this process, we obtain a tree $B_1\supseteq T_F$ and a set $\mathbf Z_1\subseteq \mathbf S'_d$.
Note that 
\[|E(B_1)|\le |F|+|\mathbf S'_{d-1}| ( \beta t/(2\Delta^{d})) \leq |F|(1+\beta t/(2\Delta^d)) \le \beta t<|E(T_{\mathbf s})|,\]
which implies that $\mathbf Z_1\not=\emptyset$. Moreover, we have that $|E(T_{\mathbf z})| \ge \beta t/(2\Delta^{d})$ for each $\mathbf z \in \mathbf Z_1$. 
	
Let us note here that if $|B_1|\ge \beta t/2$, then we could set $D_j:=	B_1$, $\mathbf{s}_j:=\mathbf s$, and $\mathbf{S}_j=(\mathbf{S}_{j-1}\cup \mathbf{Z}_1)\setminus \{\mathbf{s}\}$, thus completing the inductive step.
So, let us suppose that $|B_1| < \beta t / 2$.
In what follows next we will, gradually, add edges to $B_1$ to make it have size between $\beta t / 2$ and $\beta t$.
To do this, successively, for $i\ge 1$, choose any $\mathbf z\in \mathbf Z_i$ and apply \cref{proposition:cut1} to $T_{\mathbf z}$.
Add the resulting edges given by \cref{proposition:cut1} to $B_i$, obtaining the set $B'_{i} \supseteq B_i$,
and  let $\mathbf S$ be the subset of $\partial T$ from the lemma. For each $\mathbf s'\in \mathbf S$, check whether $|E(T_{\mathbf s'}) |< \beta t/(2\Delta^{d})$, and if this is the case, then add $T_{\mathbf s'}$ to $B'_i$ and delete $\mathbf s'$ from $\mathbf S$.
After processing all $\mathbf s' \in \mathbf S$, this results in a set $\mathbf S'$, and a tree $B_{i+1}$.
Set $\mathbf Z_{i+1}:= (\mathbf Z_i\cup \mathbf S')\setminus \{\mathbf z\}$.
Then $|E(B_{i+1})|\le |E(B_{i})|+\Delta+|\mathbf S|(\beta t/(2\Delta^{d})) \leq |E(B_i)|+\beta t/(\Delta^{d-1})$
and  $|E(T_{\mathbf z})| \ge \beta t/(2\Delta^{d})$ for each $\mathbf z \in \mathbf Z_i$.
		
We continue until we reach the first index $h$ with $|E(B_h)|\ge \beta t/2$ (this must happen at some point, since in each step, at least one edge from $E(T_{\mathbf s})$ is added to $E(B_i)$, and $|E(T_{\mathbf s})| > \beta t$). Then $|E(B_h)|\le\beta t$.
Set $D_j:=B_h$, $\mathbf s_j := \mathbf s$, and set $\mathbf S_j:=(\mathbf S_{j-1}\cup \mathbf Z_h)\setminus \{\mathbf s\}$.
By construction,  \ref{item:betapartial-decompos}--\ref{item:betapartial-largenewroots} and \ref{item:beta-small}--\ref{item:beta-distancerank} from Definition~\ref{beta-decomp} hold for $\mathbf S_j$ and $D_1, \dotsc, D_j$.
\end{proof}	

\section{Tools}\label{section:tools}
In this section, we collect some tools that will be needed for the proof of Theorem~\ref{theorem:spanning-codegree}.

\subsection{The weak hypergraph regularity lemma}

Let $H$ be a $k$-graph and let $V_1, \dotsc, V_k$ be pairwise disjoint  subsets of  $V(H)$.
Let $H[V_1, \dotsc, V_k]$ be the $k$-partite subhypergraph of $H$ induced by all edges that intersect all sets $V_i$.
The \emph{density} of $H[V_1, \dotsc, V_k]$ is defined as \[ d(V_1, \dotsc, V_k) := \frac{e_H(V_1, \dotsc, V_k)}{|V_1| \dotsb |V_k|}, \]
where $e_H(V_1,\dotsc,V_k)$ denotes the number of edges in $H[V_1,\dotsc,V_k]$.
For $\eps, d > 0$, we say a $k$-tuple $(V_1, \dotsc, V_k)$ of pairwise disjoint non-empty subsets of $V(H)$ is \emph{$(\eps, d)$-regular} if \[ |d(W_1, \dotsc, W_k) - d| \leq \eps \]
for all $k$-tuples of subsets $W_i \subseteq V_i$ satisfying $|W_1| \dotsb |W_k| \ge \eps |V_1| \dotsb |V_k|$.
A $k$-tuple $(V_1, \dotsc, V_k)$ will be called \emph{$\eps$-regular} if it is $(\eps,d)$-regular for some $d \ge 0$.

The {\it weak regularity lemma for hypergraphs} ensures that the vertex set of every $k$-graph can be partitioned into a bounded number of clusters, such that almost all $k$-tuples of these clusters are $\eps$-regular. We will use the lemma in the following form (see~\cite[Theorem 9]{KNRS2010}).

\begin{theorem}[Weak Hypergraph Regularity Lemma] \label{theorem:weakregularity}
	Let $k \ge 2$ and let $1/n, 1/T_0 \ll 1/t_0, 1/k, \eps$.
	For every $k$-graph $H$ on $n$ vertices there exists a partition $\{ V_0, V_1, \dotsc, V_t \}$ of $V(H)$ such that
	\begin{enumerate}
		\item \label{item:weakregularity1} $t_0 \leq t \leq T_0$,
		\item \label{item:weakregularity2} $|V_0| \leq \eps n$ and $|V_1| = \dotsb = |V_t|$, and
		\item \label{item:weakregularity3} for all but at most $\eps \binom{t}{k}$ sets $\{ i_1, \dotsc, i_k \} \subseteq [t]$, the $k$-tuple $(V_{i_1}, \dotsc, V_{i_k})$ is $\eps$-regular.
	\end{enumerate}
\end{theorem}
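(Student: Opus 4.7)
The plan is to run the classical energy/index increment argument of Szemerédi, lifted from the graph case to general uniformity $k$; compared with the graph case, the only genuine novelty is bookkeeping across $k$ coordinates simultaneously instead of two.

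First, for any partition $\mathcal{P}=\{U_0, U_1,\dotsc,U_s\}$ of $V(H)$ (with $U_0$ an exceptional part that is ignored in the sum), I define the \emph{index}
\[
\mathrm{ind}(\mathcal{P}) := \sum_{\{i_1,\dotsc,i_k\}\in \binom{[s]}{k}} \frac{|U_{i_1}|\dotsb|U_{i_k}|}{n^k}\, d(U_{i_1},\dotsc,U_{i_k})^2,
\]
so that $\mathrm{ind}(\mathcal{P})\in[0,1]$. Then I would establish two standard lemmas. \textbf{Monotonicity under refinement:} if $\mathcal{Q}$ refines $\mathcal{P}$ then $\mathrm{ind}(\mathcal{Q})\ge\mathrm{ind}(\mathcal{P})$, which follows from $k$ successive applications of the Cauchy--Schwarz inequality, one for each coordinate of the density. \textbf{Defect lemma:} if $(V_{i_1},\dotsc,V_{i_k})$ fails to be $\eps$-regular, with witness sets $W_{i_j}\subseteq V_{i_j}$, then splitting each $V_{i_j}$ into $W_{i_j}$ and $V_{i_j}\setminus W_{i_j}$ strictly increases the contribution of this $k$-tuple to the index by at least $c(k,\eps)\prod_j|V_{i_j}|/n^k$ for some positive constant $c(k,\eps)$ depending polynomially on $\eps$. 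This is the defect form of Cauchy--Schwarz applied to the $2^k$ densities of the sub-tuples obtained by the split, using crucially that the witnesses violate the density comparison by more than $\eps$.

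Starting from an arbitrary equipartition of $V(H)\setminus V_0$ into $t_0$ parts, I would iterate as follows: given a current partition that fails $\eps$-regularity (that is, has more than $\eps\binom{t}{k}$ irregular $k$-tuples), simultaneously refine along the witness sets of all irregular tuples. By Monotonicity and the Defect lemma, the index then rises by at least $\eps\cdot c(k,\eps)$ in each such iteration, because each of the $\eps\binom{t}{k}$ bad tuples contributes at least $c(k,\eps)/\binom{t}{k}$ after averaging over parts of near-equal size. Since $\mathrm{ind}(\mathcal{P})\le 1$, only boundedly many iterations are possible; each iteration multiplies the number of parts by at most $2^k$, so the final number of parts is bounded by a tower-type function $T_0$ of $k$, $t_0$ and $\eps^{-1}$, yielding \eqref{item:weakregularity1} of the statement (with $T_0$ absorbed into the hierarchy $1/T_0\ll 1/t_0,1/k,\eps$).

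Finally, I would equalize the cluster sizes: partition each $V_i$ into blocks of a common size $\lfloor n/t\rfloor$ and dump the at most $t$ leftover vertices per cluster into $V_0$. Since $t\le T_0$ and $1/n\ll 1/T_0$, this gives $|V_0|\le \eps n$, satisfying~\eqref{item:weakregularity2}. Only a tiny proportion of each original cluster is discarded, so the regularity of a $k$-tuple is preserved after a harmless rescaling of $\eps$; shrinking $\eps$ at the start by a constant factor compensates for this loss and gives~\eqref{item:weakregularity3}. The main obstacle is the Defect lemma: the graph-case Cauchy--Schwarz identity must be promoted to handle the $2^k$ density terms produced by splitting $k$ clusters at once, and one must verify that the irregularity on one sub-tuple is not absorbed by cancellations among the others. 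Once this is established, the iteration and equalization steps are entirely routine.
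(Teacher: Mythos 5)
Note first that the paper does not prove this lemma but cites it from Kohayakawa, Nagle, R\"odl and Schacht~\cite[Theorem~9]{KNRS2010}, so there is no in-paper argument to compare against. Your energy-increment scheme is the standard proof of the weak hypergraph regularity lemma, and the skeleton --- index, monotonicity under refinement, defect increment, bounded iteration, then equalization --- is the right one. Two steps of your write-up, however, are wrong enough to count as genuine gaps rather than cosmetic imprecision.

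Your claim that each iteration ``multiplies the number of parts by at most $2^k$'' is false. Passing to the common refinement by the witness sets of all irregular $k$-tuples can split a single cluster $V_i$ into as many as $2^{\binom{t-1}{k-1}}$ pieces, since $V_i$ lies in up to $\binom{t-1}{k-1}$ of those tuples and each contributes one bipartition of $V_i$. It is precisely this $t$-dependent blow-up that forces $T_0$ to be of tower type; your final conclusion survives, but the factor you state is exponentially off.

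More substantially, you equalize cluster sizes only once, at the very end; that does not suffice. The per-iteration index increment of order $\eps \cdot c(k,\eps)$, which is what terminates the iteration, rests on the estimate that each of the $\ge \eps\binom{t}{k}$ irregular $k$-tuples contributes on the order of $c(k,\eps)/t^k$ to the index, and this requires the current parts to have comparable size. After a single refinement step the parts can be wildly uneven, so the irregular tuples might all involve tiny parts carrying negligible total weight $\prod_j|V_{i_j}|/n^k$, and the index barely moves. The classical argument avoids this by re-chopping the refined parts into equal-sized blocks (dumping remainders into $V_0$) after \emph{every} iteration, together with a check that this equalization loses at most $O(1/t)$ in index, which the defect increment dominates. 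Without that step, your iteration is not guaranteed to halt. Once this is repaired, and once you establish the multi-coordinate defect Cauchy--Schwarz you rightly flag as the main technical point, the proof goes through.
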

Any partition $\mathcal{P} = \{ V_0, V_1, \dotsc, V_t \}$ of $V(H)$ satisfying \ref{item:weakregularity1}--\ref{item:weakregularity3} will be called an \emph{$\eps$-regular} partition of $H$.
Given $d > 0$, we define the \emph{$d$-reduced $k$-graph $R_d(H)$ of $H$ with respect to $\mathcal{P}$} as follows.
Its vertex set is $[t] = \{1, \dotsc, t \}$, and its edges are the $k$-sets $\{ i_1, \dotsc, i_{k} \}$ such that $d_H(V_{i_1}, \dotsc, V_{i_k}) \ge d$ and $(V_{i_1}, \dotsc, V_{i_k})$ is $\eps$-regular.
We will also refer to $R_d(H)$ as ``the'' \emph{$d$-reduced $k$-graph of $H$}. (Even if $R_d(H)$ depends on the choice of $\mathcal{P}$, we omit explicit reference to $\mathcal{P}$ in the notation for simplicity.)

We will need to find almost-perfect matchings in the reduced $k$-graph. For $k=2$, it is easy to find one using graph regularity, and for $k\ge 3$ its existence may be deduced from Claims 4.4 and 4.5 in~\cite{RRS08}.

\begin{lemma}\label{lemma:matching}
Let $k \ge 2$, $0 < 1/n \ll 1/t \ll \eps\ll 1/k, \gamma, \eta$, and let $d\ll \gamma$.
Let $H$ be a $k$-graph on $n$ vertices with $\delta_{k-1}(H) \ge (1/2 + \gamma)n$.
Let $\mathcal{P} = \{ V_0, V_1, \dotsc, V_t \}$ be an $\eps$-regular partition of $V(H)$. Then the $d$-reduced $k$-graph $R_{d}(H)$ has a matching covering at least $(1 - \eta) t$ vertices.
\end{lemma}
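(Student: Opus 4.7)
The plan is to show that the minimum codegree condition on $H$ is essentially inherited by $R_d(H)$, up to a negligible defect set of $(k-1)$-tuples, and then to apply a near-matching result for $k$-graphs with minimum codegree just above $t/2$.

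For the first step, fix a $(k-1)$-subset $I=\{i_1,\dotsc,i_{k-1}\}\subseteq[t]$ such that the number of $\eps$-irregular $k$-tuples of clusters containing $I$ is at most $\sqrt{\eps}\,t$; by an averaging argument applied to the bound $\eps\binom{t}{k}$ on the total number of irregular $k$-tuples, all but at most $\sqrt{\eps}\binom{t}{k-1}$ of the $(k-1)$-sets have this property. For such an $I$, count the edges of $H$ that meet $V_{i_j}$ in a single vertex for each $j\in[k-1]$ and have their remaining vertex outside $V_0\cup V_{i_1}\cup\dotsb\cup V_{i_{k-1}}$: the codegree hypothesis yields a lower bound of
\[
(1/2+\gamma)n\cdot|V_{i_1}|\dotsb|V_{i_{k-1}}|-O(\eps n)\cdot|V_{i_1}|\dotsb|V_{i_{k-1}}|
\]
on this count. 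Splitting this sum according to the index $i_k$ of the cluster containing the remaining vertex, and bounding above the contribution of irregular tuples and of tuples of density less than $d$, I conclude, after dividing by $|V_1|^k$, that at least $(1/2+\gamma/2)t$ of the indices $i_k$ satisfy $\{i_1,\dotsc,i_k\}\in R_d(H)$, provided $d,\eps\ll\gamma$.

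For the second step, discard from $R_d(H)$ any vertex incident to too many \emph{bad} $(k-1)$-sets (those not satisfying the property above); since there are at most $\sqrt{\eps}\binom{t}{k-1}$ bad sets, this costs at most $\eta t/2$ vertices. The remaining $k$-graph $R'$ satisfies $|V(R')|\ge(1-\eta/2)t$ and $\delta_{k-1}(R')\ge(1/2+\gamma/3)|V(R')|$. For $k=2$, Dirac's theorem immediately provides a Hamilton cycle in $R'$ and hence a near-perfect matching. For $k\ge 3$, I invoke Claims~4.4 and~4.5 of~\cite{RRS08}, which together show that any such hypergraph contains a matching covering all but at most $\eta|V(R')|/2$ of its vertices. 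Lifting this matching back to $R_d(H)$ yields a matching covering at least $(1-\eta)t$ vertices, as desired.

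The main obstacle will be the careful bookkeeping in the first step: the losses coming from the exceptional cluster $V_0$, from $\eps$-irregular $k$-tuples of clusters, from low-density tuples, and from the Markov-type step used to isolate the good $(k-1)$-sets must all be absorbed by the slack $\gamma n$ in the original codegree hypothesis, leaving a residual slack of order $\gamma t$ on the reduced-graph side that survives all approximations.
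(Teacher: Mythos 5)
The overall two–step plan — first transfer the codegree condition from $H$ to the reduced hypergraph, up to a small set of exceptional $(k-1)$-sets, then invoke a near-matching theorem — is exactly what the paper has in mind (the paper gives no proof, and simply points to Claims~4.4 and~4.5 of~\cite{RRS08}, which carry out precisely these two steps). Your Step~1 is sound: the averaging to isolate $(k-1)$-sets $I$ with few irregular $k$-tuples through them, the lower bound from the codegree hypothesis, and the discarding of contributions from $V_0$, irregular tuples, and low-density tuples, all go through and give that all but $\sqrt{\eps}\binom{t}{k-1}$ many $(k-1)$-subsets of $[t]$ have codegree at least $(1/2+\gamma/2)t$ in $R_d(H)$.

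The gap is in Step~2. You delete from $R_d(H)$ every cluster that lies in ``too many'' bad $(k-1)$-sets and claim the surviving hypergraph $R'$ has $\delta_{k-1}(R')\ge (1/2+\gamma/3)\lvert V(R')\rvert$. For $k\ge 3$ this does not follow. A bad $(k-1)$-set $S$ survives into $R'$ as soon as \emph{each} of its $k-1$ clusters is in only few bad $(k-1)$-sets, and nothing prevents this: the bad sets need not cluster around a small vertex set, so a Markov-type vertex deletion does not form a vertex cover of the bad $(k-1)$-sets. Such a surviving $S$ still has codegree below $(1/2+\gamma/2)t$ in $R_d(H)$, and therefore also in its subgraph $R'$, so the claimed minimum codegree bound is simply false. (Your argument does work when $k=2$, since then the ``$(k-1)$-sets'' are just vertices, and deleting bad vertices literally removes the exceptional objects.) For $k\ge 3$ one cannot upgrade ``almost all $(k-1)$-sets have high codegree'' to ``all $(k-1)$-sets have high codegree'' by deleting a sublinear number of vertices; one must instead argue for a near-perfect matching directly from the ``almost all'' condition, which is exactly what the cited claims of~\cite{RRS08} do. Put differently, your invocation of those claims only after having manufactured a clean $\delta_{k-1}$ bound misplaces where the real work is: the clean bound is unattainable, and the robustness to a few exceptional $(k-1)$-sets is the actual content you need to cite or reprove.
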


\subsection{Degenerate hypergraphs and extensible edges}

Given $k\ge 2$ and $s \in\mathbb N$, let $K^{(k)}(s)$ denote the complete $k$-partite $k$-graph with each class of size $s$.
To be precise, $V(K^{(k)}(s))$ is partitioned in $k$ clusters $V_1, \dotsc, V_k$ of size $s$ each, and its edges are precisely the $k$-sets which intersect each $V_i$ exactly once.
The following result, due to Erd\H os~\cite{Erdos1964}, is a hypergraph version of the classical K\H ov\'ari--S\'os--Tur\'an theorem~\cite{KST}.

\begin{lemma}\label{lemma:turanzero}
Let $1/n\ll 1/k, 1/s, \eps$.
	Let $H$ be a $k$-graph with $n$ vertices and at least $\eps n^{k}$ edges.
	Then $H$ contains a copy of $K^{(k)}(s)$ as a subgraph.
\end{lemma}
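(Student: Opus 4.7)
The plan is to proceed by induction on $k$, preceded by a standard reduction to the $k$-partite setting. For $k=1$ the claim is trivial, since $\eps n \ge s$ when $n$ is large enough. Assume $k\ge 2$ and that the statement holds for $k-1$ (with appropriately modified constants).

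First, I would randomly partition $V(H)$ into classes $V_1, \dotsc, V_k$ by assigning each vertex uniformly and independently. The expected number of edges of $H$ that hit each $V_i$ exactly once is $\frac{k!}{k^k}e(H)\ge \frac{k!}{k^k}\eps n^k$, so there is a choice of partition in which the $k$-partite subhypergraph $H'=H[V_1,\dotsc,V_k]$ has at least this many edges; a Chernoff estimate simultaneously gives $|V_i|=(1\pm o(1))n/k$ for all $i$. After relabelling, I may therefore work with a $k$-partite $k$-graph on parts $V_1,\dotsc,V_k$ of sizes $m_i=\Theta(n)$ and with at least $\eps' m^k$ edges, for some $\eps'=\eps'(\eps,k)>0$ and $m=\min_i m_i$.

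Next comes the double counting with convexity. For each tuple $\mathbf{v}=(v_2,\dotsc,v_k)\in V_2\times\dotsb\times V_k$, set $d(\mathbf{v})=|\{v_1\in V_1:\{v_1,v_2,\dotsc,v_k\}\in H'\}|$, so that $\sum_{\mathbf{v}}d(\mathbf{v})=e(H')\ge \eps' m^k$. Counting pairs $(\mathbf{v},S)$ with $S\in\binom{V_1}{s}$ and $S\subseteq N_{H'}(\mathbf{v})$, convexity of $x\mapsto\binom{x}{s}$ yields
\[
\sum_{\mathbf{v}}\binom{d(\mathbf{v})}{s}\ge m^{k-1}\binom{\eps' m}{s}\ge c_1(\eps,k,s)\,m^{k-1+s},
\]
for $m$ sufficiently large. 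By pigeonhole over the $\binom{m_1}{s}\le m^s$ choices of $S$, there is a fixed $S^\star\in\binom{V_1}{s}$ that is contained in the neighbourhood of at least $c_2(\eps,k,s)\,m^{k-1}$ tuples $\mathbf{v}$.

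Finally, the tuples $\mathbf{v}$ that extend $S^\star$ to an edge of $H'$ (for every $v_1\in S^\star$) form a $(k-1)$-partite $(k-1)$-graph $H^\star$ on $V_2,\dotsc,V_k$ with at least $c_2\,m^{k-1}$ edges, i.e.\ density bounded below by an absolute constant depending only on $\eps,k,s$. Applying the inductive hypothesis to $H^\star$ (with parameter $s$ and a sufficiently large ground set, which is ensured by the hierarchy $1/n\ll 1/k,1/s,\eps$) produces a copy of $K^{(k-1)}(s)$ inside $H^\star$; together with $S^\star$, this copy yields the desired $K^{(k)}(s)\subseteq H'\subseteq H$. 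The only place that requires any care is tracking the constants across the random partitioning, the convexity bound, and the induction, but no conceptual obstacle arises beyond these routine estimates.
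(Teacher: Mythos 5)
The paper does not prove this lemma; it simply cites Erd\H os (1964) and moves on. Your argument is a correct, self-contained proof, and it is in fact essentially the classical Erd\H os argument: reduce to the $k$-partite case, then run the K\H ov\'ari--S\'os--Tur\'an convexity/double-count to fix an $s$-set $S^\star$ in one part, pass to the $(k-1)$-graph of extensions of $S^\star$, and induct on the uniformity. The one place worth tightening in a final write-up is the bookkeeping around "$m$'': you use $m=\min_i m_i$ in the lower bound $e(H')\ge\eps'm^k$, but then apply Jensen over $m_2\cdots m_k$ terms and pigeonhole over $\binom{m_1}{s}$ sets, so you really want all $m_i=(1\pm o(1))n/k$ (which your Chernoff step supplies) to make the substitutions $m_2\cdots m_k\approx m^{k-1}$ and $\binom{m_1}{s}\lesssim m^s$ legitimate. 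Similarly, note that $\binom{x}{s}$ is convex only for $x\ge s-1$, so one should adopt the usual convention $\binom{x}{s}=0$ for $x<s$ before invoking Jensen; since the average degree $\bar d=\Theta(n)\gg s$, this is harmless. Neither point is a gap, just a spot where the constants need to be written down carefully, as you acknowledge.
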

Note that for any $k \ge 2$,  the complete $k$-partite $k$-graph $\kdos$  has $2k$ vertices and $2^k$ edges.
Given a $k$-graph $H$ and an edge $e \in H$, let $\degkdos_H(e)$ be the number of copies of $\kdos$ in $H$ in which $e$ participates. Note that $\degkdos_H(e)\le \binom{n-k}{k}$ always.

\begin{definition}[$\theta$-extensible edge]
	Given an $n$-vertex $k$-graph $H$ and $\theta > 0$, we say an edge $e \in H$ is \emph{$\theta$-extensible} if $\degkdos_H(e) \ge \theta \binom{n-k}{k}$.
\end{definition}

Extensible edges will be useful in our embedding of $k$-trees.
We show that in an appropriately dense $k$-graph most edges are extensible.

\begin{lemma}\label{lem:extensible}
	Let $1/n, \theta \ll \eps, 1/k$.
	In any $k$-graph on $n$ vertices, all but at most $\eps \binom{n}{k}$ edges are $\theta$-extensible.
\end{lemma}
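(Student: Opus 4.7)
I read the lemma as the assertion that the set
\[ B := \bigl\{ e \in E(H) : \degkdos_H(e) < \theta \binom{n-k}{k} \bigr\} \]
of non-$\theta$-extensible edges satisfies $|B| \le \eps\binom{n}{k}$; this is the form that will be used later in the paper, whereas the literal phrasing ``are not'' is inconsistent with dense examples such as $K^{(k)}_n$ and appears to be a typographical slip. My plan is a double-counting argument run against a supersaturation bound deduced from Lemma~\ref{lemma:turanzero}.

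Assume toward a contradiction that $|B| > \eps \binom{n}{k}$, and write $H_B := (V(H), B)$. The first step will be to show that $H_B$ contains at least $\delta n^{2k}$ copies of $\kdos$ for some $\delta = \delta(\eps, k) > 0$. To do this, fix a large constant $m_0 = m_0(\eps, k)$ and consider a uniformly random $m_0$-subset $W \subseteq V(H)$. By linearity, $\expectation[e(H_B[W])] \ge \eps \binom{m_0}{k}$, and a standard concentration argument (the edge count in $W$ is a sum of $\binom{m_0}{k}$ negatively-correlated indicators) shows that a positive fraction of the $\binom{n}{m_0}$ subsets $W$ have edge density at least $\eps/2$. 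For each such $W$, Lemma~\ref{lemma:turanzero} with parameter $s = 2$ produces a $\kdos$-copy inside $H_B[W]$. Aggregating over all $W$ and dividing by the overcount factor $\binom{n-2k}{m_0-2k}$ yields the claimed $\delta n^{2k}$ copies of $\kdos$ in $H_B$.

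Next, I will double-count pairs $(e, K)$ where $K$ is a copy of $\kdos$ in $H_B$ and $e \in E(K)$. On the one hand, the $\ge \delta n^{2k}$ copies yield at least $2^k \delta n^{2k}$ such pairs. On the other hand, every $e \in B$ contributes $\degkdos_{H_B}(e) \le \degkdos_H(e) < \theta \binom{n-k}{k}$ pairs, so the total is at most $|B|\,\theta \binom{n-k}{k} \le \binom{n}{k}\theta \binom{n-k}{k} = O(\theta\, n^{2k})$. Comparing the two sides forces $\theta = \Omega(\delta(\eps, k))$, which contradicts the hierarchy $\theta \ll \eps, 1/k$ once $\theta$ is taken small enough in terms of $\eps$ and $k$.

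The main obstacle is the supersaturation step, since Lemma~\ref{lemma:turanzero} as stated only guarantees a single $\kdos$-copy. A naive edge-deletion argument based on $\mathrm{ex}(n, \kdos) = o(n^k)$ would yield only $\Omega(n^k)$ copies, one order of magnitude short of what the double count requires; the random-sampling route above is what supplies the needed $\Omega(n^{2k})$ bound. After this is in hand, the rest of the proof is a routine averaging.
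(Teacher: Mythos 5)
Your proposal is correct and is essentially the paper's proof: the paper also considers the subgraph of non-$\theta$-extensible edges, applies supersaturation for $\kdos$ (quoting it as a black box from Keevash, where you rederive it by the standard random $m_0$-subset sampling), and then the same double count / averaging over the at most $\binom{n}{k}$ edges produces an edge lying in at least $\theta\binom{n-k}{k}$ copies of $\kdos$, a contradiction; your reading of the statement (at most $\eps\binom{n}{k}$ edges fail to be $\theta$-extensible) is also the one the paper's proof and later applications use. One small correction inside your supersaturation step: for constant $m_0$ the edge count $e(H_B[W])$ is \emph{not} concentrated (e.g.\ if $B$ sits inside a clique on $\Theta(n)$ vertices), but no concentration is needed — since $0\le e(H_B[W])\le\binom{m_0}{k}$ and $\expectation[e(H_B[W])]\ge\eps\binom{m_0}{k}$, a reverse-Markov/averaging bound already gives that at least an $\eps/2$-fraction of the $m_0$-sets have density at least $\eps/2$, after which your aggregation and the final comparison $\theta=\Omega(\delta(\eps,k))$ go through as written.
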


\begin{proof}
	Lemma~\ref{lemma:turanzero} implies that the Tur\'an density of $\kdos$ is zero.
	Hence, by standard supersaturation arguments~\cite[Lemma 2.1]{Keevash2011}, there exist $n_0$ and $\alpha > 0$ such that every $k$-graph on $n \ge n_0$ vertices and at least $\eps \binom{n}{k}$ edges has at least $\alpha \binom{n}{2k}$ copies of $\kdos$. To prove the lemma we shall use $n \ge n_0$ and $\theta \leq (k!)^2 2^k \alpha / (2k!)$.
	
	Indeed, let $H$ be any $k$-graph on $n$ vertices and let $H' \subseteq H$ be the $k$-graph formed by the non-$\theta$-extensible edges of $H$.
	To reach a contradiction, suppose that $H'$ has at least $\eps \binom{n}{k}$ edges.
	By the choice of $n_0$ and $\alpha$, we know that $H'$ contains at least $\alpha \binom{n}{2k}$ copies of $\kdos$.
	Note that~$H'$ has at most $\binom{n}{k}$ edges and recall that each copy of $\kdos$ has $2^k$ edges.
	Therefore, a double-counting argument shows that some edge $e$ in $H'$  participates in at least $2^k \alpha \binom{n}{2k} / \binom{n}{k} \ge \theta \binom{n-k}{k}$ copies of $\kdos$.
	So, $\degkdos_H(e) \ge \theta \binom{n-k}{k}$, or in other words, $e$ is a $\theta$-extensible edge of $H'$, and therefore of $H$.
	This contradicts the definition of $H'$.
\end{proof}

\subsection{Reservoirs}
Let $H$ be a $k$-graph $H$, and let $F\subseteq \partial H$.
Recall that $\deg_H(F)$ denotes the joint degree of $F$, as defined in \eqref{equation:jointdegree}.
For $U \subseteq V(H)$, we let
\begin{align}
\deg_H(F,U) = \big| \{v\in U : \text{$f \cup \{ v \}\in H$ for each $f\in F$}\}\big|.
\end{align}
Similarly, recalling that previously we defined $\degkdos_H(e)$  as the number of copies of~$\kdos$ in the $k$-graph~$H$ that contain the edge $e\in E(H)$, we define  $\degkdos_H(e, U)$ as the number of copies of $\kdos$ in $H[U\cup e]$ that contain $e$.

\begin{definition}[Reservoir]
Let $H$ be a $k$-graph on $n$ vertices, and let $\gamma, \mu > 0$.
We say that a set $U \subseteq V(H)$ is a \emph{$(\gamma, \mu, h)$-reservoir for $H$} if
\begin{enumerate}
	\item\label{res:i} $|U| = (\gamma \pm \mu)n$,
	\item\label{res:ii} for every  $F \subseteq \partial (H)$ with $|F| \leq h$ we have $\deg_H(F, U) \ge (\deg_H(F)/n - \mu) |U|$, and
	\item\label{res:iii} for every $e \in H$, we have $\degkdos_H(e, U) \ge (\degkdos_H(e)/\binom{n-k}{k} - \mu) \binom{|U|-k}{k}$.
\end{enumerate}
\end{definition}

\begin{lemma}[Reservoir Lemma]\label{lem:reservoir}
	Let $1/n \ll \mu \ll \gamma, 1/h \le 1$.
	Then every $k$-graph $H$ on $n$ vertices has a $(\gamma, \mu, h)$-reservoir.
\end{lemma}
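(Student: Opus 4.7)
The plan is to use the probabilistic method: construct $U$ by including each vertex of $V(H)$ independently with probability $\gamma$, and show that with positive probability all three reservoir conditions hold simultaneously. All concentration arguments will produce failure probability of the form $\exp(-\Omega(\mu^c n))$ for some $c = c(k,h)$, and since $1/n \ll \mu$, such terms will survive union bounds over polynomially many events in $n$.

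Condition~\ref{res:i} follows from Chernoff applied to $|U| = \sum_v X_v$ (with $X_v \sim \mathrm{Bernoulli}(\gamma)$): with probability $1 - 2\exp(-\Omega(\mu^2 n))$, we have $|U| = (\gamma \pm \mu/10)n$, which is well within the required $(\gamma \pm \mu)n$. For condition~\ref{res:ii}, fix $F \subseteq \partial H$ with $|F| \le h$ and put $d = \deg_H(F)$, so that $\deg_H(F,U)$ is the sum of $d$ independent Bernoulli$(\gamma)$ variables. If $d \le \mu n$, the required bound $\deg_H(F,U) \ge (d/n - \mu)|U|$ has non-positive right-hand side and is trivial. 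Otherwise, Chernoff gives $\deg_H(F,U) \ge (1-\mu/4)\gamma d$ with probability at least $1 - 2\exp(-\Omega(\mu^3 \gamma n))$, and combined with $|U| \le (\gamma + \mu/10)n$ a short computation yields the desired inequality. There are at most $n^{(k-1)h+1}$ choices of $F$, and a union bound succeeds because $1/n \ll \mu \ll \gamma$.

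Condition~\ref{res:iii} is the most delicate, since $\degkdos_H(e,U) = \sum_{S} \mathbf{1}[S \subseteq U]$ (summing over $k$-sets $S \subseteq V(H)\setminus e$ that complete a copy of $\kdos$ with $e$) is a polynomial of degree $k$ in the $X_v$, not a simple sum of independent variables. For this we use McDiarmid's bounded-differences inequality: flipping a single $X_v$ changes $\degkdos_H(e,U)$ by at most the number of valid $S$ containing $v$, which is at most $\binom{n-k-1}{k-1}$. Hence $\sum_v c_v^2 = O(n^{2k-1})$. Since $\mathbb{E}[\degkdos_H(e,U)] = \gamma^k \degkdos_H(e)$ and $\binom{|U|-k}{k}/\binom{n-k}{k} = \gamma^k(1 \pm O(\mu))$ on the high-probability event from condition~\ref{res:i}, separating the cases $\degkdos_H(e)/\binom{n-k}{k} \le \mu/2$ (trivial) and otherwise (McDiarmid with deviation $t = \Theta(\mu \gamma^k n^k)$, giving failure probability $\exp(-\Omega(\mu^2 \gamma^{2k} n))$) gives the required bound. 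Union-bounding over the at most $n^k$ edges preserves $o(1)$ total failure probability.

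The main obstacle is purely bookkeeping: making sure that the deviations provided by the concentration inequalities are absorbed into the $\mu$-slack in each reservoir condition, while simultaneously keeping $|U|$ concentrated tightly enough to control the ratios $|U|/n$ and $\binom{|U|-k}{k}/\binom{n-k}{k}$ that appear on the right-hand sides. With the given hierarchy $1/n \ll \mu \ll \gamma$, all relevant slacks are of compatible order and the verification goes through without difficulty.
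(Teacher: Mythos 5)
Your proof is correct and follows essentially the same approach as the paper's: choose $U$ by including each vertex independently with probability $\gamma$, use Chernoff for conditions (i) and (ii) (with the same observation that small $\deg_H(F)$ makes (ii) trivial), and use McDiarmid's bounded-differences inequality for condition (iii) because $\degkdos_H(e,U)$ is a degree-$k$ polynomial in the indicator variables, then union-bound over polynomially many events. The only cosmetic difference is that you concentrate $|U|$ to within $\mu n/10$ while the paper uses $n^{2/3}$; both choices are comfortably sufficient.
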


The proof of Lemma~\ref{lem:reservoir} is probabilistic, and
we will use the following standard concentration inequalities for random variables.

\begin{theorem}[Chernoff's inequality {\cite[Theorem 2.1]{JLR2000}}]\label{theorem:chernoff}
	Let $0<\alpha<3 \expectation[X] / 2$ and $X \sim \emph{\text{Bin}}(n,p)$ be a binomial random variable. 
	Then $\Pr\left(|X - \expectation[X] | > \alpha \right) < 2\exp(-\alpha^2/(3 \expectation[X] ))$.
\end{theorem}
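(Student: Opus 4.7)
The plan is to pick $U \subseteq V(H)$ randomly by including each vertex independently with probability $\gamma$, and to show that each of conditions~\ref{res:i}--\ref{res:iii} fails with probability $\exp(-\Omega(\mu^2 n))$ for any fixed choice of the parameters $F$ or $e$. Since there are at most $n^{O(hk)}$ such choices, a union bound yields positive probability of simultaneous success provided $\mu^2 n \gg \log n$, which follows from the hierarchy $1/n \ll \mu$.

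\smallskip
\noindent\textbf{Condition~\ref{res:i}.} Here $|U| \sim \mathrm{Bin}(n,\gamma)$, so Chernoff (Theorem~\ref{theorem:chernoff}) applied with deviation $\mu n / 2$ immediately gives $|U| = (\gamma \pm \mu/2) n$ with failure probability $\exp(-\Omega(\mu^2 n))$.

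\smallskip
\noindent\textbf{Condition~\ref{res:ii}.} Fix $F\subseteq\partial H$ with $|F|\leq h$ and set $d := \deg_H(F)/n$. Introduce the slack
\[
	Y_F \;:=\; \deg_H(F,U) - (d-\mu)|U| \;=\; \sum_{v\in V(H)} \mathbf{1}[v\in U]\bigl(\mathbf{1}[v\in N_H(F)] - (d-\mu)\bigr),
\]
which is a sum of $n$ independent bounded contributions with $\expectation[Y_F] = \gamma d n - (d-\mu)\gamma n = \gamma\mu n$. Hoeffding then yields $\prob[Y_F < 0] \leq \exp(-\Omega(\gamma^2 \mu^2 n))$. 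A union bound over at most $n^{h(k-1)}$ choices of $F$ is absorbed.

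\smallskip
\noindent\textbf{Condition~\ref{res:iii}.} Fix $e\in H$. Here the obstacle appears: the random variable $\degkdos_H(e,U)$ is a degree-$k$ polynomial in the independent indicators $\mathbf{1}[v\in U]$, and is \emph{not} a sum of independent Bernoullis, so Chernoff does not apply directly. The resolution is bounded differences. Flipping a single vertex $v\in V(H)\setminus e$ changes $\degkdos_H(e,U)$ by at most the number of copies of $\kdos$ in $H$ through $e$ that use $v$, and this is at most $k\binom{n-k-1}{k-1}=O(n^{k-1})$. McDiarmid's inequality then gives
\[
    \bigl|\degkdos_H(e,U) - \expectation[\degkdos_H(e,U)]\bigr| \;\leq\; \mu' n^k
\]
with probability $1-\exp(-\Omega(\mu'^2 n))$ for any $\mu' \gg n^{-1/2}$. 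Choose $\mu'\ll\mu$. Since $\expectation[\degkdos_H(e,U)] = \gamma^k \degkdos_H(e) + O(n^{k-1})$ and, conditional on the control of $|U|$ from Step~1, $\binom{|U|-k}{k}/\binom{n-k}{k} = \gamma^k \pm O(\mu)$, a routine rearrangement shows that the bound of~\ref{res:iii} holds with room to spare. Union bound over the at most $n^k$ edges $e$.

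\smallskip
\noindent\textbf{Main obstacle.} The nontrivial point is condition~\ref{res:iii}: the count of $\kdos$-extensions of $e$ inside the random set $U$ is a multilinear polynomial of degree~$k$ in the vertex-exposure indicators, so independent-sum Chernoff is unavailable. The key observation that unlocks the argument is that every individual vertex has influence only $O(n^{k-1})$, i.e., polynomially smaller than the typical value $\Theta(n^k)$, so a bounded-differences inequality (McDiarmid, or equivalently Azuma--Hoeffding on the vertex-exposure martingale) yields concentration of relative order $n^{-1/2}$, which is more than enough to beat the polynomial-size union bound over $e\in H$ under $1/n\ll\mu$.
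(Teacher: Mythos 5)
Your proposal does not prove the statement it was supposed to prove. The statement is Chernoff's inequality for a binomial random variable $X \sim \text{Bin}(n,p)$, which in the paper is not proved at all but quoted from \cite[Theorem 2.1]{JLR2000}. What you have written is instead a proof sketch of the Reservoir Lemma (\cref{lem:reservoir}), i.e.\ the result that \emph{applies} Chernoff's inequality: you choose $U$ by independent inclusion with probability $\gamma$ and verify properties \ref{res:i}--\ref{res:iii}. Worse, your argument explicitly invokes \cref{theorem:chernoff} in the treatment of \ref{res:i} (and implicitly, via Hoeffding, in \ref{res:ii}), so read as a proof of \cref{theorem:chernoff} it is circular. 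A genuine proof of the stated inequality would proceed by the exponential moment method: bound $\Pr(X-\expectation[X]>\alpha)$ by $e^{-t\alpha}\expectation[e^{t(X-\expectation[X])}]$, estimate the binomial moment generating function, optimise over $t$, and use the hypothesis $\alpha < 3\expectation[X]/2$ to obtain the constant $3$ in the exponent $-\alpha^2/(3\expectation[X])$; the lower tail is handled analogously and the two bounds are combined. None of these steps appears in your write-up, so as it stands the assigned statement is entirely unproved.

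As a side remark, viewed as an argument for \cref{lem:reservoir} your sketch follows essentially the same route as the paper's proof: random selection, Chernoff-type concentration for \ref{res:i} and \ref{res:ii}, and McDiarmid's bounded-differences inequality (\cref{theorem:mcdiarmid}) for \ref{res:iii}, since $\degkdos_H(e,U)$ is a degree-$k$ polynomial in the inclusion indicators with per-vertex influence $O(n^{k-1})$, followed by polynomial union bounds over the at most $n^{h(k-1)}$ families $F$ and $n^k$ edges $e$. But that is a different lemma; it does not address the statement you were asked to prove.
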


\begin{theorem}[McDiarmid's inequality {\cite{McDiarmid1989}}] \label{theorem:mcdiarmid} 
	Suppose $X_1, \dotsc, X_m$ are independent Bernoulli random variables and $b_1, \dotsc, b_m \in [0, B]$.
	Suppose $X$ is a real-valued random variable determined by $X_1, \dotsc, X_m$ such that changing the outcome of $X_i$ changes $X$ by at most $b_i$ for all $1 \leq i \leq m$.
	Then, for all $\lambda > 0$, we have
	\[ \Pr \left(|X - \expectation[X] | > \lambda \right) \leq 2 \exp \left( - \frac{2 \lambda^2}{B \sum_{i=1}^m b_i} \right).  \]
\end{theorem}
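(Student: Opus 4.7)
The plan is to construct $U$ by random sampling: include each vertex of $V(H)$ independently with probability $\gamma$, and then show that each of the three defining properties of a $(\gamma,\mu,h)$-reservoir fails with probability $o(1)$, so a good choice of $U$ exists.

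First I would handle property~\ref{res:i}: since $|U|\sim\mathrm{Bin}(n,\gamma)$, Chernoff's inequality (\cref{theorem:chernoff}) gives $|U|=(\gamma\pm\mu/2)n$ with probability $1-o(1)$, using $1/n\ll\mu$.

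For property~\ref{res:ii}, fix a family $F\subseteq\partial H$ with $|F|\le h$ and let $D:=\deg_H(F)$. The random variable $\deg_H(F,U)$ counts the vertices of the common neighbourhood of $F$ that land in $U$, so it is distributed as $\mathrm{Bin}(D,\gamma)$ with expectation $\gamma D$. If $D\le 2\mu n$, then on the event $|U|\le(\gamma+\mu/2)n$ the inequality $\deg_H(F,U)\ge(D/n-\mu)|U|$ is essentially automatic (the right-hand side is at most $\mu|U|$, which is comfortably beaten by an easy Chernoff tail). If $D>2\mu n$, another application of \cref{theorem:chernoff} shows that $\deg_H(F,U)\ge\gamma D-\tfrac{\mu}{4}\gamma n$ with probability at least $1-2\exp(-c\mu^2 n)$ for some $c=c(\gamma)>0$; combined with the bound on $|U|$ this yields $\deg_H(F,U)\ge(D/n-\mu)|U|$. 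Since the number of families $F$ with $|F|\le h$ is at most $(h+1)n^{(k-1)h}$, which is polynomial in $n$, a union bound together with $1/n\ll\mu$ shows that~\ref{res:ii} holds for every such $F$ with probability $1-o(1)$.

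The main obstacle is property~\ref{res:iii}, because $\degkdos_H(e,U)$ is a sum of indicators that are \emph{not} independent: each copy of $\kdos$ through $e$ depends on its $k$ ``opposite-side'' vertices. I will use McDiarmid's inequality (\cref{theorem:mcdiarmid}), viewing $\degkdos_H(e,U)$ as a function of the $n$ independent Bernoullis $(X_v)_{v\in V(H)}$ recording membership in $U$. Changing a single $X_v$ can only affect those copies of $\kdos$ through $e$ that use $v$ as one of their $k$ non-$e$ vertices, and the number of such copies is at most $\binom{n-k-1}{k-1}$; hence each $b_v\le B:=\binom{n-k-1}{k-1}$ and $\sum_v b_v\le n\binom{n-k-1}{k-1}$. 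The expectation satisfies $\expectation[\degkdos_H(e,U)]=\gamma^k\degkdos_H(e)$, while $\binom{|U|-k}{k}=(1\pm o(1))\gamma^k\binom{n-k}{k}$ on the high-probability event from~\ref{res:i}. Setting $\lambda=\tfrac{\mu}{2}\gamma^k\binom{n-k}{k}$ in \cref{theorem:mcdiarmid} gives a failure probability of at most $2\exp(-c'\mu^2 n)$ for some $c'=c'(k,\gamma)>0$, since $\lambda^2/(B\sum b_v)=\Theta(\mu^2 n)$. Taking a union bound over the at most $\binom{n}{k}$ edges $e\in H$ and combining with the bound on $|U|$ yields~\ref{res:iii} for all $e$ with probability $1-o(1)$.

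Finally, a union bound over the three events shows that with positive probability the random set $U$ satisfies~\ref{res:i}--\ref{res:iii} simultaneously, so such a $U$ exists. The critical quantitative input is $1/n\ll\mu$, which makes all the concentration tail bounds of order $\exp(-\Omega(\mu^2 n))$ overwhelm the polynomial union bounds for both~\ref{res:ii} and~\ref{res:iii}.
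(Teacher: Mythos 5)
Your proposal does not prove the stated result: the statement in question is McDiarmid's bounded-differences inequality (Theorem~\ref{theorem:mcdiarmid}) itself, which the paper quotes from the literature as a black-box tool and does not prove. What you have written is instead a proof sketch of the Reservoir Lemma (Lemma~\ref{lem:reservoir}), and it \emph{invokes} Theorem~\ref{theorem:mcdiarmid} as a given; relative to the task this is circular, since the concentration inequality you were asked to establish is assumed rather than derived. A genuine proof would, for example, pass to the Doob martingale $Z_i=\expectation[X\mid X_1,\dotsc,X_i]$ (with $Z_0=\expectation[X]$ and $Z_m=X$), verify from the hypothesis that the increments satisfy the bounded-differences property $|Z_i-Z_{i-1}|\le b_i$, apply Hoeffding's lemma to each conditional increment (as in Azuma's inequality, but with the two-sided range giving the better constant) to obtain $\prob\bigl(|X-\expectation[X]|>\lambda\bigr)\le 2\exp\bigl(-2\lambda^2/\sum_{i=1}^m b_i^2\bigr)$, and finally use $b_i\le B$, hence $\sum_{i=1}^m b_i^2\le B\sum_{i=1}^m b_i$, to arrive at the bound in the stated form. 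None of these steps appears in your write-up, so as a proof of Theorem~\ref{theorem:mcdiarmid} it has a complete gap.

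As a side remark: judged as an argument for Lemma~\ref{lem:reservoir}, your sketch follows essentially the same route as the paper's own proof of that lemma --- random inclusion with probability $\gamma$, Chernoff's inequality for \ref{res:i} and \ref{res:ii}, McDiarmid's inequality for \ref{res:iii}, and polynomial union bounds beaten by tails of order $\exp(-\Omega(\mu^2 n))$ --- but that is a different statement from the one under review.
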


\begin{proof}[Proof of Lemma~\ref{lem:reservoir}]
	Choose a set $U \subseteq V(H)$ randomly by independently including each vertex of $V(H)$ with probability $p = \gamma$.
	With non-zero probability $U$ will satisfy all of the properties \ref{res:i}--\ref{res:iii} simultaneously, which shows the desired set $U$ exists.
	
	Indeed, $\expectation[|U|] = p n=\gamma n$.
	Thus, using Chernoff's inequality (Theorem~\ref{theorem:chernoff}) with $\alpha = (n^{1/3} \gamma)^{-1}$ we get that $|U| = \gamma n \pm n^{2/3}$ fails to hold with probability at most $2 \exp(- n^{1/3} / (3 \gamma))$.
	Since $n$ is sufficiently large, $|U| = \gamma n \pm n^{2/3}$ holds with probability at least $1 - 1/n$ and we will assume those bounds on $|U|$ from now on.
	Note also this implies~\ref{res:i} holds for $U$.
	
	Now we verify~\ref{res:ii} holds.
	Let $F \subseteq \partial H$ of size at most $h$ and note that $\expectation[\deg_H(F, U)] = p \deg_H(F)$.
	If $\deg_H(F, U) < \mu n$ then there is nothing to show, so we assume otherwise.
	In particular, $\expectation[\deg_H(F, U)] \ge \gamma \mu n$.
	If $\deg_H(F, U) < (\deg_H(F)/n - \mu)|U|$, then $\deg_H(F, U) \leq (\deg_H(F)/n - \mu)(\gamma n + n^{2/3}) \leq \expectation[\deg_H(F, U)] - \mu \gamma n / 2$ since $n$ is large.
	Apply Chernoff's inequality with $\alpha = \mu \gamma n / 2 \leq 3 \expectation[\deg_H(F, U)] / 2$ to get
	\begin{align*}
	\prob[ \deg_H(F, U) < (\deg_H(F)/n - \mu)|U| ]
	& \leq \prob[ |\deg_H(F, U) - \expectation[\deg_H(F, U)] | > \mu \gamma n / 2  ] \\
	& \leq 2 \exp \left(- \frac{(\mu \gamma)^2 n}{12} \right).
	\end{align*}
	Since $|\partial H| \leq n^{k-1}$ and $|F| \leq h$, there are at most $n^{h(k-1)}$ possible choices for $F$.
	Then a union bound shows that~\ref{res:ii} fails to hold with probability at most $2 n^{h(k-1)} \exp(-(\beta \gamma)^2 n/12) < 1/n$,
	where the last inequality holds since $n$ is large.
	
	To see~\ref{res:iii}, fix an edge $e \in H$.
	If $\degkdos_H(e) < \mu \binom{n-k}{k}$ then there is nothing to show, so assume otherwise.
	Let $X = \degkdos_H(e, U)$ and note that $\expectation[X] = p^k \degkdos_H(e)$.
	Since $|U| = p n \pm n^{2/3}$ and $1/n \ll 1/k$, we have $\binom{|U|-k}{k} = (1 + o(1)) \gamma^k \binom{n}{k}$.
	The presence of a vertex in $U$ can affect $X$ by at most $n^{k-1}$.
	Thus, we can apply McDiarmid's inequality (Theorem~\ref{theorem:mcdiarmid}) with $m = n$ and $B = b_i = n^{k-1}$ for all $1 \leq i \leq n$ to see that
	\begin{align*}
	\prob\left[ \deg^K_H(e, U) < (\deg^K_H(e)/\binom{n-k}{k} - \mu)\binom{|U|-k}{k} \right]
	& \leq \prob\left[ X < \expectation[X] - \mu \gamma^k \binom{n}{k} / 2 \right] \\
	& \leq 2 \exp \left( - \mu^2 \gamma^{2k} n^{2k} / ( 2 k^{2k} n^{2k-1} ) \right),
	\end{align*}
	where in the last inequality we also used $\binom{n}{k} \geq (n/k)^k$.
	The last term is less than $1/{n^{k+1}}$ since $n$ is sufficiently large.
	Since there are at most $n^k$ edges in $H$, we see~\ref{res:iii} fails with probability at most $1/n$, as required.
\end{proof}

\section{Connections}\label{section:connecting}

For this section, the following notion will be essential.
We say $k$-graph  $H$ is \emph{$\ell$-large} if every two $f,f'\in\partial H$ have at least $\ell$ common neighbours.
For instance, $k$-graphs $H$ on $n$ vertices with minimum codegree at least $(1/2+\gamma)n$ are $2 \gamma n$-large,
and $(\varrho,2,\eps)$-typical graphs $H$ on $n$ vertices are $(\varrho^2 - \eps)n$-large.
Note that a $k$-graph can be $\ell$-large and have \emph{isolated vertices} (i.e. vertices which do not lie in any edge), since the property only says something about tuples in $\partial H$.
For $U\subseteq V(H)$, we say that $H$ is $(\ell, U)$-large if every two $f,f'\in\partial H$ have at least $\ell$ common neighbours in $U$ (Later, $U$ will be a reservoir).

The main result of the current section is \cref{lemma:strengthenedconnectinglemma}, which essentially says that in any $\Omega(n)$-large $k$-graph $H$ we can connect any two elements of $\partial H$ by a walk (actually, many such walks) of length exactly $\ell$, where $\ell$ is a number only depending on~$k$.
We observe that \cref{lemma:strengthenedconnectinglemma} can be seen as a strengthening of the `Connecting lemma' of R\"odl, Ruci\'nski and Szemer\'edi~\cite[Lemma 2.4]{RRS08}. For our approach, however, it is crucial that we can control the precise length of the walk (instead of only having an upper bound). We will also control the number of internal vertices of the walk, and the order of the elements $f,f'$ we are connecting.

Recall that $\partial^\circ H$ denotes the ordered shadow of $H$, and that the interior $\int{W}$ of a walk $W$ corresponds to the set of vertices $V(W) \setminus (\sta{W} \cup \ter{W})$.

\begin{lemma}[Connecting Lemma]\label{lemma:strengthenedconnectinglemma}
   For integers $k, \ell$ with $k \ge 2$ and $\ell \ge (2k+1)\lfloor k /2 \rfloor + 2k$, there exists $q \leq \ell$ with the following property.
   Let $1/n \ll \gamma \ll 1/k, 1/\ell$. Let $H$ be a $k$-graph on $n$ vertices which is $(2 \gamma n, U)$-large for some $U \subseteq V(H)$,
   and let $\mathbf x, \mathbf x' \in \partial^\circ H$.
   Then there are at least $(\gamma n)^{q}$ many walks going from $\mathbf x$ to $\mathbf x'$,
   each of length $\ell$ and with $q$ internal vertices all from $U \setminus (\mathbf x \cup \mathbf x')$.
\end{lemma}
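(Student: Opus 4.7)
The goal is to produce a tight path $W$ of length exactly $\ell$ from $f=(a_1,\dots,a_{k-1})$ to $f'=(b_1,\dots,b_{k-1})$ whose internal vertices all lie in $U\setminus(f\cup f')$; this will give $q = \ell - k + 1 \leq \ell$. The plan is a meet-in-the-middle greedy extension: extend the tight path from $f$ forward by $\ell_1$ vertices and from $f'$ backward by $\ell_2 = \ell - \ell_1$ vertices, then glue the two halves at a common middle $(k-1)$-tuple. At each single extension step, the $(\gamma n,U)$-large hypothesis supplies at least $\gamma n - O(\ell)$ admissible next vertices in $U\setminus(f\cup f')$ (only the $O(\ell)$ previously used vertices need to be excluded), so each side produces at least $(\gamma n/2)^{\ell_i}$ tight paths.

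For the gluing step, let $P_1(g)$ (respectively $P_2(g)$) be the number of forward (backward) tight paths ending (resp.\ starting) at a given $g\in\partial^o H$. Each matched pair with disjoint interiors yields a walk of length $\ell_1+\ell_2 = \ell$ from $f$ to $f'$, so the total walk count is at least $\sum_g P_1(g)P_2(g)$ minus a lower-order correction for interiorly-intersecting pairs (which is small since each path uses only $O(\ell)$ vertices out of $n$). Since $\sum_g P_i(g) \geq (\gamma n/2)^{\ell_i}$ and $|\partial^o H|\leq n^{k-1}$, I would aim to show $\sum_g P_1(g)P_2(g) \geq \gamma^{\ell} n^{\ell - k + 1}$ by a Cauchy--Schwarz or convexity argument, provided the supports of $P_1$ and $P_2$ both cover a constant fraction of $\partial^o H$. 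The resulting count comfortably exceeds $\alpha n^{q}$ for $\alpha$ suitably small.

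The main obstacle is precisely that support claim: the set of $(k-1)$-tuples reachable from $f$ by a forward greedy extension (and symmetrically, those from which one can reach $f'$ going backward) must fill a constant proportion of $\partial^o H$ before the Cauchy--Schwarz bound becomes nonvacuous. To achieve this I would iterate the one-step expansion afforded by $(\gamma n,U)$-largeness for about $2k-1$ initial ``spreading'' steps on each side, ensuring short greedy extensions already reach a positive-density subset of $\partial^o H$; I expect the threshold $\ell\geq (2k+1)\lfloor k/2\rfloor + 2k$ encodes exactly how many mixing steps are needed on each side, together with a buffer for the first $k-1$ steps, where the sliding $(k-1)$-tuple still straddles the frozen endpoints of $f$ or $f'$. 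Quantifying this mixing uniformly in $k$, and carefully excluding pathological interior collisions while keeping all vertices inside $U\setminus(f\cup f')$, is where the bulk of the technical bookkeeping will live.
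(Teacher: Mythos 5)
Your proposal takes a genuinely different route from the paper and, as written, it has a real gap. The paper does not run a symmetric meet-in-the-middle argument. Instead it first greedily builds a single long tight path $P_1$ starting from $f'$ \emph{read in reverse}, and then closes the gap between $f$ and the end of $P_1$ by choosing $k$ bridging vertices $a_1,\dotsc,a_k$ one at a time, each as a \emph{common} neighbour (in $U$) of one sliding window coming from the $f$-side and one coming from the $f''$-side. This directly exploits that $(\gamma n,U)$-largeness is a codegree condition on \emph{pairs} of $(k-1)$-tuples, not merely a single-tuple codegree bound. Finally, a short explicit ``swap'' walk (Lemmas~\ref{lemma:swap1} and~\ref{lemma:swap}) of length $(2k+1)\lfloor k/2\rfloor$ reverses the order of $(a_1,\dotsc,a_k)$ so that the forward piece, the reversal walk, and the reversed $P_1$ concatenate; the threshold $\ell\ge (2k+1)\lfloor k/2\rfloor+2k$ is exactly the length of this swap walk plus two short connectors, not a mixing time.

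The gap in your version is the step you yourself flag as ``the main obstacle'': you need the set of $(k-1)$-tuples reachable from $f$ forward and the set reachable from $f'$ backward to \emph{intersect} in order for $\sum_g P_1(g)P_2(g)$ to be positive. Iterating greedy extension does give each reachable set size at least $(\gamma n/2)^{k-1}$, hence positive density $(\gamma/2)^{k-1}$ in $\partial^o H$, but that density can be far below $1/2$, and two positive-density subsets of $\partial^o H$ need not meet. The $(\gamma n,U)$-large hypothesis alone provides no mixing or expansion that would force these two sets to overlap after a bounded number of steps; it controls codegrees, not how the reachable sets spread across $\partial^o H$. Put differently, your plan only ever uses the ``diagonal'' consequence of largeness (every single $(k-1)$-set has $\ge\gamma n$ neighbours in $U$), and discards the off-diagonal content of the hypothesis, which is precisely what is needed to glue the two halves. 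If you want to salvage a meet-in-the-middle strategy, you would still have to reintroduce a step that chooses each gluing vertex simultaneously in the $U$-neighbourhood of a tuple from each side (and then deal with the resulting orientation mismatch), which is in effect what the paper's iterated common-neighbour choice plus the swap walk accomplish.
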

To prove \cref{lemma:strengthenedconnectinglemma}, it will be useful to find a walk which swaps the order in which the vertices of a given edge appear.
This will be achieved in the following two short lemmas.
The first lemma effectively swaps the position of two vertices in an ordered edge ($b_j$ and $b_{k-j+1}$ in the statement below).

\begin{lemma}\label{lemma:swap1}	
	Let $k\ge 2$ and let $(b_1, ..., b_k)$ be an ordered edge in a $k$-graph $H$.
	Let $1\le j\le \lfloor k/2\rfloor$, and let $u \in N_H( b_1, \dotsc, b_{j-1}, b_{j+1}, \dotsc, b_k)\cap N_H( b_1, \dotsc, b_{k-j}, b_{k-j+2}, \dotsc, b_k )$.
	Then $H[\bigcup_{1\le i\le k}\{b_i\}\cup \{u\}]$ contains a walk of length $2k+1$ from $(b_1, \dotsc, b_k)$ to
	\[(b_1,\dotsc, b_{j-1}, b_{k-j+1}, b_{j+1}, b_{j+2}, \dotsc, b_{k-j}, b_j, b_{k-j+2}, \dotsc, b_k).\]
\end{lemma}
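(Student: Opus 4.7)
The plan is to exhibit an explicit walk $W = (w_1, \dotsc, w_{3k})$ of length $2k+1$ on the vertex set $\{b_1, \dotsc, b_k, u\}$, built using only the three edges
\begin{align*}
    E_0 := \{b_1, \dotsc, b_k\},\quad
    E_1 := (\{u\}\cup E_0) \setminus \{b_j\},\quad
    E_2 := (\{u\}\cup E_0) \setminus \{b_{k-j+1}\},
\end{align*}
all of which lie in $H$ by the hypotheses on $u$. The prescribed start forces $w_i = b_i$ for $1 \le i \le k$, and the task is to choose the remaining $2k$ coordinates of $W$ so that every $k$ consecutive positions form one of $E_0, E_1, E_2$ and so that $(w_{2k+1},\dotsc,w_{3k})$ equals the claimed swapped tuple.

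The key idea will be to take $W$ to be ``cyclic,'' meaning $w_i = w_{i-k}$, at every position $i > k$ except at the three exceptional ones
\[
w_{k+j} := u,\qquad w_{2k-j+1} := b_j,\qquad w_{2k+j} := b_{k-j+1}
\]
(these three indices are pairwise distinct since $1 \le j \le \lfloor k/2\rfloor$). The rationale is that a cyclic shift preserves the edge-set of the current window, while each exception replaces exactly one vertex of the window and therefore implements one of the transitions $E_0 \to E_1 \to E_2 \to E_0$.

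To verify this works, I would check by induction on the window index $i$ that the window at positions $i,\dotsc,i+k-1$ equals $E_0$ for $1\le i\le j$, equals $E_1$ for $j+1\le i\le k-j+1$, equals $E_2$ for $k-j+2\le i\le k+j$, and equals $E_0$ again for $k+j+1\le i\le 2k+1$; the four intervals cover all windows since their lengths sum as $j + (k-2j+1) + (2j-1) + (k-j+1) = 2k+1$. Unwinding the cyclic rule then shows that $(w_{k+1},\dotsc,w_{2k}) = (b_1,\dotsc,b_{j-1}, u, b_{j+1},\dotsc,b_{k-j}, b_j, b_{k-j+2},\dotsc,b_k)$, and hence $(w_{2k+1},\dotsc,w_{3k})$ agrees with this tuple except that the exception at $2k+j$ overrides the $u$ in slot $j$ by $b_{k-j+1}$, which is precisely the prescribed target end.

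The main obstacle will be identifying the three transition positions correctly: the constraint is that at each window-shift the vertex dropped must coincide with the vertex being replaced, and solving this arithmetic forces the exceptional positions to be exactly $k+j$, $2k-j+1$, and $2k+j$; everything else then reduces to direct set-equality checks.
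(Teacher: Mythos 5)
Your proposal is correct and arrives at exactly the same walk as the paper's one-line proof (the paper simply writes the walk out as a sequence of $3k$ vertices and declares ``it suffices to consider'' it). Your ``cyclic-with-exceptions'' description and the window-by-window verification add useful detail, but the underlying construction is identical.
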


\begin{proof}
	It suffices to consider the walk (each $k$ consecutive vertices form an edge)
		\[ b_1 \dotsb b_k b_1 \dotsb b_{j-1} u b_{j+1} \dotsb b_{k-j} b_j b_{k-j+2} \dotsb b_k b_1 \dotsb b_{j-1} b_{k-j+1} b_{j+1} \dotsb b_{k-j} b_j b_{k-j+2} \dotsb b_k,\]
	which uses $3k$ vertices and thus has length $2k+1$.
\end{proof}	

\begin{lemma}\label{lemma:swap}	
	Let $k\ge 2$ and let $(a_1, \dotsc, a_k)$ be an edge in a $k$-graph $H$.
	For each $j\le \lfloor k/2\rfloor$, let $u_j\in N_H( a_1, \dotsc, a_{j-1}, a_{j+1}, \dotsc a_k )\cap N_H(a_1, \dotsc, a_{k-j}, a_{k-j+2}, \dotsc a_k )$.
	Then $H[\bigcup_{1\le i\le k}\{a_i\}\cup\bigcup_{1\le j\le \lfloor k/2\rfloor} \{u_j\}]$ contains a walk of length $\lfloor k/2\rfloor (2k+1)$ from $(a_1, \dotsc, a_k)$ to  $(a_k, \dotsc, a_1)$.	
\end{lemma}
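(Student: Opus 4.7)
The plan is to iterate Lemma~\ref{lemma:swap1} exactly $\lfloor k/2 \rfloor$ times, where the $j$-th application transposes the entries in positions $j$ and $k-j+1$ of the current tuple, using $u_j$ as the auxiliary vertex. After all $\lfloor k/2 \rfloor$ transpositions are carried out in order $j = 1, 2, \dotsc, \lfloor k/2 \rfloor$, the tuple $(a_1, \dotsc, a_k)$ has been reversed to $(a_k, \dotsc, a_1)$, and concatenating the walks gives the desired walk in $H$.

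First I would set $W^{(0)}$ to be the trivial walk at the tuple $(a_1, \dotsc, a_k)$, and define inductively the walk $W^{(j)}$ as the concatenation of $W^{(j-1)}$ with the walk of length $2k+1$ produced by Lemma~\ref{lemma:swap1} applied to the terminal tuple of $W^{(j-1)}$ with auxiliary vertex $u_j$. To apply Lemma~\ref{lemma:swap1} at step $j$, I must verify that (a) the terminal tuple of $W^{(j-1)}$ still has $a_j$ in position $j$ and $a_{k-j+1}$ in position $k-j+1$, and (b) $u_j$ is a common neighbour of the two $(k-1)$-sets obtained by deleting those two entries from the current tuple. Property (a) follows by a straightforward induction on $j$, since the earlier transpositions only touched positions $1, \dotsc, j-1$ and $k-j+2, \dotsc, k$. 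For property (b), note that (a) and the fact that the underlying set of the current tuple is $\{a_1, \dotsc, a_k\}$ ensure that removing position $j$ (resp.\ position $k-j+1$) yields the set $\{a_1, \dotsc, a_k\} \setminus \{a_j\}$ (resp.\ $\{a_1, \dotsc, a_k\} \setminus \{a_{k-j+1}\}$). The hypothesis on $u_j$ then says precisely that $u_j$ is a common neighbour of these two sets, so Lemma~\ref{lemma:swap1} applies.

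It remains to account for the length and vertex set. Each application of Lemma~\ref{lemma:swap1} adds $2k+1$ edges, so the total length is $\lfloor k/2 \rfloor (2k+1) = 2k \lfloor k/2 \rfloor + \lfloor k/2 \rfloor$, as required. Since Lemma~\ref{lemma:swap1} at step $j$ produces a walk contained in $H[\{a_1, \dotsc, a_k\} \cup \{u_j\}]$, the whole walk $W^{(\lfloor k/2 \rfloor)}$ lies inside $H\bigl[\bigcup_{1 \leq i \leq k} \{a_i\} \cup \bigcup_{1 \leq j \leq \lfloor k/2 \rfloor} \{u_j\}\bigr]$.

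I do not anticipate a genuine obstacle: the argument is a clean induction and the only thing to check carefully is that the tuples produced by Lemma~\ref{lemma:swap1} really act as transpositions of positions $j$ and $k-j+1$, which is immediate by inspection of the formula in its statement. The arithmetic $\lfloor k/2 \rfloor \cdot (2k+1) = 2k\lfloor k/2 \rfloor + \lfloor k/2 \rfloor$ matches the target length, confirming that no correction terms are needed.
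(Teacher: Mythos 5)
Your proposal is correct and takes exactly the same approach as the paper: iterate Lemma~\ref{lemma:swap1} for $j = 1, \dotsc, \lfloor k/2\rfloor$ to transpose positions $j$ and $k-j+1$ at each step, concatenating the resulting walks. The paper states this in one sentence, while you verify the two inductive invariants (positions $j$ and $k-j+1$ are still $a_j$ and $a_{k-j+1}$; the deleted-position sets match the sets in the hypothesis for $u_j$ since $N_H(\cdot)$ depends only on the underlying set) explicitly, which is a reasonable level of added detail but not a different argument.
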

\begin{proof}
	We use Lemma~\ref{lemma:swap1} successively, for all $j\le \lfloor k/2\rfloor$, thus swapping the vertices $a_j$ and $a_{k-j+1}$ in the walk until we reach $(a_k, \dots, a_1)$.
	This gives a walk of length $\lfloor k/2\rfloor (2k+1)$.
\end{proof}

Now we can prove Lemma~\ref{lemma:strengthenedconnectinglemma}.
\begin{proof}[Proof of Lemma~\ref{lemma:strengthenedconnectinglemma}]
Let $\mathbf x=(x_1,\dotsc,x_{k-1})$ and $\mathbf x'=(x'_1,\dotsc,x'_{k-1})$, and set $\ell_0 := (2k+1)\lfloor k /2 \rfloor + 2k$.
	Let $\mathbf x'_R = (x'_{k-1}, \dotsc, x'_1)$ be the reverse of the tuple $\mathbf x'$.
	Greedily construct a tight path $P_1$ of length $\ell-\ell_0$,
	starting at $\mathbf x'_R$, ending at some $\mathbf y=(y_1, \dotsc, y_{k-1})$,
	and using only vertices in 
	$U \setminus \mathbf x$.
	This can be done: as every $(k-1)$-set has at least $ 2\gamma n$ neighbours in $U$,
	at each step we need to avoid at most $|\mathbf x \cup \mathbf x'| - (\ell - \ell_0) \leq 2k+\ell$ vertices, and
	so $1/n \ll 1/k, 1/\ell$ implies there are at least $\gamma n$ choices at each step.
	Set	 $Z:=V(P_1) \setminus \mathbf x$ and $q: = k+\lfloor k/2 \rfloor +\ell-\ell_0= k+\lfloor k/2 \rfloor + |Z|$.
    
    Building on $P_1$, we will first construct a single walk
    as promised in the lemma, afterwards we will estimate in how many ways this can be done. 
	Let $a_1\in (N_H(\mathbf x)\cap N_H(\mathbf x'')\cap U) \setminus (Z \cup \mathbf x \cup \mathbf x')$.
	Having defined $a_1, \dotsc, a_j$ for some $j \in [k-1]$, we choose an arbitrary unused vertex 
\[a_{j+1}\in N_H(x_{j+1}, \dotsc, x_{k-1}, a_1, \dotsc, a_j) \cap N_H(y_{j+1}, \dotsc, y_{k-1}, a_1, \dotsc, a_j)\cap U.\]
	Clearly $P_2=x_1 \dotsb x_{k-1} a_1 \dotsb a_{k-1} a_k$ and $P_3=a_k a_{k-1} \dotsb a_1 y_{k-1} \dotsb y_{1}$ are tight paths. 
	Applying Lemma~\ref{lemma:swap}, we find a walk $P_4$ which goes from $(a_1, \dotsc, a_{k})$ to $(a_k, a_{k-1}, \dots, a_1)$ only occupying unused vertices $u_1, \dotsc, u_{\lfloor k/2\rfloor}$ from $U$ (the  vertices $u_j$ exist because $H$ is $(2 \gamma n, U)$-large).Concatenating the walks $P_2$, $P_4$, $P_3$ and $P_1$ (the latter traversed in reverse order) gives a walk $W$ from $\mathbf x$ to $\mathbf x'$.
	Set $Q = Z\cup \{ a_i : 1 \leq i \leq k \} \cup \{ u_j : 1 \leq j \leq \lfloor k/2 \rfloor \}$.
	Then $Q = \int{W}$ and $|Q|  = q$, and the length of $W$ is $|E(P)|=k+2k\lfloor k/2\rfloor + \lfloor k/2\rfloor+k+\ell-\ell_0=\ell$,
	so $W$ is a walk which satisfies the required properties.
	
\begin{figure}[h!]
	\centering
	\begin{tikzpicture}[scale=.8]
	
	\draw node (x1l) at (.5,3.4)  {$x_1$};
	\draw node  (x2l) at (-1.6,1.5) {$x_2$};
	\draw node (y2l) at (4,3.4) {$y_2$};
	\draw node (y1l)  at (6,1.5){$y_1$};
	\draw node  (a1l) at (2,2.5){ $a_1$};
	\draw node  (a2l) at (.7,-.4){$a_2$};
	\draw node  (a3l) at (2.8,-2.1){$a_3$};
	\draw node (a4l) at (3.7,.3){$a_4$};

		\tikzstyle{every node}=[circle, draw, fill, inner sep=0pt, minimum width=4pt];
		
	\draw node (x1) at (.5,3)  {};
	\draw node  (x2) at (-1,1.5) {};
	\draw node (y2) at (4,3) {};
	\draw node (y1)  at (5.5,1.5){};
	\draw node  (a1) at (2,2){};
	\draw node  (a2) at (1,0){};
	\draw node  (a3) at (2.3,-2){};
	\draw node (a4) at (3.2,.3){};

		
		\triple{(x2)}{(x1)}{(a1)}{6pt}{1pt}{DarkDesaturatedBlue,opacity=0.8}{VerySoftBlue,opacity=0.2};
		\triple{(x2)}{(a1)}{(a2)}{6pt}{1pt}{DarkDesaturatedBlue,opacity=0.8}{VerySoftBlue,opacity=0.2};
		\triple{(y2)}{(y1)}{(a1)}{6pt}{1pt}{DarkDesaturatedBlue,opacity=0.8}{VerySoftBlue,opacity=0.2};
		\triple{(y2)}{(a2)}{(a1)}{6pt}{1pt}{DarkDesaturatedBlue,opacity=0.8}{VerySoftBlue,opacity=0.2};

		\triple{(a3)}{(a2)}{(a1)}{6pt}{1pt}{DarkDesaturatedBlue,opacity=0.8}{VerySoftBlue,opacity=0.2};
		
		\triple{(a2)}{(a1)}{(a4)}{6pt}{1pt}{DarkDesaturatedBlue,opacity=0.8}{VerySoftBlue,opacity=0.2};
		
		\triple{(a3)}{(a2)}{(a4)}{6pt}{1pt}{DarkDesaturatedBlue,opacity=0.8}{VerySoftBlue,opacity=0.2};

	\end{tikzpicture}
	\caption{This figure shows how to construct the walk $x_1x_2a_1a_2a_4a_3a_2a_1y_2y_1$ connecting $(x_1,x_2)$ with $(y_2,y_1)$ in the $3$-uniform case. }
\end{figure}
Note that by construction, each vertex in the interior of $P$ is chosen as an arbitrary unused vertex in the neighbourhood of one or two $(k-1)$-sets.	
	Since $H$ is $(2\gamma n, U)$-large, the common neighbourhoods in $U$ have size at least $2\gamma n$, and thus (here we use $1/n \ll 1/k, 1/\ell$) in every step we have at least $2 \gamma n - q \ge \gamma n$ possible choices in $U$.
	Thus by the previous discussion there are at least $(\gamma n)^q$ many different walks with the required properties.
\end{proof}

\section{Embedding large hypertrees}\label{section:embeddinglemma}

In this section, we  prove an embedding result (Lemma~\ref{lem:embedding}) that will allow us to embed a small rooted bounded degree tree into a dense $k$-partite graph, while at the same time controlling the location of the root and the bulk of the tree quite accurately.

Given positive integers $\ell_1\le \ell_2\le m$, and given a layering $\mathcal L=(L_1, \dotsc, L_{m})$ of a rooted $k$-tree $(T, \mathbf x)$,
we say that $V_{[\ell_1,\ell_2]}(\mathcal L):=\bigcup_{\ell_1\le i\le \ell_2} L_i$ is the {\em $[\ell_1, \ell_2]$-interval} of $T$. If the layering is clear from the context, we just write $V_{[\ell_1,\ell_2]}$.
If, moreover, $|L_i|\le M$ for each $\ell_1\le i\le\ell_2$, we say $V_{[\ell_1,\ell_2]}$ that is {\em $M$-bounded}.
We write $\mathbf x\in \partial^\circ T[V_{[\ell_1,\ell_2]}]$ if there is a $j\in [\ell_1, \ell_2-k+1]$ such that $\mathbf x=(x_1, ..., x_{k-1})\in \partial^\circ T$ with $x_i\in L_{j+i-1}$ for all $i\in [k-1]$.

We can now state the Embedding Lemma.

\begin{lemma}[Embedding Lemma]\label{lem:embedding}
	Let $\Delta, k \ge 2$, let $\ell \ge \lfloor k/2 \rfloor (2k+1) + 2k$ and let $1/n, \mu \ll \beta, \theta \ll 1/k, 1/\Delta, c, \gamma, d$.
	Let $H$ be a
	$\gamma n$-large 
	$k$-graph on $n$ vertices with a $(\gamma,\mu,2)$-reservoir $R$.
	Let $W_1,\dotsc,W_k\subseteq V(H)\setminus R$ be all pairwise disjoint, and  such that $d(W_1,\dotsc,W_k)\ge d$ and $|W_i|\ge cn$ for each $i\in [k]$.
	Let $(T,\mathbf x, \mathcal{L})$ be a layered $k$-tree on at most $\beta n$ vertices with $\mathcal{L} = (L_1, \dotsc, L_m)$ and $\Delta_1(T)\leq \Delta$.
	Then, for any $\theta$-extensible edge $e\in H$,
	$f\subseteq e$ of size $k-1$ and any ordering $\mathbf f$ of $f$, there exists an embedding $\varphi:V(T)\to f \cup R\cup W_1\cup\dotsb \cup W_k$ such that
	\begin{enumerate}[\upshape (E1)]
		\item\label{keylemma:1} $\varphi(\mathbf x)=\mathbf f$,
		\item\label{keylemma:2} $\phi^{-1}( R\cup f ) =\bigcup_{i=1}^\ell L_i$,
		\item\label{keylemma:3} $\phi(V_{[\ell +1, m]})\subseteq W_1\cup \dots\cup W_k$, with
	$|\varphi^{-1}(W_1)|\ge \dotsb \ge |\varphi^{-1}(W_k)|$,
		and
		\item\label{keylemma:4} $\varphi(e')$ is  $\theta$-extensible, for each $e' \in E(T[V_{[\ell +1, m]}])$.
	\end{enumerate}
\end{lemma}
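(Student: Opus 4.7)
The plan is to build $\varphi$ vertex by vertex in a valid order of $T$ that respects the layering $\mathcal{L}$, so that vertices of $L_j$ are embedded before those of $L_{j+1}$; the root is placed first by setting $\varphi(\vec r)=\vec f$. Before the greedy process begins, I fix a bijection $\sigma$ from the $k$-partition classes of $T$ to $\{W_1,\dotsc,W_k\}$ so that (E3) will hold at the end; such a $\sigma$ exists by \cref{proposition:boundedsizecolourclasses}, since the partition classes of $T$ have comparable sizes. Each later vertex $v\in L_j$ is then to be placed in $R$ if $j\le\ell$ (and $v\notin r$) and in $W_{\sigma(i_v)}$ if $j>\ell$, where $i_v$ is the $T$-partition class of $v$. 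Throughout, I maintain the invariant that every already-embedded edge of $T$ lying entirely in $V_{[\ell+1,m]}$ is $\theta$-extensible in $H$.

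The embedding of the \emph{head} $V_{[1,\ell]}\setminus r$ into $R$ is essentially greedy. For each head vertex $v$ processed in turn, the anchor $\varphi(\alpha(v))$ lies in $R\cup f$, and reservoir property~\ref{res:ii} applied to $F=\{\varphi(\alpha(v))\}$, combined with the $\gamma n$-largeness of $H$, yields $\deg_H(\varphi(\alpha(v)),R)\ge(\gamma-\mu)\gamma n$, which comfortably exceeds the $\le\beta n$ already-used vertices since $\beta,\mu\ll\gamma$. Only the bounded number $|V_{[1,\ell]}|\le\Delta^\ell$ of head vertices need be placed in this way (by \cref{proposition:boundedclusters}).

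The embedding of the \emph{body} vertices $V_{[\ell+k,m]}$ into the clusters is where the $\theta$-extensibility invariant plays its role. When a body vertex $v$ is to be placed, its parent edge $e_p$ has already been embedded as a $\theta$-extensible edge $\varphi(e_p)$ inside the clusters, and the single vertex $u\in e_p\setminus\alpha(v)$ lies in the same $T$-partition class as $v$ (so $\varphi(u)\in W_{\sigma(i_v)}$). Extensibility of $\varphi(e_p)$ provides $\theta\binom{n-k}{k}$ copies of $\kdos$ containing $\varphi(e_p)$, and projecting each to the slot of $u$ yields $\Omega(n)$ candidate images for $\varphi(v)$ which extend $\varphi(\alpha(v))$ to an edge of $H$. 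A counting argument using the density of $(W_1,\dotsc,W_k)$ together with \cref{lem:extensible} (to guarantee the new edge is itself $\theta$-extensible) shows that $\Omega(n)$ of these candidates lie in $W_{\sigma(i_v)}$; after subtracting the $O(\beta n)$ already-used vertices, a valid choice remains.

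The main technical obstacle is the \emph{interface}: vertices $v\in V_{[\ell+1,\ell+k-1]}$ whose parent edge is a bridge edge, so that $\alpha(v)$ is split between $R\cup f$ and the clusters, and neither the reservoir argument nor the extensibility argument applies directly. My plan to resolve this is to \emph{delay} the head embedding: first, I pre-embed the interface vertices (and the first body edges containing them) into $\theta$-extensible cluster edges produced by \cref{lem:extensible}, establishing (E4) for those initial body edges; then, I embed the head into $R\cup f$ using the Connecting Lemma (\cref{lemma:strengthenedconnectinglemma}) applied with $U=R$. Since reservoir property~\ref{res:ii} with $|F|=2$ gives that $H$ is $(\gamma' n,R)$-large for some $\gamma'=\Omega(\gamma^2)$, \cref{lemma:strengthenedconnectinglemma} produces the many walks of exact length $\ell$ in $R$ required to link $\vec f$ to each pre-placed interface tuple, and these walks are glued along the constant-sized head tree. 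Internal vertex disjointness of the walks poses no difficulty because only boundedly many walks are constructed and each application of the Connecting Lemma offers $\Omega(n^q)$ choices. The hypothesis $\ell\ge\lfloor k/2\rfloor(2k+1)+2k$ is exactly the condition required by the Connecting Lemma.
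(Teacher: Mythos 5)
Your high-level plan (root to $f$, head into $R$, crown into the clusters, treat the interface specially) matches the paper's, but two central steps are replaced by arguments that do not actually close the gaps, and in both cases the paper's proof uses machinery you have skipped.

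\textbf{Gap 1: the crown embedding.} You claim that $\theta$-extensibility of $\varphi(e_p)$, combined with a counting argument using the density of $(W_1,\dotsc,W_k)$, yields $\Omega(n)$ candidate images for $\varphi(v)$ \emph{inside $W_{\sigma(i_v)}$} that form a $\theta$-extensible edge. This does not follow. Extensibility is a global property of $e_p$ in $H$: the $\theta\binom{n-k}{k}$ copies of $K^{(k)}(2)$ may all use vertices outside $W_1\cup\dotsb\cup W_k$, so projecting to the slot of $u$ gives $\Omega(n)$ candidates somewhere in $V(H)$, not in $W_{\sigma(i_v)}$. Likewise, the density $d(W_1,\dotsc,W_k)\geq d$ is an average statement and gives no codegree bound for the \emph{specific} $(k-1)$-tuple $\varphi(\alpha(v))$; that tuple may have zero neighbours into $W_{\sigma(i_v)}$, and your greedy process can stall. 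The paper avoids this by a preparatory cleaning step: it first discards the (few, by \cref{lem:extensible}) non-extensible edges of $H[W_1,\dotsc,W_k]$, then applies \cref{lem:mincodeg} to pass to a subgraph $H'$ in which \emph{every} shadow tuple has codegree at least $\delta n$ inside the clusters, and then extends greedily in $H'$ via \cref{lem:emb extension}. Maintaining, as an invariant, that each newly embedded shadow tuple lies in $\partial H'$ is exactly what makes the crown extension work; without it the argument breaks down.

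\textbf{Gap 2: the head and interface.} You propose to pre-place the interface tuples in extensible cluster edges, then link $\vec f$ to each of them by walks in $R$ from the Connecting Lemma, ``glued along the constant-sized head tree.'' But $T[V_{[1,\ell+k-1]}]$ is a tree with branching: two interface tuples may descend from a common ancestor at some intermediate layer, and then the corresponding walks from $\vec f$ must \emph{agree} on the image of that ancestor. The Connecting Lemma produces independent walks with no such consistency; having $\Omega(n^q)$ choices and boundedly many walks lets you make them disjoint, not consistent. The paper's \cref{lemma:embeddingtrunk} is precisely the device that resolves this: it builds an auxiliary $(q+2k-2)$-uniform hypergraph whose edges are the internal vertex sets of walks, applies a random colouring so that all surviving walks traverse the colour classes in the same order, and then uses \cref{lemma:turanzero} (Erd\H{o}s's multipartite Tur\'an bound) to find a complete multipartite subgraph $K^{(k_1)}(M\ell)$. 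Inside that multipartite structure one can embed the whole (bounded-size, possibly branching) trunk tree layer by layer, which is exactly what ``gluing walks'' cannot do. Finally, a minor point: the reference to \cref{proposition:boundedsizecolourclasses} for constructing $\sigma$ is spurious; $\sigma$ is obtained by simply sorting the classes $\overline{L}_i = \bigcup_{j\geq 0} L_{\ell+jk+i}$ by size and needs no lemma.
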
 

Roughly speaking, \cref{lem:embedding} states that we can embed any sufficiently small tree of bounded degree into any large $k$-graph using vertices only from a given extensible edge, a given  reservoir,  and a given dense $k$-partite subgraph.
Property~\ref{keylemma:1} says that we can map the root of the tree into any $(k-1)$-subset of an extensible edge of the host graph, in any order.
Property~\ref{keylemma:2} says that we only embed a fixed number of layers of $T$ in the reservoir (and thus only use a constant number of its vertices).
Property~\ref{keylemma:3} ensures the remaining levels of the tree are embedded into the $k$-partite subgraph $(W_1,\dotsc,W_k)$ and, moreover, we can decide which of these receives most (second most, etc) of the vertices from $T$.
Finally, Property~\ref{keylemma:4} states that all the edges from $T[V_{[\ell +1, m]}]$ are mapped to $\theta$-extensible edges of $H$.

In \cref{subsection:toolsforembedding}, we will gather some tools for the proof of \cref{lem:embedding},
which is postponed to \cref{subsection:proofofembeddinglemma}.

\subsection{Tools for embedding} \label{subsection:toolsforembedding}

 We will use a hypergraph version of the fact that every graph of average degree at least $d$ contains a subgraph of minimum degree at least $d/2$.
 This is achieved by the next lemma.

\begin{proposition}[Cleaning the graph]\label{proposition:mincodeg}
	Let $k,m \ge 2$, let $d\in (0,1)$ and
	let $H$ be a $k$-partite $k$-graph, with partition classes each of size at most $m$.
	If $H$ has at least $d m^k$ edges,
	then $H$ has a non-empty subgraph $H'$ such that $\deg_{H'}(f) \ge dm/k$ for every $f \in \partial H'$.
\end{proposition}

\begin{proof}
	Starting with $H_1:=H$, proceed as follows for $i\ge 1$.
	If there is an $f\in \partial H_i$ with $\deg_{H_i}(f)< dm/k$,
	then obtain $H_{i+1}$ from $H$ by removing all edges containing $f$.
	If there is no such $f$, we stop and set $H':=H_i$.
	It only remains to show that $H'\neq\emptyset$. For this, observe that the total number of deleted edges is less than $|\partial H| dm/k\le d m^k$.
\end{proof}

In the proof of \cref{lem:embedding}, we will use \cref{proposition:mincodeg} to clean $H[W_1,\dots,W_k]$ 
to 
 find a subgraph $H'\subseteq H''$	such that every $f \in \partial H'$ has large codegree.
The next lemma states that in such a $k$-graph, one can extend any (correctly located) partial embedding of a $k$-tree to a larger $k$-tree. 
The proof proceeds by mapping the remaining vertices successively, using the codegree condition. 

\begin{proposition}[Extending a partial tree embedding] \label{proposition:emb extension}
	Let $\Delta,k,m, n\in \mathbb N$ with $k \ge 2$,
	and let $\delta,\beta > 0$ with $\beta \le \delta /2$. Let $H$ be a $k$-graph and let $W_1, \dotsc, W_k\subseteq V(H)$ be pairwise disjoint, and  of size at most $n$ each.
	Let $H' \subseteq H[W_1, \dotsc, W_k]$ such that for each $f \in \partial H'$ we have	$\deg_{H'}(f) \ge \delta n$.
	Let $(T,\mathbf x,\mathcal L)$ be a layered $k$-tree with $|V(T)|\le \beta n$,
	$\Delta_1(T) \leq \Delta$ and $\mathcal L=(L_1,  \dotsc, L_{m})$.
	Let $1\le\ell\le m-k+1$, and suppose there is an embedding $\varphi_1$ of $T_1 := T[V_{[1,\ell + k - 1]}]$ in $H$ such that
	\begin{enumerate}
		\item $\phi_1 (L_{\ell + i})\subseteq W_i$ for every $i \in [k-1]$, and
		\item if $f\in\partial T [V_{[\ell + 1,\ell + k - 1]} ]$ then $\phi_1(f)$ has codegree at least $\delta m$ in $H'$.
	\end{enumerate}
	Then there is an embedding $\varphi$ of $T$ which extends $\varphi_1$, such that for each $i \in [m - \ell]$, $\phi(L_{\ell + i})\subseteq W_j$ if and only if $j \equiv i \bmod k$.
\end{proposition}

\begin{proof}
	In each $W_i$, at most $|V(T)|\le\beta n \le \delta n / 2$ vertices are used by the partial embedding $\varphi_1$,
	and at most $\delta n / 2$ new vertices need to be embedded in each $W_i$.
	Because of our condition on the codegrees of $H'$,
	we can extend $\varphi_1$ greedily, embedding the vertices of $V(T) \setminus V(T_1)$ one by one, following any valid ordering, and choosing an unused vertex in each step.
\end{proof}

The next lemma (Lemma~\ref{lemma:embeddingtrunk}) is designed to find an embedding of a short sequence of consecutive layers of a layered $k$-tree,
while fixing the location of the initial and final segments.
Its output will be the input for \cref{proposition:emb extension}, which is then used to prove Lemma~\ref{lem:embedding}. 

\begin{lemma}[Embedding the trunk of a tree] \label{lemma:embeddingtrunk}
	Let $1/n \ll 1/k, 1/q, 1/\ell, 1/M, \delta, \alpha$ and $2\le k \le (\ell-1)/2$.
	Let $\mathcal{L} = (L_1, \dotsc, L_{m})$ be a layering of a rooted $k$-tree $(T, \mathbf x)$, and assume $V_{[t, t+\ell]}$ is $M$-bounded. \\
	If $T_I \subseteq T[V_{[t, t+\ell]}]$, $H$ is a $k$-graph on $n$ vertices, $U \subseteq V(H)$ and $\mathbf F_1, \mathbf F_2 \subseteq \partial^\circ H$ are such that
	\begin{enumerate}[\rm (I)]
		\item $|\mathbf F_1|, |\mathbf F_2| \ge \delta n^{k-1}$, and \label{item:embedding-manyends}
		\item for every $\mathbf v_1 \in \mathbf F_1$ and $\mathbf v_2 \in \mathbf F_2$ there are at least $\alpha n^q$ many walks going from $\mathbf v_1$ to $\mathbf v_2$, each of length $\ell-k+1$, each with $q$ internal vertices all from $U\setminus (\mathbf v_1\cup \mathbf v_2)$, \label{item:embedding-manywalks}
	\end{enumerate}
	then there exists an embedding $\phi: V(T_I) \rightarrow V(H)$ such that
	\begin{enumerate}
		\item $\phi(\mathbf v)\in  \mathbf F_1$ for each $\mathbf v\in \partial^\circ T_I[V_{[t,t+k-2]}]$,
		\item $\phi(\mathbf v)\in  \mathbf F_2$ for each $\mathbf v\in \partial^\circ T_I[V_{[t+\ell - k + 2,t+\ell]}]$, and
		\item $\phi(v) \in U$ for each $v \in V_{[t+k-1, t+\ell-k+1]}$.
	\end{enumerate}	
\end{lemma}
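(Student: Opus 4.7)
My plan is to construct $\phi$ by processing the vertices of $T_I$ in a valid ordering compatible with the layering, handling lower-indexed layers before higher ones. The key initial observation is that since $V_{[t,t+\ell]}$ is $M$-bounded and $\ell, M$ are constants, we have $|V(T_I)| \le (\ell+1)M =: C$, a constant. Hence the embedding involves only $O(1)$ choices and we can afford to lose constant factors when counting valid extensions. This will let me treat all constraints as a bounded collection of conditions on a bounded set of variables, leveraging conditions (I) and (II) with plenty of slack.

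The strategy proceeds in three stages corresponding to the three layer ranges. For vertices in the initial boundary $V_{[t,t+k-2]}$, I would use condition~(I) to find compatible images: since $|F_1|\ge\delta n^{k-1}$ and the number of tuple constraints is bounded, a counting argument yields that a positive fraction of all possible embeddings into $V(H)$ satisfy the constraint that every tuple in $\partial^o T_I[V_{[t,t+k-2]}]$ lands in $F_1$. The final boundary $V_{[t+\ell-k+2,t+\ell]}$ is handled symmetrically using $F_2$. For the middle vertices in $V_{[t+k-1,t+\ell-k+1]}$, which must land in $U$, I would use condition~(II): given already-embedded boundary tuples $f_1\in F_1$ and $f_2\in F_2$ associated to the middle vertex through the tree structure, (II) supplies at least $\alpha n^q$ walks from $f_1$ to $f_2$ with $q$ internal vertices in $U$, so I can select the middle vertex's image from the internal vertices of such a walk while avoiding the $O(1)$ previously used vertices.

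The main obstacle will be coordinating the three stages so they are simultaneously satisfiable, since $T_I$ may contain several pseudopaths between the two boundaries and have complex branching that forces many pairs $(f_1,f_2)$ of chosen tuples to be compatible with a single coherent walk structure. I plan to resolve this via a joint averaging argument: count triples $(f_1,f_2,W)$ where $f_1\in F_1$ and $f_2\in F_2$ are consistent with the tree's boundary assignment and $W$ is a walk between them from (II). By (I) and (II) this count is $\Omega(n^{2(k-1)+q})$, which dwarfs the number of configurations blocked by other partial choices, so I can pick boundary and middle embeddings simultaneously. The remaining branches of $T_I$ that fall off the chosen spine are embedded one vertex at a time, again invoking (II) with auxiliary $F_1', F_2'$ built from the current partial embedding, to ensure each new image lies in $U$ and avoids the constantly many previously-placed vertices.
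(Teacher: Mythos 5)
Your plan correctly identifies the central difficulty (coordinating the three layer ranges when $T_I$ branches), and the initial counting observations are sound, but the proposed resolution has a genuine gap that makes it fail. The problem is in the final stage: once you have chosen a spine via the joint averaging argument, you say you will add the remaining branch vertices ``one vertex at a time, again invoking (II) with auxiliary $F_1', F_2'$ built from the current partial embedding.'' But condition~(II) only asserts the existence of many walks between a \emph{fixed} $f_1 \in F_1$ and $f_2 \in F_2$; it gives no local extendability statement. To add a new vertex $v$ in a middle layer you would need to know that the $H$-neighbourhood in $U$ of the (already-embedded) anchor $\phi(\alpha(v))$ is large, and to add a vertex in an outer layer you would additionally need a hereditary extension property for $F_1$ or $F_2$. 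Neither of these follows from (I) and (II): a set of walks can all avoid any given $(k-1)$-tuple that lies off the chosen spine, so there is no guarantee of even one admissible image for the next branch vertex, let alone enough slack to dodge the $O(1)$ vertices already used. The greedy tail of the argument therefore does not close.

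What is missing is the paper's mechanism for turning the global ``many walks'' hypothesis into a structure that absorbs arbitrary bounded branching. The paper encodes the (at least $\alpha n^q$) walks as edges of an auxiliary $(q+2k-2)$-uniform hypergraph $\mathcal{H}'$ on $V(H)$, uses a random colouring to pass to a subhypergraph $\mathcal{H}$ whose edges all correspond to walks that visit the colour classes in the \emph{same} order, and then applies Erd\H{o}s's supersaturation theorem (that $K^{(k_1)}(s)$ has Tur\'an density zero) to extract a complete $k_1$-partite subgraph $\mathcal{K}$ with parts of size $M\ell$. Because every walk in $\mathcal{K}$ traverses the parts in the same order, and because $T_I$ is a subgraph of a blow-up of the tight path on $\ell+1$ vertices with parts of size at most $M$, any injection mapping $L_{t+i-1}$ into the $i$-th part of $\mathcal{K}$ is automatically a valid embedding. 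This complete-multipartite structure is precisely what handles the branching you are worried about, and your proposal has no substitute for it.
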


The proof of Lemma~\ref{lemma:embeddingtrunk} can be summarised as follows.
In a first step, we define an auxiliary hypergraph $\mathcal{H}'$ whose edges correspond to the interior vertices of a walk from some $\mathbf v_1 \in \mathbf F_1$ to some $\mathbf v_2 \in \mathbf F_2$;
our assumptions will ensure that $\mathcal{H}'$ is sufficiently dense.
Secondly, we discard some edges of $\mathcal{H}'$ to ensure that the remaining hypergraph $\mathcal{H}$ encodes only walks which use $q$ distinct vertices and also use repeated vertices in precisely the same positions of the walk.
This is done by defining a $q$-vertex-colouring and only keeping the (edges which correspond to) walks which are $q$-coloured according to this colouring, in such a way that the colouring codifies the order of each walk.
In a final step, we use supersaturation in $\mathcal{H}$ in order to find a copy of a large complete multipartite subgraph $\mathcal K$ of $\mathcal{H}$. 
Now, as the walks were encoded in the colouring, we can use $\mathcal K$ to embed $T_I$ into the walks corresponding to the edges of $\mathcal K$.

\begin{proof}[Proof of Lemma~\ref{lemma:embeddingtrunk}]
	Define $k_1 = q + 2k-2$ and $k_2 = \ell - k + 1$.
	
	\medskip
	\noindent \emph{Step 1: Defining an auxiliary hypergraph.}	
	We begin by defining an auxiliary $k_1$-graph $\mathcal{H}'$.
	The vertices of $\mathcal H'$ are the vertices of $H$. The vertex set of a walk $W$ of length $k_2$ in $H$ is declared an edge of $\mathcal H'$ if there exist $\mathbf v_1, \mathbf v_2 \in \partial^\circ H$ such that the following conditions hold:
	\begin{itemize}
		\item $\sta{W}=\mathbf v_1 \in \mathbf F_1$ and $\ter{W}=\mathbf v_2\in \mathbf F_2$,

		\item $\int{W} \subseteq U$ and $|\int{W}| = q$, and

		\item $\sta{W}$, $\ter{W}$ and $\int{W}$ (viewed as sets) are pairwise disjoint.
	\end{itemize}
	We claim that 
	\begin{equation}\label{mathcalH'edges}
	\text{$\mathcal{H}'$ has at least $ \frac{\delta^2 \alpha}{2 \ell!} n^{k_1}$ edges.}
	\end{equation}
	Indeed, by \ref{item:embedding-manyends} there are at least $|\mathbf F_1| \ge \delta n^{k-1}$ possible choices for $\mathbf v_1$,
	which will correspond to the start of a walk $W$ defining an edge of $\mathcal{H}'$.
	Each such $\mathbf v_1$ intersects at most $(k-1)n^{k-2}$ elements of $\partial H$.
	So, as $|\mathbf F_2| \ge \delta n^{k-1}$ and $n$ is large, there are at least $|\mathbf F_2| - (k-1)n^{k-2} \ge \delta n^{k-1}/2$ ways to select an end $\mathbf v_2 \in \mathbf F_2$ disjoint from $\mathbf v_1$.
	Having chosen $\mathbf v_1$ and $\mathbf v_2$, by \ref{item:embedding-manywalks} there are at least $\alpha n^q$ many walks $W$ going from $\mathbf v_1$ to $\mathbf v_2$ that could define an edge of $\mathcal{H}'$.
	Having fixed $\mathbf v_1$ and $\mathbf v_2$, since all the given walks $W$ have length $k_2$, the set $\int{W}$ could coincide for at most $k_2! < \ell!$ many of the given walks,
	and thus at most $\ell!$ different walks from $\mathbf v_1$ to $\mathbf v_2$ yield the same edge of $\mathcal{H}'$. Thus the number of edges in $\mathcal{H}'$ is at least $\delta n^{k-1} \times (\delta  n^{k-1} / 2) \times \alpha n^q  \times (\ell!)^{-1}$, as claimed.
	\medskip
	
	\noindent \emph{Step 2: Cleaning the auxiliary hypergraph.}	
	We now define a colouring $c: V(H) \rightarrow \{1, \dotsc, k_1\}$ by choosing a colour for each vertex independently and uniformly at random.
	Let $\mathcal{H}^c \subseteq \mathcal{H}'$ be spanned by all edges $X \in \mathcal{H}'$ whose corresponding walk $W$ has the following properties:
	\begin{enumerate}[(a)]
		\item \label{item:embedding-cleaning-sta} if $\sta{W}=(x_1, \dotsc, x_{k-1})$, then $c(x_i)=i$ for all $i \leq k-1$,
		\item if $\ter{W}=(y_1, \dotsc, y_{k-1})$, then $c(y_i)=k_1 - k + i$ for all $i \leq k-1$, and
		\item \label{item:embedding-cleaning-int} no two vertices of $\int{W}$ have the same colour.
	\end{enumerate}
	For a fixed $X \in \mathcal{H}'$, the probability of belonging to $\mathcal{H}^c$ is at least $k_1^{-k_1}$.
	So the expected size of~$\mathcal{H}^c$ is at least $|E(\mathcal{H}')|/k_1^{k_1} \ge \delta^2 \alpha n^{k_1} / (2 \ell! k_1^{k_1})$ (where we used~\eqref{mathcalH'edges}).
	We can thus fix a colouring $c$ and $\mathcal{H}^c \subseteq \mathcal{H}'$ having properties \ref{item:embedding-cleaning-sta}-\ref{item:embedding-cleaning-int} and also fulfilling 
	\begin{equation}\label{sizeHc}
	|\mathcal{H}^c| \ge \delta^2 \alpha n^{k_1} / (2 \ell! k_1^{k_1}).
	\end{equation}
	Now	we further restrict $\mathcal{H}^c$ to make sure that, for all edges $X \in E(\mathcal{H}^c)$ corresponding to a walk $W$, the interiors of the walks are all consistently coloured.
	All of the walks $W$ have length $k_2$,
	and thus (seeing $W$ as a sequence of vertices),
	the vertices outside of $\sta{W}$ and $\ter{W}$ correspond to $k_2-k+1$ vertices with possible repetitions,
	whose underlying set $\int{W}$ is coloured with different colours from $\{k, ..., k_1 - k\}$.
	So tracking the colours received by the vertices of the walk defines a sequence of colours,
	with possible repetitions, chosen among $k_1 - 2k + 1$ available colours.
	As there are at most $(k_1 - 2k + 1)^{k_2 - k + 1} \leq k_1^{k_2}$ such sequences, and because of~\eqref{sizeHc},
	the pigeon-hole principle gives a subset $E(\mathcal{H}) \subseteq E(\mathcal{H}^c)$ of size at least 
	\begin{equation}\label{sizeH__}
	|E(\mathcal{H}^c)| / k_1^{k_2} \ge \delta^2 \alpha n^{k_1} / (2 \ell! k_1^{k_1 + k_2}),
		\end{equation}
	such that each walk $W$ corresponding to an edge $X$ of $\mathcal H$ is coloured in exactly the same way under $c$.
	
	\medskip
	
	\noindent \emph{Step 3: Using supersaturation.}	
	Let $\beta = \delta^2 \alpha / (2 \ell! k_1^{k_1 + k_2})$.
	By our assumptions, $1/n \ll \beta$
	and by~\eqref{sizeH__}, we know that~$\mathcal{H}$ is a $k_1$-graph with at least $\beta n^{k_1}$ edges.
	So, we can apply~\cref{lemma:turanzero}, with $k_1$ and $M \ell$ playing the roles of $k$ and $s$, to find that $\mathcal H$ contains a copy $\mathcal K$ of $K^{(k_1)}(M \ell)$, the complete $k_1$-partite $k_1$-graph with classes of size $M \ell$.
	Now, take any edge in $\mathcal K$, and recall it gives rise to  a walk $W=v_1 v_2 \dotsb v_{k_2}$ in~$H$.
	For all~$i$, let $V_i$ denote the partition class of $\mathcal K$ that contains $v_i$.
	Note that by construction, $V_i=V_j$ is only possible for $i,j\in\{k, ..., \ell-k+1\}$.
	As all walks corresponding to edges of $\mathcal K$ are coloured in the same way, each of them passes through the sets~$V_i$ in the same order. 
	
	Consider any injective function $h : V(T_I) \rightarrow V(\mathcal K)$ which maps all of $L_{t+i-1}$ to $V_i$, for each $i \in \{1, \dotsc, \ell \}$.
Such a function exists, since by assumption, each $L_{t+i-1}$ has size at most $M$, and since  $W$ repeats each vertex at most $k_2\le \ell$ times, whereas each $V_i$ has $M\ell$ vertices. As $\mathcal K$ is complete $k_1$-partite, we have found the desired embedding of $T_I$ in $H$.
\end{proof}

\subsection{Proof of \cref{lem:embedding}} \label{subsection:proofofembeddinglemma}
The proof of \cref{lem:embedding} proceeds by separating the input $T$ in $T_1$ induced by the first $\ell+k-1$ layers,
which we call the ``trunk of $T$'',
and the remaining $T_2 = T \setminus T_1$, which we call the ``crown of $T$''.

The proof is separated into three steps.
In the first step, we will prepare the host graph $H$ for the embedding.
This will be done by removing non-extensible edges, or edges with undesirable codegree properties from $H[W_1, \dotsc, W_k]$ to get to a `cleaned' subgraph $H' \subseteq H$, and finding a suitable ordering of the clusters $W_1, \dotsc, W_k$ so that the final embedding satisfies the required properties.
In a second step, we will apply \refandname{lemma:embeddingtrunk} to embed the trunk $T_1$.
In the third and final step we extend the embedding of $T_1$ to an embedding of the whole tree, which is done using \refandname{proposition:emb extension}.
Now come the details.

\begin{proof}[Proof of \cref{lem:embedding}]
	To begin, we introduce a new constant $\eps > 0$ such that $\theta \ll \eps \ll d, c, 1/k$, and set $\delta =dc^k/(2k)$.
	
	\medskip
	
	\noindent \emph{Step 1: Preparing $H$ and $T$ for the embedding.}
	Obtain $H''$ from $H[W_1,\dots,W_k]$  by removing all  non-$\theta$-extensible edges. Since $1/n, \theta \ll \eps$, Lemma~\ref{lem:extensible} implies that
	there are at most $\eps \binom{n}{k} \leq \eps n^k$ non-$\theta$-extensible edges.
	Hence, by our choice of $\eps \ll d, c, 1/k$, we have
	\[e(H''[W_1,\dots,W_k])\ge d|W_1|\dotsb |W_k|-\eps n^k \ge d c^k n^k -\eps n^k \ge \frac{dc^k}{2}n^k.\]
	Use \cref{proposition:mincodeg} to find a subgraph $H'\subseteq H''$	such that for every $f \in \partial H'$, $\deg_{H'}(f) \ge \frac{dc^k}{2k} n = \delta n$, by definition of $\delta$.
	
	Now turn to $T$.
	For each $i \in [k]$, let $\overline{L}_i = \bigcup_{j\ge 0}L_{\ell+jk+i}$. Note that $\{ \overline{L}_i \}_{i \in [k]}$  partitions $V_{[\ell+1, m]}$. Let $\sigma:[k] \to [k]$ be a permutation with
		$|\overline{L}_{\sigma^{-1}(1)}|\ge \dotsb \ge |\overline{L}_{\sigma^{-1}(k)}|$.
	Our plan is to embed $\overline L_{i}$ into $W_{\sigma(i)}$ for all $i \in [k]$, as this will ensure~\ref{keylemma:3}.
	\medskip
	
	\noindent \emph{Step 2: Embedding the trunk of the tree.}
	Let $T_1 := T[V_{[1,\ell + k-1]}]$. We will embed $T_1$, using vertices in $R$, starting from $f\subseteq e$ and ending in $W_1\cup \dotsc\cup W_k$.
	Formally, our goal is to find an embedding $\phi_1$ of $T_1$ such that
	\begin{enumerate}[\upshape (T1)]
		\item \label{item:f1}
		$\phi_1(\mathbf x) = \mathbf f$,
		\item \label{item:f2}  $\phi_1(V_{[1,\ell]} \setminus \mathbf x)\subseteq R$,
		\item \label{item:f3} for every $i \in \{ 1, \dotsc, k-1 \}$, $L_{\ell + i}$ is embedded in $W_{\sigma(i)}$, and
		\item \label{item:f4} if $f\in \partial T_1 [V_{\ell + 1,\ell + k - 1}]$ then  $\deg_{H''}(\phi_1(f)) \ge \delta n$.
	\end{enumerate}
	We initially set $\phi_1(\mathbf x) = \mathbf f$, thus ensuring~\ref{item:f1}.
	In order to embed the remaining vertices in $T'_1 = T_1 \setminus \mathbf x$, we will use Lemma~\ref{lemma:embeddingtrunk}, which we will apply in a suitably defined subgraph.
	For this, say that a $k$-tuple $\mathbf e' = (x'_1,\dots,x'_k) \in R^k$ is \emph{$e$-good} if $e \cup \mathbf e'$ induces a copy of $K^{(k)}(2)$ with partition classes $\{x_1,x_1'\},\dotsc,\{x_k,x_k'\}$.
	Define  sets $\mathbf F_1, \mathbf F_2 \subseteq \partial^\circ H$ as
	\begin{align*}
		\mathbf F_1 & = \{ (x'_2, \dotsc, x'_k)\in \partial^\circ H : \text{$(x'_1, x'_2, \dotsc, x'_k)$ is an $e$-good $k$-tuple for some $x'_1 \in R$}\}, \text{and } \\
		\mathbf F_2 & = \{ (y_1, \dotsc, y_{k-1}) \in W_{\sigma(1)}\times \dots \times W_{\sigma(k-1)} : \deg_{H'}(\{y_1,\dots,y_{k-1}\})\ge \delta n \}.
	\end{align*}
	Next, we show that between every $\mathbf h, \mathbf h'\in\partial^\circ  H$ there are many short walks of fixed length which pass through $R$.
	To this end, note that $H$ is $\gamma n$-large and $R$ is a $(\gamma, \mu, 2)$-reservoir,
	and therefore $H$ is $((\gamma^2 - 2 \mu)n, R)$-large.
	Moreover, by the choice of $\mu \ll \gamma$, $H$ is actually $(\gamma^2n/2, R)$-large. By $1/n \ll \gamma \ll 1/k$,
	we can apply Lemma~\ref{lemma:strengthenedconnectinglemma} with input $k, \gamma^2/2$, and $\ell$ in place of $k, \gamma, \ell$, to see the following.
	
	\begin{claim} \label{claim:applyingconnectinglemma}
		For all $\mathbf h, \mathbf h'\in\partial^\circ H$,
		there are $(\gamma^2 n/4)^q$ many walks $W$ of length $\ell$ in $H[\mathbf h \cup R \cup \mathbf h']$ from $\mathbf h$ to $\mathbf h'$,
		each with $q$ internal vertices all in $R\setminus (\mathbf h \cup \mathbf h')$.
	\end{claim}
	We wish to apply \cref{lemma:embeddingtrunk} with $\mathbf F_1, \mathbf F_2$, and $R$ in place of $U$.
	Let us check that its hypotheses are satisfied.
	First, we show that $|\mathbf F_1|$ is large.
	Since $e$ is $\theta$-extensible and $R$ is a $(\gamma, \mu, 2)$-reservoir,
	there are at least $(\theta - \mu) \binom{|U| - k}{k} \ge ( \theta \gamma^k / (2k!)) n^k$ $e$-good tuples in $R$ (we have used $\mu \ll \theta$). Keeping the last $(k-1)$-vertices of any such tuple, we deduce that  $|\mathbf F_1|\ge (\theta \gamma^k / (2 k!)) n^{k-1}$.	
	Secondly, we show that $|\mathbf F_2|$ is large.
	For this, recall that each $(k-1)$-tuple in $\partial H'$ has codegree at least $\delta n$.
	So, we see that $|\mathbf F_2|\ge \delta^{k-1} n^{k-1}$.
	Finally, by \cref{claim:applyingconnectinglemma}, for each choice $\mathbf v_1 \in \mathbf F_1$ and $\mathbf v_2\in \mathbf F_2$ there are at least $(\gamma^2 n/4)^q$ walks $W$ of length $\ell$ going from $\mathbf v_1$ to $\mathbf v_2$ in $H$,
	each satisfying $|\int{W}| = q$, internally disjoint from $\mathbf v_1\cup \mathbf v_2$ and $\int{W} \subseteq R$, as required.

	Note that $T'_1 \subseteq T[V_{[2, \ell +k - 1]}]$.	
	Since $\mathcal{L} = (V_1, \dotsc, V_m)$ is a layering of $(T,\mathbf x)$, and $\Delta_1(T) \leq \Delta$,
	\cref{proposition:boundedclusters} implies that
	$|L_i| \leq \Delta^{\ell + k - 1}$ for all $1 \leq i \leq \ell+k-1$,
	which implies that $V_{[2, \ell+k-1]}$ is $\Delta^{\ell + k - 1}$-bounded.
	Since $1/n \ll 1/k, 1/\Delta, \theta, \gamma, d, c$,
	we can apply \refandname{lemma:embeddingtrunk}, with $T'_1$, $H$, $R$, $\min\{ ( \theta \gamma^k / (2 k!)), \delta^{k-1} \}$, $\Delta^{\ell+k-1}$, $2$, $\ell+k-3$ playing the roles of $T_I$, $H$, $U$, $\delta$, $M$, $t$ and $\ell$, respectively.
	By doing so, this gives an embedding $\phi'_0: V(T'_1) \rightarrow V(H)$.
	By construction, the union of $\phi_0$ and $\phi'_0$ gives an embedding $\phi_1$ of $T_1$ satisfying \ref{item:f1}--\ref{item:f4}.
	\medskip
	
	\noindent \emph{Step 3: Embedding the crown of the tree.}
	We need to extend the embedding $\phi_1$ of $T_1$ to an embedding of all of $T$ in $H'[W_1,\dots, W_k]$.
	By~\ref{item:f4}, we know that every $f\in \phi_1(\partial T[V_{\ell+1,\ell+k-1}])$ has codegree at least $\delta n$ in $H'[W_{\sigma(1)},\dots,W_{\sigma(k-1)}]$. Since $\beta \ll d, c, 1/k$ and the definition of $\delta$, we can assume $\beta \leq \delta/2$.
	Thus, we can use \refandname{proposition:emb extension}, with $H', W_{\sigma(1)}, \dotsc, W_{\sigma(k)}$ playing the role of $H, W_1, \dotsc, W_k$, to find an embedding $\phi$ of $T$ which extends $\phi_1$ and such that, for each $i \in [m - \ell]$, $L_{\ell + i}$ is embedded in $W_j$, where $i \equiv \sigma(j) \bmod k$.
	
	Now we verify that $\phi$ satisfies  \ref{keylemma:1}--\ref{keylemma:4}.
	Since $\phi$ extends $\phi_1$ and since $r \subseteq V(T_1)$, \ref{keylemma:1} follows from \ref{item:f1}.
	Since all of $V(T_2) = V(T) \setminus V(T_1)$ was embedded in $W_1 \cup \dotsb \cup W_k$, which is disjoint from $R$,
	we have $\phi^{-1}(R) = \phi^{-1}_1(R)$.
	From \ref{item:f2}--\ref{item:f3}, we deduce that \ref{keylemma:2} holds.
	The properties of $\phi$ imply that, for each $i \in [k]$, $\phi(\overline{L}_i)\subseteq W_{\sigma(i)}$, and therefore $\phi^{-1}(W_\sigma(i)) = \overline{L}_i$.
	By the choice of $\sigma$, \ref{keylemma:3} holds.
	Finally, since $H'[W_1, \dotsc, W_k]$ only contains $\theta$-extensible edges, \ref{keylemma:4} holds.
\end{proof}

\section{Absorption}\label{section:absorption}

In this section, we state and prove lemmas which will allow us to complete the embedding of an almost spanning tree.
This technique is similar to the one used in~\cite{BHKMPP2018,BMPP18}.
Let us first define useful structures both for the $k$-tree we want to embed, and for the host graph which is used to embed the $k$-tree.

\begin{definition}[Absorbing $X$-tuple]
Let $k\ge 3$ and let $X$ be a $(k-1)$-tree on $h\ge k-1$ vertices, with a fixed valid ordering $x_1, \dotsc, x_{h}$.
For a $k$-tree $T
$, we say that an $(h + 1)$-tuple $(v_1, \dotsc, v_{h}, v^\ast)$ of vertices of $T$ is an \emph{$X$-tuple} if
\begin{enumerate}[\normalfont{(\roman*)}]
	\item $V(T(v^\ast)) = \{ v_1, \dotsc, v_{h} \}$, and
	\item the map $x_i \mapsto v_i$ is a hypergraph isomorphism between $X$ and $T(v^\ast)$.
\end{enumerate}
Let $H$ be a $k$-graph on $n$ vertices and let $(v_1, \dotsc, v_k)$ be a $k$-tuple of distinct vertices of $H$. An \emph{absorbing $X$-tuple for $(v_1, \dotsc, v_k)$} is an $(h + 1)$-tuple $(u_1, \dotsc, u_h, u^\ast)$ of vertices of $H$ such that
\begin{enumerate}[(A)]
	\item \label{Xtuple:1} $\{ v_1, \dotsc, v_{k-1}, u^\ast \} \in H$, and
	\item \label{Xtuple:2} there exists a copy $\tilde{X}$ of $X$ on $\{ u_1, \dotsc, u_h \}$ such that $\tilde{X} \subseteq H(v_k) \cap H(u^\ast)$.
\end{enumerate}
Furthermore, we write $\Lambda_X(v_1, \dotsc, v_k)$ for the set of absorbing $X$-tuples for $(v_1, \dotsc, v_k)$, and we let $\Lambda_X$ denote the set of all absorbing $X$-tuples in $H$, that is, $\Lambda_X$ is the union of $\Lambda_X(v_1, \dotsc, v_k)$ over all $k$-tuples $(v_1, \dotsc, v_k)$ of distinct vertices of $V(H)$. Suppose there exists an embedding $\phi: V(T) \rightarrow V(H)$ and let $(u_1, \dotsc, u_{h}, u^\ast)$ be an $(h+ 1)$-tuple of vertices of $H$.
We say $(u_1, \dotsc, u_{h}, u^\ast)$ is \emph{$X$-covered by $\phi$} if there exists an $X$-tuple $(v_1, \dotsc, v_h, v^\ast)$ of vertices of $T$ such that $\phi(v^\ast) = u^\ast$ and $\phi(v_i) = u_i$ for all $i \in [h]$. \end{definition}
The idea behind the definition of this gadget (the $X$-tuple) is that it will allow us to extend the embedding of a tree by iteratively adding leaves. 
We illustrate an $X$-tuple and how the extension step works in Figure~\ref{figure:Xtuple}.

 \begin{figure}[h!]
	\centering
		\begin{tikzpicture}[scale=1.1]
		
		\coordinate (v2l) at (2, 3.9);
		\coordinate (v1l) at (3, 3.7);
		\coordinate (v3l) at (0, 3.9);
		\coordinate (u2l) at (0, 2.2);
		\coordinate (u1l) at (-2.2, 2);
		\coordinate (u3l) at (-1, 0);
		\coordinate (u4l) at (1, 0);
		\coordinate (u0l) at (2.5, 1.3);
		
		\draw (v2l) node {$v_2$};
		\draw (v1l) node {$v_1$};
		\draw (v3l) node {$v_3$};
		\draw (u2l)  node  {$u_2$};
		\draw (u1l)  node {$u_1$};
		\draw (u3l) node  {$u_3$};
		\draw (u4l) node {$u_4$};
		\draw (u0l) node {$u^*$};
		
		\draw (8,0)+(v2l) node {$v_2$};
		\draw (8,0)+(v1l) node {$v_1$};
		\draw (8,0)+(v3l) node {$v_3$};
		\draw (8,0)+(u2l)  node  {$u_2$};
		\draw (8,0)+(u1l)  node {$u_1$};
		\draw (8,0)+(u3l) node  {$u_3$};
		\draw (8,0)+(u4l) node {$u_4$};
		\draw (8,0)+(u0l) node {$u^*$};
		
		\draw [->,dashed, thick] (3.5,1.5) -- node[below] {adding $v_3$} (5,1.5); 
		
		\coordinate (v3) at (0,3.5);
		\coordinate (v2) at (2,3.5);
		\coordinate (v1) at (3,3.3);
		\coordinate (u1) at (-2,1.5);
		\coordinate (u2) at (0,1.8);
		\coordinate (u0) at (2,1.5);
		\coordinate (u3) at (-1,0.5);
		\coordinate (u4) at (1,0.5);
		
		\tikzstyle{every node}=[circle, draw, fill, inner sep=0pt, minimum width=4pt]
		\draw (v1) node {};
		\draw (v2) node {};
		\draw (v3) node {};
		\draw (u1) node {};
		\draw (u2) node {};
		\draw (u0) node {};
		\draw (u3) node {};
		\draw (u4) node {};		
		
		\draw (8,0)+(v1) node {};
		\draw (8,0)+(v2) node {};
		\draw (8,0)+(v3) node {};
		\draw (8,0)+(u1) node {};
		\draw (8,0)+(u2) node {};
		\draw (8,0)+(u0) node {};
		\draw (8,0)+(u3) node {};
		\draw (8,0)+(u4) node {};

		\triple{(u1)}{(u2)}{(u0)}{6pt}{1pt}{DarkDesaturatedBlue,opacity=0.8}{VerySoftBlue,opacity=0.2};
		\triple{(u3)}{(u2)}{(u0)}{6pt}{1pt}{DarkDesaturatedBlue,opacity=0.8}{VerySoftBlue,opacity=0.2};
		\triple{(u4)}{(u2)}{(u0)}{6pt}{1pt}{DarkDesaturatedBlue,opacity=0.8}{VerySoftBlue,opacity=0.2};
		\draw[-, thick](u1) -- (u2);
		\draw[-,thick](u3) -- (u2);
		\draw[-,thick](u4) -- (u2);
		\draw[-, thick](8,0)+(u1) -- ($(8,0)+(u2)$);
		\draw[-,thick](8,0)+(u3) -- ($(8,0)+(u2)$);
		\draw[-,thick](8,0)+(u4) -- ($(8,0)+(u2)$);
		
		\triple{(u1)}{(v3)}{(u2)}{6pt}{1pt}{DarkDesaturatedBlue,opacity=0.2}{VerySoftBlue,opacity=0.05};
		\triple{(u3)}{(v3)}{(u2)}{6pt}{1pt}{DarkDesaturatedBlue,opacity=0.2}{VerySoftBlue,opacity=0.05};
		\triple{(u2)}{(v3)}{(u4)}{6pt}{1pt}{DarkDesaturatedBlue,opacity=0.2}{VerySoftBlue,opacity=0.05};
		\triple{(v2)}{(v1)}{(u0)}{6pt}{1pt}{DarkDesaturatedBlue,opacity=0.2}{VerySoftBlue,opacity=0.05};
		
		
		\triple{($(8,0)+(u1)$)}{($(8,0)+(v3)$)}{($(8,0)+(u2)$)}{6pt}{1pt}{DarkDesaturatedBlue,opacity=0.8}{VerySoftBlue,opacity=0.2};
		\triple{($(8,0)+(u3)$)}{($(8,0)+(v3)$)}{($(8,0)+(u2)$)}{6pt}{1pt}{DarkDesaturatedBlue,opacity=0.8}{VerySoftBlue,opacity=0.2};
		\triple{($(8,0)+(u2)$)}{($(8,0)+(v3)$)}{($(8,0)+(u4)$)}{6pt}{1pt}{DarkDesaturatedBlue,opacity=0.8}{VerySoftBlue,opacity=0.2};
		\triple{($(8,0)+(v2)$)}{($(8,0)+(v1)$)}{($(8,0)+(u0)$)}{6pt}{1pt}{DarkDesaturatedBlue,opacity=0.8}{VerySoftBlue,opacity=0.2};
	\end{tikzpicture}

\caption{An illustration of an $X$-tuple and how this is used to extend an embedding of a $k$-tree.
	In this case, $k=3$, $H$ is a $3$-graph, $(v_1,v_2,v_3)$ is a tuple of distinct vertices of $H$, and $X$ is a star with $3$ leaves (which is a $2$-tree).
	The tuple $(u_1,u_2,u_3,u_4,u^*)$ is an $X$-tuple for $(v_1,v_2,v_3)$, so $\{u_1,u_2,u_3,u_4\}$ induces a copy of $X$ in $H(u^\ast) \cap H(v_2)$, and $\{v_1,v_2,u^*\}$ is an edge of $H$, as shown in the left picture. 
	Suppose, in addition, that $(u_1,u_2,u_3,u_4,u^*)$ is $X$-covered by a embedding $\varphi$ of some $3$-uniform tree $T$, and 
	$\{v_1,v_2\}\in \partial \varphi(T)$ and $v_3\not\in V(\varphi(T))$.
	To find an embedding of the tree $T+v$ obtained by attaching a new vertex to $\{v_1,v_2\}$, we can modify $\varphi$ by switching $u^*$ with $v_3$ and then adding the edge $\{v_1,v_2,u^*\}$, as shown in the right picture.
	This gives an embedding of $T+v$ where $v$ is copied to $u^*$.}
	\label{figure:Xtuple}
\end{figure}

The following lemma is the heart of our absorbing method.

\begin{lemma}[Absorbing Lemma] \label{lemma:finishingembedding} Let $n\ge h\ge k\ge 3$ and let $0<\delta <\alpha$.
	Let $T$ be a $k$-tree on $n$ vertices with a valid ordering of $V(T)$ given by $v_1, \dotsc, v_n$,  and let $T_0 = T[ \{ v_1, \dotsc, v_{n'} \}]$ be a $k$-subtree of $T$ on $n' \ge (1 - \delta)n$ vertices.
	Let $H$ be a $k$-graph on $n$ vertices, and suppose there exists an embedding $\phi_0:V(T_0)\to V(H)$, a $(k-1)$-tree $X$ on $h$ vertices and a family $\calA \subseteq \Lambda_X$ of $(h + 1)$-tuples of vertices of $H$ with the following properties:
	\begin{enumerate}[\normalfont{(\roman*)}]
		\item the tuples in $\calA$ are pairwise vertex-disjoint, \label{itm:absorbingtuples-disjoint}
		\item every tuple in $\calA$ is $X$-covered by $\phi_0$, and \label{itm:absorbingtuples-covered}
		\item $|\Lambda_X(v_1, \dotsc, v_k) \cap \calA| \ge \alpha n$ for every $k$-tuple of distinct vertices of $H$ such that $\{ v_1, \dotsc, v_{k-1} \} \in \partial H$. \label{itm:absorbingtuples-many}
	\end{enumerate}	
	Then there exists an embedding of $T$ in $H$.
\end{lemma}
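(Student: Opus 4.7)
The plan is to iteratively extend the given embedding $\phi_0$ of $T_0$ to an embedding of all of $T$ by absorbing the remaining $n-n'\le\delta n$ vertices $v_{n'+1},\dots,v_n$ one at a time. Each such $v_i$ is attached via a single leaf extension: by the valid ordering its anchor $\alpha(v_i)$ lies inside $V(T_0)$ and is already mapped by $\phi_0$, and we need to select an unused vertex of $H$ to be $\phi(v_i)$. Since nearly all vertices of $H$ may already be used, a greedy extension is hopeless, and the idea is to use each tuple of $\calA$ to simultaneously accommodate a new leaf and relocate a previously embedded image.

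The key structural observation is the following. Fix any $(u_1,\dots,u_h,u^*)\in\calA$; condition (ii) provides an $X$-tuple $(v_1,\dots,v_h,v^*)$ of $T$-vertices with $\phi_0(v^*)=u^*$ and $\phi_0(v_j)=u_j$. In particular $v^*$ together with all of its $T$-neighbours lies in $V(T_0)$, so $v^*$ can never belong to the anchor of any $v_{n'+i}$; hence the images $\phi_0(\alpha(v_{n'+i}))$ stay fixed throughout the procedure. Moreover, property (i) guarantees that the $v^*$'s arising from distinct tuples of $\calA$ are themselves pairwise distinct and pairwise non-adjacent in $T$, so each edge of $T$ meets at most one such $v^*$.

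Now enumerate $V(H)\setminus\phi_0(V(T_0))$ as $z_1,\dots,z_{n-n'}$ and build $\phi$ inductively. At step $i$, set $(a_1,\dots,a_{k-1}):=\phi_0(\alpha(v_{n'+i}))$; by the observation above, together with $z_i$ these form a $k$-tuple of distinct vertices of $H$. Condition (iii) gives $|\Lambda_X(a_1,\dots,a_{k-1},z_i)\cap\calA|\ge\alpha n$, and since at most $i-1<\delta n<\alpha n$ tuples from $\calA$ have been used so far, I can pick a fresh tuple $(u_1^{(i)},\dots,u_h^{(i)},u^{*(i)})\in\calA\cap\Lambda_X(a_1,\dots,a_{k-1},z_i)$, which by (i) is vertex-disjoint from all previously chosen tuples. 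Let $(v_1^{(i)},\dots,v_h^{(i)},v^{*(i)})$ be the $X$-tuple witnessing $X$-coverage by $\phi_0$, and redefine $\phi$ by setting $\phi(v^{*(i)}):=z_i$ and $\phi(v_{n'+i}):=u^{*(i)}$, leaving $\phi=\phi_0$ on every other vertex.

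Finally I would verify that $\phi$ is an embedding of $T$. Injectivity is immediate from (i), the distinctness of the $z_i$, and the fact that the $z_i$ lie outside $\phi_0(V(T_0))$ while every $u^{*(j)}$ lies inside it. For edge preservation I distinguish three classes of edges of $T$: an edge of $T_0$ meeting no $v^{*(j)}$ is mapped identically by $\phi_0$ and remains in $H$; an edge of $T_0$ containing some (necessarily unique) $v^{*(i)}$ is mapped to $\{u_{j_1}^{(i)},\dots,u_{j_{k-1}}^{(i)},z_i\}$, which lies in $H$ because property (B) applied to $(a_1,\dots,a_{k-1},z_i)$ certifies that the copy $\tilde X$ of $X$ on $(u_1^{(i)},\dots,u_h^{(i)})$, interpreted via the ordering of the tuple so as to coincide with $\phi_0(T(v^{*(i)}))$, is contained in $H(z_i)$; and each newly added edge $\{v_{n'+i}\}\cup\alpha(v_{n'+i})$ maps to $\{a_1,\dots,a_{k-1},u^{*(i)}\}\in H$ by (A). The main delicate point is the middle class: one must identify the copy $\tilde X$ from (B) with the canonical copy determined by the ordered tuple $(u_1^{(i)},\dots,u_h^{(i)})$, which matches $\phi_0(T(v^{*(i)}))$ precisely because of the indexing in the $X$-coverage condition; the structural observation about anchors then guarantees that successive absorptions do not interfere.
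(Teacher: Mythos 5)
Your overall strategy matches the paper's — iteratively absorbing each new vertex $v_{n'+i}$ by swapping a $v^*$ from a fresh tuple of $\calA$ with the unused vertex $z_i$ — and the observation that each such $v^*$ has its entire $T$-link inside $V(T_0)$ (so no $v^*$ can appear in the anchor of any $v_{n'+i}$) is correct and indeed a key ingredient. However, there is a genuine gap in how you handle the anchor.

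You claim, both at the outset and at the start of each step, that ``by the valid ordering its anchor $\alpha(v_{n'+i})$ lies inside $V(T_0)$ and is already mapped by $\phi_0$,'' and you then set $(a_1,\dotsc,a_{k-1}) := \phi_0(\alpha(v_{n'+i}))$. This is false for $i\ge 2$: the valid ordering only guarantees $\alpha(v_{n'+i}) \subseteq \{v_1,\dotsc,v_{n'+i-1}\}$, which may contain some of the previously absorbed vertices $v_{n'+1},\dotsc,v_{n'+i-1}$. (For a concrete example take $T$ a tight path with $T_0$ its initial segment; then $\alpha(v_{n'+2})$ already contains $v_{n'+1}\notin V(T_0)$.) For such a vertex $\phi_0$ is not even defined, so $(a_1,\dotsc,a_{k-1})$ is ill-defined, and the application of (iii) to $\Lambda_X(a_1,\dotsc,a_{k-1},z_i)$ does not go through. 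Correspondingly, in the final verification the ``newly added edge'' $\alpha(v_{n'+i}) \cup \{v_{n'+i}\}$ need not map to $\{a_1,\dotsc,a_{k-1},u^{*(i)}\}$ with $a_j\in\phi_0(V(T_0))$ — some of its anchor vertices map under the current (partially updated) embedding to images $u^{*(j)}$ that were placed in earlier steps. The fix is exactly what the paper does: maintain a running embedding $\phi_{i-1}$, define $(a_1,\dotsc,a_{k-1}) := \phi_{i-1}(\alpha(v_{n'+i}))$ using the current map, and observe (via your correct $v^*$ remark, plus the disjointness of $\calA$) that the swap performed at step $i$ never disturbs the values of $\phi$ on any anchor used in a later step, so these images stay put once set. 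Your argument does not establish this, because it pretends all anchors live inside $T_0$.
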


\begin{proof}
	Let $m = n - n'$ and let $\{ x_1, \dotsc, x_m \}$ be an arbitrary enumeration of $V(H) \setminus V( \phi_0(T_0) )$.
	For every $i \in [m]$, we set $T_i := T[ \{ v_1, \dotsc, v_{n' + i} \} ]$. Iteratively, for each $0\le i\le  m$, we will find an embedding $\phi_i: V(T_i) \rightarrow V(H)$ and subset $\calA_i \subseteq \calA$ with the following properties:
	\begin{enumerate}[(a{$_i$})]
		\item $\phi_i(V(T_i)) = \phi_0(T_0) \cup \{ x_1, \dotsc, x_i \}$, \label{item:embedding1}
		\item $|\calA_i| \leq i$, and \label{item:embedding2}
		\item for every $(u_1, \dotsc, u_h, u^\ast) \in \calA \setminus \calA_i$, $\phi^{-1}_i(u^\ast) = \phi^{-1}_0(u^\ast)$ and $\phi^{-1}_i(u_j) = \phi^{-1}_0(u_j)$ for every $j \in [h]$. \label{item:embedding3}
	\end{enumerate}
	It is very easy to see that for $i = 0$ the properties hold for $\phi_0$ and $\calA_0 := \emptyset$.
	Suppose that for some $0\le i\le m-1$ we have defined $\phi_i$ and $\calA_i$ satisfying \ref{item:embedding1}--\ref{item:embedding3}. We shall construct $\phi_{i+1}$ and $\calA_{i+1}$ satisfying (a$_{i+1}$)--(c$_{i+1}$).
	
	Since $v_1, \dotsc, v_n$ is a valid ordering for $T$, there exists a unique $(k-1)$-set $\{v_{i_1}, \dotsc, v_{i_{k-1}} \} \subseteq V(T_i)$ such that $\{v_{i_1}, \dotsc, v_{i_{k-1}}, v_{n'+i+1} \} \in T$.
	Let $w_1, \dotsc, w_{k-1} \in V(H)$ be an arbitrary labelling of $\phi_i( \{ v_{i_1}, \dotsc, v_{i_{k-1}} \} )$ and define $w_k := x_{i+1}$.
	Note that $w_1, \dotsc, w_{k-1} \in \partial H$, so by assumption $|\Lambda_X(w_1, \dotsc, w_k) \cap \calA| \ge \alpha n$, $i \leq m \leq \delta n$ and $\delta < \alpha$.
	Thus, by \ref{item:embedding2}, we have \begin{align*}
		|\Lambda_X(w_1, \dotsc, w_k) \cap \calA \setminus \calA_i| \ge \alpha n - |\calA_i| = \alpha n - i \ge (\alpha - \delta)n > 0.
	\end{align*}
	Now we can select an arbitrary absorbing $X$-tuple $(u_1, \dotsc, u_{h}, u^\ast)\in \calA \setminus \calA_i$ for $(w_1, \dotsc, w_k)$, and define $\calA_{i+1} := \calA_i \cup \{ (u_1, \dotsc, u_{h}, u^\ast) \}$.
	Note this definition of $\calA_{i+1}$ satisfies (b$_{i+1}$).
	Since $(u_1, \dotsc, u_{h}, u^\ast)$ is an $X$-tuple for $(w_1, \dotsc, w_k)$ in $\calA \setminus \calA_i$, then $\{u_1,\dots,u_h,u^*\}$  is $X$-covered by $\phi_0$ and, because of~\ref{item:embedding3}, it is also $X$-covered by $\phi_i$.
	For every $x \in V(T_{i+1})$, define
	\begin{align*}
		\phi_{i+1}(x) := \begin{cases}
			w_k & \text{if $x = \phi^{-1}_{i}(u^\ast)$}, \\
			u^\ast & \text{if $x = v_{n'+i+1}$}, \\
			\phi_i(x) & \text{otherwise.}
		\end{cases}
	\end{align*}
	Note that the function $\phi_{i+1}$ is injective and it satisfies~(a$_{i+1}$) and~(c$_{i+1}$).
	To finish, we check that $\phi_{i+1}$ is an embedding of $V(T_{i+1})$.
	Indeed, if $e \in T_{i+1}$ does not contain $v_{n'+i+1}, \phi^{-1}_{i}(u^\ast)$, then $e \in T_i$ and $\phi_{i+1}(e) = \phi_i(e) \in H$ since $\phi_i$ is an embedding of $T_i$.
	If $e \in T_{i+1}$ contains $v_{n'+i+1}$, then $e = \{v_{i_1}, \dotsc, v_{i_{k-1}}, v_{n'+i+1} \}$ and because of~\ref{Xtuple:1} (in the definition of absorbing $X$-tuples) we know that $\phi_{i+1}(e) = \{ w_1, \dotsc, w_{k-1}, u^\ast \} \in H$.
	If $e \in T_{i+1}$ contains $\phi^{-1}_{i}(u^\ast)$, then there exists a $(k-1)$-edge $e'$ in $T(\phi^{-1}_{i}(u^\ast))$ such that $e = e' \cup \{ \phi^{-1}_{i}(u^\ast) \}$ and $u^\ast \cup \phi_i(e') \in H$.
	Note that $\phi_{i+1}(e) = w_k \cup \phi_i(e')$.
	Since $(u_1, \dotsc, u_h, u^\ast)$ is an $X$-tuple for $(w_1, \dotsc, w_k)$, then by~\ref{Xtuple:2} we know that $w_k \cup \phi(e') \in H$ and therefore $\phi_{i+1}(e) \in H$, as desired.
	Thus $\phi_{i+1}$ is an embedding of $T_{i+1}$.
	
	Following this process for $m$ steps, we find an embedding $\varphi_m$ of $T_m = T$, as desired.
\end{proof}

In the remainder of this section we will prove a series of lemmas that allow us to build a partial embedding of a tree $T$ in which properties \ref{itm:absorbingtuples-disjoint}--\ref{itm:absorbingtuples-many} of Lemma~\ref{lemma:finishingembedding} are fulfilled.

\subsection{Finding separated tuples in $k$-trees}\label{subsection:separatedtuples}

Let $\calT_{k, [h]}$ be the family of all non-labelled $k$-trees on at most $h$ vertices, up to isomorphism. For our purposes, we need to bound $|\calT_{k, [h]}|$ in terms of $h$ and $k$. Let $\calT_{k, h}$ be the family of all $k$-trees on exactly $h$ vertices.
We will bound $|\calT_{k, h}|$ by the number of \emph{labelled} $k$-trees on $h$ vertices. For all such labelled trees $T$, recall that all but the first $k$ vertices have an anchor.
Since for each vertex in $T$ we have at most $\binom{h}{k-1}$ options for its anchor, we thus have $|\calT_{k, h}| \leq \tbinom{h}{k-1}^{h-k} \leq h^{h (k-1)}$,
which in turn implies
\begin{align}
	|\calT_{k, [h]}| & \leq h^{h k}. \label{equation:boundontrees}
\end{align}
Recall that the distance between $(k-1)$-tuples of the shadow of a $k$-tree was given in Definition~\ref{definition:distance}.
Given a $k$-tree $T$, a $(k-1)$-tree $X$, and $\ell \ge 0$, we say that a set $\calB$ of $X$-tuples of $T$ is \emph{$\ell$-separated} if they are pairwise at a distance at least $\ell$,
that is, for each distinct $B_i, B_j \in \calB$,
and each $f_i \in \partial T[B_i]$, $f_j \in \partial T[B_j]$, we have $d_T(f_i, f_j) \ge \ell$.

We now show that every bounded-degree tree contains a large $\ell$-separated set of $X$-tuples, for some $(k-1)$-tree~$X$.

\begin{proposition} \label{proposition:findingfamilyinT}
	Suppose $0 < \mu \ll 1/\Delta, 1/k, 1/\ell$ and $k \ge 2$.
	Let $T$ be a $k$-tree on $n$ vertices such that $\Delta_1(T) \leq \Delta$.
	Then there exists a $(k-1)$-tree $X \in \calT_{k-1, [\Delta + k - 1]}$ and an $\ell$-separated set $\calB$ of $X$-tuples of $T$ with $|\calB| \ge \mu n$.
\end{proposition}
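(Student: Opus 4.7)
The plan is to split the argument into two steps: first find many candidate $X$-tuples with a common link structure, and then prune them to obtain an $\ell$-separated subfamily.

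First, for each vertex $v\in V(T)$, consider the restricted link graph $T(v)$. By \cref{lemma:linkgraph}, $T(v)$ is a $(k-1)$-tree on at most $\Delta+k-1$ vertices, so its isomorphism class lies in $\calT_{k-1, [\Delta+k-1]}$. Using the bound~\eqref{equation:boundontrees} on $|\calT_{k-1, [\Delta+k-1]}|$, the pigeonhole principle yields some $X\in \calT_{k-1, [\Delta+k-1]}$ such that the set
\[
S := \{v\in V(T): T(v)\cong X\}
\]
has size at least $c_1 n$, where $c_1 = c_1(\Delta, k)>0$. Fixing a valid ordering $x_1, \dotsc, x_h$ of $X$, for each $v^*\in S$ we pick an arbitrary hypergraph isomorphism $X\to T(v^*)$, which produces an $X$-tuple $(v_1, \dotsc, v_h, v^*)$. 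Let $\calA$ denote the resulting family of $X$-tuples, so $|\calA|=|S|\ge c_1 n$.

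Next, I would bound how many tuples in $\calA$ can be too close to any given one. Define an auxiliary graph $G$ on the vertex set $S$, where $u\sim v$ if the $X$-tuples centred at $u$ and at $v$ are at distance less than $\ell$ in the sense defined before the proposition. The key local observation is that, by iterating the fact that each $(k-1)$-set of $\partial T$ lies in at most $\Delta$ edges (since $\Delta_1(T)\le \Delta$), the number of $(k-1)$-sets within pseudopath-distance $\ell$ of a fixed $f\in\partial T$ is at most $(k\Delta)^{\ell}$. Each tuple involves at most $\binom{\Delta+k}{k-1}$ elements of $\partial T$, and each nearby $(k-1)$-set is contained in at most $\Delta$ edges, hence in at most $k\Delta$ vertices that could serve as a centre $u^*$ of a close tuple. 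Combining these bounds gives
\[
\deg_G(v^*) \leq C(\Delta, k, \ell) := \binom{\Delta+k}{k-1}\, (k\Delta)^{\ell}\cdot k\Delta
\]
for every $v^*\in S$.

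Finally, a greedy argument (or Tur\'an's bound) produces an independent set $\calB\subseteq S$ in $G$ of size at least $|S|/(C+1) \ge c_1 n/(C+1)$. By construction, the collection of $X$-tuples centred at the vertices of $\calB$ is $\ell$-separated, and choosing $\mu\le c_1/(C+1)$ at the start of the argument finishes the proof. The main technical point is the local-counting step bounding $\deg_G(v^*)$, which must be carried out carefully using the definition of pseudopath-distance and the degree bound on $T$; everything else is routine pigeonholing and greedy selection.
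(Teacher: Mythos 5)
Your proposal is correct and follows essentially the same route as the paper's proof: pigeonhole on the isomorphism type of the restricted link graph $T(v)$ (using \cref{lemma:linkgraph} and the bound~\eqref{equation:boundontrees}) to find a linear-size set of candidate $X$-tuples, then a bounded-degree local-counting argument (your auxiliary graph $G$ with a greedy/Tur\'an independent set) to extract an $\ell$-separated subfamily. The paper phrases the second step as taking a maximal set $W$ together with a bound on how many tuples can be within distance $\ell$ of a fixed one, but this is the same greedy argument; your explicit constant $C(\Delta,k,\ell)$ differs from the paper's $(\Delta+k)^{\ell+1}$ only in bookkeeping, and either serves the purpose since $\mu$ is chosen small afterward.
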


\begin{proof}
	By \cref{proposition:linkgraph} and the bound $\Delta_1(T)\le \Delta$, for every vertex $v \in V(T)$, $T(v)$ is a $(k-1)$-tree which is in $\calT_{k-1, [\Delta + k - 1]}$.
	By the pigeon-hole principle, there exists a $(k-1)$-tree $X \in \calT_{k-1, [\Delta + k - 1]}$ and a subset $W' \subseteq V(T)$ of size at least $n/|\calT_{k-1, [\Delta + k - 1]}|$ such that $T(w) \cong X$ for every $w \in W'$.
	Note that each $w \in W'$ yields an $X$-tuple $B_w$ in $T$ which uses precisely the vertices in $T(w)$.
	Let $W \subseteq W'$ be maximal so that for all distinct $w_1, w_2 \in W$, the $(k-1)$-tuples in $V(T(w_1))$ and $V(T(w_2))$ are at distance at least $\ell$.  
	Set $\calB :=\{B_w:w\in W\}$.
	It remains to show that $|\calB| \ge \mu n$.
	
	The assumption $\Delta_1(T) \leq \Delta$ implies that for every $w'_1 \in W'$, there are at most $(\Delta + k)^{\ell+1}$ other vertices $w'_2 \in W'$ such that $T(w'_1)$ and $T(w'_2)$ have distance less than $\ell$.
	We deduce that \begin{align*}
		|\calB| = |W| \ge \frac{|W'|}{(\Delta + k)^{\ell+1}} \ge \frac{n}{|\calT_{k-1, [\Delta + k - 1]}| (\Delta + k)^{\ell+1}} \ge \mu n,
	\end{align*} where the last inequality follows from \eqref{equation:boundontrees} and the assumption that $\mu \ll 1/\Delta, 1/k, 1/\ell$.
\end{proof}

\subsection{Finding absorbing tuples in the host graph}\label{subsec:absorbing tuples}

Many copies of an $X$-tuple in a tree $T$ will indicate   parts of $T$ that are `flexible enough' to be interchanged. This can be used to extend a partial embedding of $T$ into an embedding of all of $T$.
In the following proposition we will show the existence of many absorbers for each $k$-tuple.
We remark that here the condition of $(\gamma n)$-large is not enough, as we cannot construct absorbers if there are isolated vertices; but forbidding isolated vertices in addition to being $(\gamma n)$-large will be enough.

\begin{proposition} \label{proposition:absorbingtuples}
	Let $1/n \ll \beta \ll \gamma, 1/h, 1/k$ with $h \ge k \ge 2$.
	Let $H$ be a $k$-graph on $n$ vertices which is $\gamma n$-large and has no isolated vertices.
	Let $X$ be a $(k-1)$-tree on $h$ vertices
	and let $(v_1, \dotsc, v_k)$ a $k$-tuple of distinct vertices of $H$ such that $\{v_1, \dotsc, v_{k-1}\} \in \partial H$.
	Then $|\Lambda_X(v_1, \dotsc, v_k)| \ge \beta n^{h + 1}$.
\end{proposition}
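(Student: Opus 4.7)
My plan is to build absorbing $X$-tuples $(u_1,\dotsc,u_h,u^*)$ for $(v_1,\dotsc,v_k)$ in two stages: first select the special vertex $u^*$ verifying property~\ref{Xtuple:1}, and then embed the $(k-1)$-tree $X$ into the common link $G := H(v_k) \cap H(u^*)$ to obtain $(u_1,\dotsc,u_h)$ verifying property~\ref{Xtuple:2}. Since $\beta \ll \gamma, 1/h, 1/k$, it will suffice to find at least $\gamma n/2$ valid $u^*$, and, for each of them, at least $c(\gamma,k,h)\, n^h$ labelled embeddings $\phi\colon V(X)\to V(G)$ avoiding $\{v_1,\dotsc,v_k,u^*\}$, since each such $\phi$ gives a distinct ordered tuple $(\phi(x_1),\dotsc,\phi(x_h),u^*)\in \Lambda_X(v_1,\dotsc,v_k)$.

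For the first stage, I will apply $\gamma n$-largeness to the ordered tuple $(v_1,\dotsc,v_{k-1})$. We may assume $(v_1,\dotsc,v_{k-1})\in \partial^o H$, since otherwise $\Lambda_X(v_1,\dotsc,v_k)=\emptyset$ is forced and the statement is vacuous. Largeness then gives $|N_H(v_1,\dotsc,v_{k-1})|\ge \gamma n$, so there are at least $\gamma n - k$ admissible $u^*\in V(H)\setminus\{v_1,\dotsc,v_k\}$ with $\{v_1,\dotsc,v_{k-1},u^*\}\in H$.

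For the second stage, fix such a $u^*$ and a valid vertex ordering $x_1,\dotsc,x_h$ of $X$, and build $\phi$ greedily along the ordering. At step $i\ge k$, the anchor $\alpha(x_i)$, of size $k-2$, has already been embedded into some set $A$, and because the earlier edge of $X$ containing $\alpha(x_i)$ has been mapped to an edge of $G$, both $A\cup\{v_k\}$ and $A\cup\{u^*\}$ are $(k-1)$-subsets of edges of $H$, hence lie in $\partial^o H$. Applying $\gamma n$-largeness to these two distinct ordered $(k-1)$-tuples provides at least $\gamma n$ common $H$-neighbours; discarding the at most $h+k$ used or forbidden vertices leaves at least $\gamma n/2$ valid choices for $\phi(x_i)$. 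For the initial edge $\{x_1,\dotsc,x_{k-1}\}$, an analogous argument yields at least $c(\gamma,k)\,n^{k-1}$ ordered embeddings, provided one first shows that $\Omega(n^{k-2})$ ordered $(k-2)$-tuples $A$ satisfy $A\cup\{v_k\}, A\cup\{u^*\}\in \partial^o H$ — i.e.\ lie in the $(k-2)$-shadow of both links $H(v_k)$ and $H(u^*)$.

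Multiplying these bounds gives at least $c(\gamma,k)\, n^{k-1} \cdot (\gamma n/2)^{h-k+1} \ge c'(\gamma,k,h)\, n^h$ labelled embeddings for each $u^*$, and thus at least $(\gamma n-k)\cdot c'(\gamma,k,h)\,n^h\ge \beta n^{h+1}$ absorbing $X$-tuples in total. The main obstacle is the lower bound on the initial-edge count: it is the only place where bare $\gamma n$-largeness does not immediately suffice, and it hinges on $v_k$ lying in sufficiently many edges so that $\Omega(n^{k-2})$ many $(k-2)$-sets extend both $v_k$ and $u^*$ into $\partial^o H$. This is the technical point that needs most care; in the regime of this paper (where $H$ has minimum codegree $\Omega(n)$) every vertex is in $\Omega(n^{k-1})$ edges, so the required shadow count is automatic.
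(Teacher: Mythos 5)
Your proposal follows the same route as the paper: choose $u^{*}$ (the paper's $u_{h+1}$) as a vertex completing $\{v_1,\dotsc,v_{k-1}\}$ into an edge of $H$, then greedily embed the $(k-1)$-tree $X$ into the common link $H(v_k)\cap H(u^{*})$ one vertex at a time along a valid ordering, using $\gamma n$-largeness of $H$ at each step. For $i\ge k$ your argument is identical to (and slightly cleaner than) the paper's: the embedded anchor $A$, together with the previously embedded edge of $X$ containing it, certifies that $A\cup\{v_k\}$ and $A\cup\{u^{*}\}$ both lie in $\partial H$, so largeness applies and supplies $\gamma n -O(1)$ choices.

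Where you genuinely add something is the initial edge. The paper simply declares ``observe that $H'$ is $\gamma n$-large'' and then says to pick $u_1,\dotsc,u_{k-1}\in V(H')$ ``arbitrarily''; the first claim would need a $4$-way common-neighbourhood bound that pairwise $\gamma n$-largeness does not give, and the second is plainly insufficient since those vertices must form an edge of $H'$. You correctly isolate the real requirement: $\Omega(n^{k-2})$ ordered $(k-2)$-sets $A$ with $A\cup\{v_k\},A\cup\{u^{*}\}\in\partial H$, after which $\gamma n$-largeness finishes the initial edge with $\Omega(n)$ completions. So you have spotted a genuine gap in the printed proof. However, your patch is not self-contained: you resolve the shadow count by appealing to a minimum-codegree hypothesis that the proposition does not assume, only that its applications provide. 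As stated, the proposition (like the paper's argument) implicitly needs every $(k-1)$-set to lie in $\partial H$; a literally $\gamma n$-large $k$-graph can have an isolated vertex $v_k$, in which case $\Lambda_X(v_1,\dotsc,v_k)=\emptyset$ and the stated bound fails outright. Relatedly, your remark that if $(v_1,\dotsc,v_{k-1})\notin\partial^{o}H$ then ``the statement is vacuous'' is a slip: the conclusion $|\Lambda_X|\ge\beta n^{h+1}>0$ would then be \emph{false}, not vacuous. The correct framing is that the proposition carries an implicit nondegeneracy hypothesis (all $(k-1)$-sets lie in $\partial H$, as holds wherever it is invoked), and under that hypothesis your shadow count is indeed automatic and your proof goes through.
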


\begin{proof}
	We construct an absorbing $X$-tuple for $(v_1, \dotsc, v_k)$ by choosing vertices one by one.
	First, select an arbitrary vertex $u_{h + 1} \in N_H( \{ v_1, \dotsc, v_{k-1} \} ) \setminus \{ v_k \}$,
	and note that, since $\{v_1, \dotsc, v_{k-1}\} \in \partial H$, there are at least $\gamma n$ possible choices for $u_{h + 1}$.
	
	Define the $(k-1)$-graph $H' := H( v_k ) \cap H( u_{h + 1} )$.
	First, we show that $H'$ is non-empty.
	Since each vertex is contained at least in one edge, there must exist $k$-edges $e_1, e_2$ in $H$ which contain $v_k$ and $u_{h+1}$ respectively.
	Among all pairs $(e_1, e_2)$ of edges in $H$ such that $e_1$, $e_2$ contain $v_k$ and $u_{h+1}$ respectively, choose a pair such that $|e_1 \cap e_2|$ is maximum.
	If $|e_1 \cap e_2| < k-1$, then select $Y_1 \subseteq e_1$ of size $k-1$ containing $(e_1 \cap e_2) \cup \{v_k\}$, and $Y_2 \subseteq e_2$ of size $k-1$ containing $(e_1 \cap e_2) \cup \{u_{h+1}\}$.
	Then $Y_1, Y_2 \in \partial H$, and since $H$ is $\gamma n$-large, there exists $u' \in N(Y_1) \cap N(Y_2)$.
	Then $u' \cup Y_1$ and $u' \cup Y_2$ are two edges in $H$ containing $v_k$ and $u_{h+1}$ respectively and with larger intersection than $e_1, e_2$, a contradiction.
	Thus $|e_1 \cap e_2| = k-1$, which implies that $e_1 \cap e_2 \in H'$, and therefore $H'$ is non-empty, as desired.
	
	Now, observe that every $(k-2)$-set in $\partial H'$ has at least $\gamma n$ neighbours (which follows since $H$ is $\gamma n$-large).
	First, select an arbitrary $(k-1)$-set $\{x_1, \dotsc, x_{k-1}\} \in H'$.
	Using this, we can select $u_1, \dotsc, u_h$ iteratively in increasing order, as follows.
	First, fix a valid ordering $\{ t_1, \dotsc, t_{h} \}$ of the vertices of $X$.
	For each $1 \leq i \leq k-1$, having chosen $u_1, \dotsc, u_{i-1}$ already, we select $u_i$ as a neighbour of the $(k-2)$-set $\{x_{i+1}, \dotsc, x_{k-1}, u_1, u_2, \dotsc, u_{i-1}\} \in \partial H'$, with the additional assumpation that $u_i \notin \{ u_1, \dotsc, u_{i-1} \}$.
	Then, successively for $i=k-1, ...,  h+1$,  select $u_{i+1}$ in the following way: if $\{ t_{j_1}, \dotsc, t_{j_{k-2}}\}$ is the anchor of $t_{i+1}$, then  choose $u_{i+1}\in N_{H'}( \{ u_{j_1}, \dotsc, u_{j_{k-2}} \}  ) \setminus \{ u_1, \dotsc, u_i \}$  arbitrarily.
	By construction, $(u_1, \dotsc, u_{\ell + 1})$ is an absorbing $X$-tuple for $(v_1, \dotsc, v_k)$.
	
	Finally, as in each step there are at least $\gamma n/2$ possibilities to choose the next vertex~$u_i$, there are at least $( \gamma n/2)^{h + 1} \ge \beta n^{h + 1}$ absorbing $X$-tuples for $(v_1, \dotsc, v_k)$, as desired.
\end{proof}

Now we would like to find a linear-sized family $\calA \subseteq \Lambda_X$ of vertex-disjoint $X$-tuples such that every $k$-tuple has many absorbing $X$-tuples in $\calA$ (as to satisfy properties \ref{itm:absorbingtuples-disjoint} and \ref{itm:absorbingtuples-many} of Lemma~\ref{lemma:finishingembedding}).
The following lemma can be proved by selecting independently each $(\ell + 1)$-tuple in $\Lambda_X$ with probability $p := \Omega(n^{-\ell})$ and showing it satisfies the required properties with positive probability, which can be done using Chernoff's inequality (\cref{theorem:chernoff}) and Markov's inequality.
As this strategy is standard by now and has appeared in many other absorption-based proofs (e.g.~\cite[Claim 3.2]{RRS08}),
 we leave the details of the proof to the reader.

\begin{lemma} \label{lemma:usingchernoff}
	Let $1/n \ll \alpha\ll\beta, 1/k, 1/h$ with $h \ge k \ge 2$.
	Let $H$ be a $k$-graph  on $n$ vertices and let $X$ be a $(k-1)$-tree on $h$ vertices.
	Suppose $|\Lambda_X(v_1, \dotsc, v_k)| \ge \beta n^{h + 1}$ for every $k$-tuple $(v_1, \dotsc, v_k)$ of vertices of $H$ such that $\{v_1, \dotsc, v_{k-1}\} \in \partial H$.
	Then there is a set $\calA \subseteq \Lambda_X$ of at most $\alpha n$ disjoint $(h + 1)$-tuples of vertices of~$H$ such that $|\Lambda_X(v_1, \dotsc, v_k) \cap \calA| \ge \beta \alpha n / 8$ for every $k$-tuple $(v_1, \dotsc, v_k)$ of distinct vertices of~$H$ such that $\{v_1, \dotsc, v_{k-1}\} \in \partial H$.
	\hfill \qedsymbol
\end{lemma}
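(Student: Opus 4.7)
The plan is to use the probabilistic deletion method, along the lines hinted by the authors: pick a small random subfamily of $\Lambda_X$, then clean it up by discarding tuples that intersect.

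First I would set $p := \alpha/(4 n^{h})$ and let $\calA' \subseteq \Lambda_X$ be obtained by including each $(h+1)$-tuple of $\Lambda_X$ independently with probability $p$. Since $|\Lambda_X| \le n^{h+1}$, linearity of expectation gives $\expectation[|\calA'|] \le \alpha n/4$, and the assumption $|\Lambda_X(v_1,\dotsc,v_k)| \ge \beta n^{h+1}$ gives
\[
\expectation\bigl[|\Lambda_X(v_1,\dotsc,v_k)\cap \calA'|\bigr] \ge \frac{\alpha\beta n}{4}
\]
for every $k$-tuple $(v_1,\dotsc,v_k) \in V(H)^k$. Next I would count intersecting pairs: for a fixed tuple in $\Lambda_X$ there are at most $(h+1)^2 n^{h}$ other tuples of $\Lambda_X$ sharing a vertex with it, so the expected number of intersecting pairs $(A_1,A_2)$ within $\calA'$ is at most $p^2 (h+1)^2 n^{2h+1} = \alpha^2(h+1)^2 n/16$.

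Now I would apply standard concentration. Markov's inequality gives, with probability $\ge 3/4$, $|\calA'| \le \alpha n$, and with probability $\ge 3/4$, the number of intersecting pairs in $\calA'$ is at most $\alpha^2 (h+1)^2 n/4$. For each fixed $k$-tuple $(v_1,\dotsc,v_k)$, Chernoff's inequality (\cref{theorem:chernoff}) applied to the sum of independent Bernoullis $\Lambda_X(v_1,\dotsc,v_k)\cap \calA'$ gives $|\Lambda_X(v_1,\dotsc,v_k)\cap \calA'| \ge \alpha\beta n/8$ with probability at least $1 - 2\exp(-\Omega(\alpha\beta n))$; a union bound over the at most $n^k$ choices of $k$-tuples yields this lower bound for all $(v_1,\dotsc,v_k)$ simultaneously with probability $\ge 3/4$, using $1/n \ll \alpha, \beta, 1/k$. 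Intersecting these three events, there exists an outcome where all three hold.

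To finish, I would delete one tuple from each intersecting pair in $\calA'$, resulting in a family $\calA$ of pairwise vertex-disjoint $(h+1)$-tuples with $|\calA| \le |\calA'| \le \alpha n$. Since we remove at most $\alpha^2(h+1)^2 n/4$ tuples in total, for every $k$-tuple $(v_1,\dotsc,v_k)$ we have
\[
|\Lambda_X(v_1,\dotsc,v_k)\cap \calA| \ge \frac{\alpha\beta n}{8} - \frac{\alpha^2(h+1)^2 n}{4} \ge \frac{\alpha\beta n}{8},
\]
where the last inequality uses $\alpha \ll \beta, 1/h$. This $\calA$ satisfies the required properties.

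The proof is essentially a routine alteration argument, so there is no real obstacle; the only thing requiring slight care is making the constants in the hierarchy $1/n \ll \alpha \ll \beta, 1/k, 1/h$ absorb the $\Omega(n)$ losses from union bounds and deletions, which is why we build in a factor of $1/2$ slack at each step.
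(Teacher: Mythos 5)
Your approach is exactly the one the paper suggests in the line preceding the lemma (random sparsification via independent Bernoulli selection, Chernoff, Markov, and deletion of intersecting pairs), and the paper leaves the details to the reader, so there is no paper proof to compare against. The probability $p=\alpha/(4n^h)$ is chosen sensibly, the expectation and intersecting-pair counts are correct, and the Chernoff/Markov/union-bound applications are all in order.

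There is one genuine (though easily repaired) slip at the very end. After deleting one tuple from each intersecting pair, you write
\[
|\Lambda_X(v_1,\dotsc,v_k)\cap \calA| \ge \frac{\alpha\beta n}{8} - \frac{\alpha^2(h+1)^2 n}{4} \ge \frac{\alpha\beta n}{8},
\]
but the second inequality is false as stated: you are subtracting a strictly positive quantity and claiming the result is at least what you started with. What you actually need is to aim higher in the Chernoff step so that the deletion cost is absorbed. Concretely, since $\expectation\bigl[|\Lambda_X(v_1,\dotsc,v_k)\cap \calA'|\bigr] \ge \alpha\beta n/4$, Chernoff gives (say) $|\Lambda_X(v_1,\dotsc,v_k)\cap \calA'| \ge \alpha\beta n/4 - \alpha\beta n/16$ for all $k$-tuples simultaneously with probability at least $3/4$; then after deleting at most $\alpha^2(h+1)^2 n/4$ tuples one gets
\[
|\Lambda_X(v_1,\dotsc,v_k)\cap \calA| \ge \frac{3\alpha\beta n}{16} - \frac{\alpha^2(h+1)^2 n}{4} \ge \frac{\alpha\beta n}{8},
\]
where the last inequality does correctly use $\alpha \ll \beta, 1/h$ (specifically $\alpha(h+1)^2 \le \beta/4$). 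With this minor adjustment the argument is complete and correct.
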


\subsection{Covering $X$-tuples with a partial tree embedding}
Our final step in order to use the Absorbing Lemma is to show that we can cover a large family of absorbing tuples.

\begin{lemma}[Embedding pseudopaths]\label{lemma:embedding_pseudopaths}
Let $1/n \ll 1/\Delta, 1/k, 1/\ell, \gamma$ satisfying $\Delta,k\ge 2$, and also $\ell \ge (2k+1)\lfloor{k/2}\rfloor+2k$.
	Let $H$ be a $\gamma n$-large $k$-graph on $n$ vertices.
	Let $P$ be a $k$-uniform $(f,f')$-pseudopath $P$, and let $\mathbf f, \mathbf f'$ be any ordering of $f$ and $f'$ respectively.
	Moreover, suppose $e(P) \ge \Delta k(\ell+3k)$ and $\Delta_1(P)\le \Delta$.
	Then, given any pair of disjoint $(k-1)$-tuples $\mathbf x,\mathbf y\in\partial^\circ H$,  
	there exists an embedding $\varphi:V(P)\to V(H)$ such that $\varphi(\mathbf f) = \mathbf x$ and $\varphi(\mathbf f') = \mathbf y$.
\end{lemma}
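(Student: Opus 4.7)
My plan is to embed $P$ in two phases: a greedy edge-by-edge embedding for the bulk of $P$ starting from $\varphi(f)=x$, and a Connecting Lemma step at the end to force $\varphi(f')=y$. Fix a valid pseudopath ordering $e_1,\dotsc,e_t$ of $E(P)$ with $f\subseteq e_1$ and $f'\subseteq e_t$, and set $s:=\ell+3k$. Since $t\ge \Delta k(\ell+3k)\ge s$, we may split $E(P)$ into a main part $P_A=\{e_1,\dotsc,e_{t-s}\}$ and a tail $P_B=\{e_{t-s+1},\dotsc,e_t\}$.

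In Phase~I, I embed $P_A$ greedily. Declare $\varphi(\vec f):=\vec x$ and map the vertex of $e_1\setminus f$ to any unused vertex of $N_H(x)\setminus y$. For $i=2,\dotsc,t-s$, the anchor $\alpha_i\subseteq e_{i-1}$ of the new vertex added by $e_i$ is already embedded, because $e_{i-1}$ is the unique parent of $e_i$ in the pseudopath; I then map the new vertex to any unused vertex of $N_H(\varphi(\alpha_i))\setminus y$. By $\gamma n$-largeness each such neighbourhood has size at least $\gamma n$, so there are many available choices at every step, provided $|V(P)|$ is not too close to $n$.

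In Phase~II, the $(k-1)$-tuple $g:=\varphi(e_{t-s}\cap e_{t-s+1})\in\partial^o H$ is fixed by Phase~I, and I must embed $P_B$ (a sub-pseudopath with exactly $s=\ell+3k$ edges) so that its first $(k-1)$-set maps to $g$, its last $(k-1)$-set maps to $y$, and no previously used vertex is reused. I apply \cref{lemma:strengthenedconnectinglemma} inside the restriction $H[V(H)\setminus X]$, where $X:=\varphi(V(P_A))\setminus g$; since $|X|\le|V(P)|$, this restriction is still $(\gamma n/2)$-large, and I obtain a walk of length $\ell$ from $g$ to $y$ with $q$ fresh internal vertices in $V(H)\setminus X$. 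The $3k$-edge slack in $|P_B|=\ell+3k$ is what allows me to embed a few edges of $P_B$ greedily on each side of this walk so that its endpoints exactly match the pseudopath structure of $P_B$ at those positions.

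The main obstacle is that walks produced by the Connecting Lemma have a rigid overlap pattern (consecutive edges drop the earliest vertex in the walk), whereas a general pseudopath may drop any vertex of the previous edge at each step, so the walk does not by itself realise $P_B$'s pseudopath structure. I plan to handle this by choosing the valid pseudopath ordering of $P$ so that the $\ell$-edge middle stretch of $P_B$ that gets realised by the Connecting Lemma's walk is actually a tight path inside $P_B$; this is achievable for long enough stretches because $\Delta_1(P)\le\Delta$ forces every vertex to leave the active edge after at most $\Delta$ consecutive pseudopath steps, so long tight-path-like subsequences are unavoidable. Any residual non-tight transitions between tight-path stretches inside $P_B$ are absorbed into the $3k$ buffer, using greedy one-vertex extensions enabled by $\gamma n$-largeness.
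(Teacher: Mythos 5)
Your Phase~I is fine, and your instinct to reach for the Connecting Lemma (\cref{lemma:strengthenedconnectinglemma}) is the right one — but the reduction you propose for the middle stretch has a real gap, and it is exactly the gap that the paper's \nameandref{lemma:embeddingtrunk} is designed to fill.

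The problem is your claim that $\Delta_1(P)\le\Delta$ forces ``long tight-path-like subsequences'' and that you can choose a valid ordering making an $\ell$-edge stretch of $P_B$ into a tight path. Neither holds. A pseudopath has an essentially unique valid edge ordering (up to reversal), so you cannot reorder it; and $\Delta_1\le\Delta$ does not force long tight subpaths. For a concrete counterexample take $k=3$, $\Delta=4$: in the pseudopath $e_1=\{a,b,c\}$, $e_2=\{a,c,d\}$, $e_3=\{a,d,e\}$, $e_4=\{a,e,f\}$, $e_5=\{e,f,g\}$, $e_6=\{e,g,h\}$, $e_7=\{g,h,i\}$, $e_8=\{g,i,j\}$, $\dots$, the transitions alternate ``drop the middle vertex'' and ``drop the oldest vertex'', so there are never two consecutive tight transitions, $\Delta_1=4$, and this persists for as long as you like. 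The $3k$-edge buffer cannot absorb $\Omega(\ell)$ non-tight transitions one by one, so your Phase~II cannot be made to close.

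What the paper does instead is embed the middle of $P$ via \cref{lemma:embeddingtrunk}, which does \emph{not} try to make the walk realise the pseudopath directly. It treats each length-$\ell$ walk from $F_1$ to $F_2$ (supplied by \cref{lemma:strengthenedconnectinglemma}) as an edge of an auxiliary $k_1$-graph, colours vertices to synchronise the walks' overlap patterns, and then applies supersaturation (\cref{lemma:turanzero}) to find a large complete $k_1$-partite subgraph $\mathcal K$. Into $\mathcal K$ one can injectively map \emph{any} $M$-bounded layered $k$-tree respecting the layering — in particular a pseudopath with an arbitrary ``dropping'' pattern — because in $K^{(k_1)}(M\ell)$ every transversal $k$-set of the relevant parts is an edge. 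That is the step your proposal is missing; without something of this kind the argument does not go through. (The paper also grows greedily from both ends rather than from one, but that is a cosmetic difference — the essential missing ingredient is the $K^{(k_1)}(M\ell)$ step.)
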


\begin{proof}	
	Suppose $P$ has $t$ edges $e_1, \dotsc, e_t$ such that $f \subseteq e_1$ and $f' \subseteq e_t$.	
	Now let $\mathcal L=(L_1,\dots,L_m)$ be the (unique) layering for $(P, \mathbf{f})$.
	By Lemma~\ref{lemma:layeringpseudopath}\ref{item:layerpseudo-bounded}, $|L_i| \leq k \Delta$ for all $i\in[m]$.
	Thus the number $m$ of layers of $\mathcal{L}$ satisfies $ m \ge |V(P)|/(k \Delta) \ge |E(P)|/(k \Delta) \ge \ell+3k$.

	We start our construction of the embedding by setting $\varphi(\mathbf f) = \mathbf x$ and $\varphi(\mathbf f') = \mathbf y$.
	As a next step, we will embed greedily the first $k$ layers of $\mathcal{L}$ into $R$.
	Recall the definition of $r(j)$ from Lemma~\ref{lemma:layeringpseudopath}\ref{item:layerpseudo-increasing}, and let $j_1$ be the maximum $j$ such that $r(j) \leq k$.
	Lemma~\ref{lemma:layeringpseudopath}\ref{item:layerpseudo-increasing} implies that  $r(j_1) = k$, $r(j) \le k$ for all $j \le j_1$ and $r(j) > k$ for all $j > j_1$.
	Let $P_1$ be the subgraph of $P$ spanned by the edges $e_1, \dotsc, e_{j_1}$.
	Then $P_1$ is the `minimum' subpath of $P$ which contains all the edges touching the first $k$ layers.
	
	Let $\mathbf s_{1} = e_{j_1} \setminus (L_1 \cup \dotsb \cup L_k)$, ordered according to the increasing layering order.
	Since $r(j_1) = k$, $\mathbf s_1$ has size $k-1$.
	Now we embed $P_{1} \setminus \mathbf s_1$ making sure there are `many' possible extensions available for $\mathbf s_1$.
	First, in a greedy fashion, we count the number of embeddings of $e_1$ which extend $\varphi(\mathbf f)$ simply by completing $\mathbf f$ to $e_1$ to an unused vertex outside $\mathbf x \cup \mathbf y$. Since $H$ is $\gamma n$-large, this can be done in at least $\gamma n - |\mathbf x \cup \mathbf y| \ge \gamma n / 2$ ways.
	Next, we  iteratively count the extensions of the embedding of $e_1$ to an embedding of $P_{1}$, which can be done again in a greedy fashion,  again having $\gamma n / 2$  choices for  an unused vertex each time.
	Letting $n_1 = |V(P_{1})| - (k-1)$, we deduce that there are at least $(\gamma n / 2)^{n_1}$ embeddings of $P_{1}$ which extend $\varphi(\mathbf f)$.
	Now, an averaging argument entails that there is an embedding of $P_{1} \setminus \mathbf s_1$ which extends $\varphi(\mathbf f)$ and can be extended to at least $((\gamma n / 2)^{n_1})/n^{n_1-|s_1|}=(\gamma/ 2)^{n_1} n^{k-1}$ embeddings of $P_{1}$.
	Let $\phi(P_{1} \setminus \mathbf s_1)$  be such an extension of $\varphi(\mathbf f)$, and let $\mathbf F_1 \subseteq V(H)^{k-1}$ be the set of all ordered $(k-1)$-tuples which are valid extensions of $\phi(P_{1} \setminus \mathbf s_1)$ to an embedding of $P_{1}$.
	Then $|\mathbf F_1| \ge (\gamma/ 2)^{n_1} n^{k-1}$.
	
Observe that if we consider the edges of $P$ in reverse ordering (i.e.~we consider $e_t$ to be the first, $e_1$ to be the last edge), the resulting $k$-tree is an $(f',f)$-pseudopath.
	So, defining $j_2$ as the minimum $j \leq t$ such that $r(j) \ge m - 2k + 2$, and defining $\mathbf s_2$ and $P_2$ accordingly, we can proceed as in the previous paragraph, to obtain an embedding $\varphi(P_2 \setminus \mathbf s_2)$ which extends $\varphi( \mathbf f')$ and can be extended to at least $(\gamma/n)^{n_2} n^{k-1}$ embeddings of $P_2$.
	Letting $\mathbf F_2 \subseteq V(H)^{k-1}$ denote the set of all ordered $(k-1)$-tuples  giving valid extensions of $\phi(P_{2} \setminus \mathbf s_2)$ to an embedding of $P_{2}$, we have $|\mathbf F_2| \ge (\gamma/ 2)^{n_1} n^{k-1}$.	

	We complete the embedding by using Lemma~\ref{lemma:embeddingtrunk}.
	Let $\delta > 0$ be such that $1/n \ll \delta \ll \gamma$.
	Since $m - 3k \ge \ell \ge (2k+1)\lfloor{k/2}\rfloor+2k$ and $H$ is $(\gamma n, V(H))$-large,
	Lemma~\ref{lemma:strengthenedconnectinglemma} outputs $q \leq m-3k$ such that
	for each pair of disjoint tuples $\mathbf v_1 \in \mathbf F_1$ and $\mathbf v_2 \in \mathbf F_2$ there are at least $( \gamma n/2 )^q$ walks of length $m - 3k$ connecting $\mathbf v_1$ and $\mathbf v_2$, each having $q$ internal vertices, and internally disjoint from $\mathbf v_1\cup \mathbf v_2$.
	By the choice of $\delta$ we have $|\mathbf F_1|, |\mathbf F_2| \ge \delta n^{k-1}$,
	and note that the remaining vertices to be embedded correspond to $P \setminus (P_1 \cup P_2)$, whose set of vertices is completely in $V_{[k+1, m-k]}$, which is $k \Delta$-bounded.
	So, we can use Lemma~\ref{lemma:embeddingtrunk}
	with $P, P \setminus (P_1 \cup P_2), k+1, m-2k-1, k\Delta$ playing the roles of $T, T_I, t, \ell, M$
	to obtain an embedding $\varphi'$ of the remaining vertices from $L_{k+1} \cup \dotsb \cup L_{m-k}$,
	which extends the embedding $\varphi$ to an embedding of all of $P$.
\end{proof}

\begin{lemma}[Covering Lemma]\label{lemma:coveringtuples}
	Let $1/n \ll \alpha \ll \mu \ll \nu \ll \gamma, 1/h$ with $h, \Delta, k \ge 2$ and $\ell \ge (2k+1)\lfloor{k/2}\rfloor+2k$.
	Let $X$ be a $(k-1)$-tree on $h$ vertices.
	Let $H$ be a $\gamma n$-large $k$-graph on $n$ vertices, and let $\calA \subseteq \Lambda_X$ be a set of at most $\alpha n$ pairwise disjoint $(h+1)$-tuples of vertices of $H$.
	Let $T$ be a $k$-tree on $\nu n$ vertices, with $\Delta_1(T)\leq \Delta$, and let $\calB$ be a $2\Delta k(\ell+3k)$-separated set of size at least $\mu n$ of $X$-tuples of vertices of $T$.
	Then, for any $\mathbf y\in \partial^\circ H$, $\mathbf x\in \partial^\circ T$, there is an embedding $\phi: V(T) \rightarrow V(H)$ such that $\phi(\mathbf x)=\mathbf y$ and every tuple in $\calA$ is $X$-covered by~$\phi$.
\end{lemma}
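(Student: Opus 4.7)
The plan is to pre-commit the images of the root $r$ and of one pre-selected $X$-tuple of $T$ for each $A\in\calA$, and then to fill in the embedding iteratively, alternating \cref{lemma:embedding_pseudopaths} (for bridging pseudopaths) with greedy extension (for the remaining branches). Since $|\calA|\leq\alpha n\ll\mu n\leq|\calB|$, fix an injection $\iota\colon\calA\to\calB$, and for each $A=(u_1,\dots,u_h,u^*)\in\calA$ with $\iota(A)=(v_1,\dots,v_h,v^*)$, commit $\phi(v^*):=u^*$, $\phi(v_i):=u_i$ for $i\in[h]$, and $\phi(r):=f$. The pairwise disjointness of tuples in $\calA$ together with the separation of $\calB$ ensures $\phi$ is injective on its domain, and the absorbing $X$-tuple conditions \ref{Xtuple:1}--\ref{Xtuple:2} guarantee that every edge of $T$ inside a pre-committed $X$-tuple maps to an edge of $H$.

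Next, enumerate $\iota(\calA)=\{B_1,\dots,B_t\}$ in an order that follows a valid ordering of $T$ rooted at $r$, and process them in turn. At stage $j$, locate the pseudopath $P_j$ in $T$ bridging a $(k-1)$-tuple of the already-embedded subgraph to a $(k-1)$-subset of $B_j$ with pre-committed image. The $2\Delta k(\ell+3k)$-separation of $\calB$ forces $e(P_j)\geq\Delta k(\ell+3k)$, and $\Delta_1(P_j)\leq\Delta_1(T)\leq\Delta$, so \cref{lemma:embedding_pseudopaths} applies within the subgraph of $H$ obtained by removing all already-used vertices---still $(\gamma/2)n$-large because only $\nu n\ll\gamma n$ vertices are ever used. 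Running the lemma with endpoints matching the already-fixed images embeds $P_j$ via fresh vertices. Once all stages complete, the skeleton of $T$ (containing $r$, every $B_j$, and the embedded bridging pseudopaths) is embedded, and the remainder of $T$ is a forest of subtrees each attached to the skeleton via a single anchor whose image is fixed. Extend $\phi$ greedily along any valid ordering of $V(T)$: by $\gamma n$-largeness and since at most $\nu n$ vertices have been used, each newly-processed vertex's anchor has at least $\gamma n/2$ unused neighbors in $V(H)$.

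The main obstacle is handling the boundary cases of this iterative procedure. The pseudopath from $r$ to $B_1$ might be shorter than $\Delta k(\ell+3k)$ because $r$ is not constrained by the separation of $\calB$; this can be handled either by choosing $\iota$ so that no $B_j$ is too close to $r$ in $T$ (feasible since $|\iota(\calA)|\ll|\calB|$ leaves ample room to avoid bad tuples), or by first embedding a greedy initial segment from $r$ outward until the pseudopath lemma can be invoked. A secondary matter is verifying that every endpoint $(k-1)$-tuple requested by \cref{lemma:embedding_pseudopaths} lies in $\partial^o H$ with image consistent across stages, which follows from the absorbing $X$-tuple structure together with the fact that the embedded skeleton transfers its own shadow structure into $H$.
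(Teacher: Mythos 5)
Your overall strategy matches the paper's: pre-commit images of the selected $X$-tuples, bridge via pseudopaths using \cref{lemma:embedding_pseudopaths}, and greedily extend the rest. But there is a genuine gap in the step where you claim that ``the $2\Delta k(\ell+3k)$-separation of $\calB$ forces $e(P_j)\geq\Delta k(\ell+3k)$.'' At stage $j$, the already-embedded skeleton contains not only the tuples $B_1,\dotsc,B_{j-1}$ but also the interiors of the bridging pseudopaths $P_1,\dotsc,P_{j-1}$. The bridging pseudopath $P_j$ exits the skeleton from some $(k-1)$-tuple $t_j$, and if $t_j$ lies deep inside a previous pseudopath $P_i$ (rather than inside some $B_i$), the separation hypothesis on $\calB$ gives you no lower bound at all on $d_T(t_j, B_j)$: the tuple $t_j$ is not in $\calB$, so pairwise separation of the $B$'s simply does not constrain it. Your suggested fix (ordering the $B_j$'s ``following a valid ordering of $T$'') does not repair this; a valid ordering carries no distance-monotonicity information.

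The paper handles exactly this subtlety: it chooses $B_1,\dotsc,B_t$ from $\calB$ ordered so that $d_T(r,B_1)\le\dotsb\le d_T(r,B_t)$, with all $d_T(r,B_j)\ge\Delta k(\ell+3k)$ (discarding at most one element too close to $r$). Under this ordering, if the exit point $t_i$ lies in $\partial P_j\setminus\partial B_j$ for some $j<i$, then $t_i$ lies on the unique pseudopath from $B_j$ to $r$, and a triangle-inequality computation combined with $d_T(r,B_j)\le d_T(r,B_i)$ forces $d_T(t_i,B_i)\ge\Delta k(\ell+3k)$. This ordering and its attendant inequalities (not just separation of $\calB$ or a valid ordering of $T$) are the missing ingredient. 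Relatedly, you should also establish the disjointness fact (present in the paper as the claim $T_i\cap(B_{i+1}\cup\dotsb\cup B_t)=\emptyset$) to ensure no $B_j$ with $j>i$ is accidentally consumed by a pseudopath before stage $j$; this too follows from the same distance-ordering argument, and without it your ``process $B_j$'s in turn'' step is not well-defined.
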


\begin{proof}
	Write $\mathcal A=\{A_1,\dots,A_t\}$, where $1\le t\le \alpha n$.
	We will abuse notation by treating $X$-tuples $B \in \mathcal{B}$ as subgraphs of $T$, consisting of the corresponding edges forming the $X$-tuple.
	We claim that there are $B_1,\dotsc, B_{t}\in \mathcal B$ such that, defining $d_T(\mathbf x, B_i)$ as the minimum of $d_T(\mathbf x, b)$ over all $(k-1)$-sets $b \in \partial T [B_i]$, we have
	\begin{align}
		\label{lem:cov:distance}
		\Delta k(\ell+3k) \leq d_T(r,B_j) \leq d_T(r, B_i)\text{ for all $1\le j < i\le t$}.
	\end{align}
	To see this, order the elements of  $\mathcal{B}$ as $B'_1, B'_2, \dotsc$ so that  $d_T(r, B'_i)$ increases.
	As $\mathcal{B}$ is $2\Delta k(\ell+3k)$-separated, and using the triangle inequality, we see that 
	\[2\Delta k(\ell+3k)\le d_T(B'_1,B'_2) \le d_T(B'_1,\mathbf x)+d_T(B'_2,\mathbf \mathbf x) \leq 2 d_T(B'_2, \mathbf x)\]
	and thus $d_T(B'_2, \mathbf x) \ge \Delta k(\ell+3k)$.
	Since $|\mathcal{B}|\ge \mu n \gg \alpha n  \ge t$,
	we can delete $B'_1$ from $\mathcal{B}$ if necessary, and delete more sets $B'_i$ until size exactly $t$ is reached.
	After relabelling, we arrive at sets $B_1, \dotsc, B_t$ satisfying~\eqref{lem:cov:distance}.
	
	Set $T_0:=\emptyset$.
	Given $i \in [t]$ and $T_{i-1} \subseteq T$,  define $T_i$ as follows.
	Let $t_i \in \partial T_{i-1}$ and $b_i \in \partial B_i$ such that $d_T(t_i, b_i)$ is minimised (if $i = 1$, select $t_1 = r$ instead).
	Let $P_i$ be the unique $(t_i, b_i)$-pseudopath in $T$, and let $T_{i} = T_{i-1} \cup P_i \cup B_i$.
	Then $T_0, T_1, \dotsc, T_t$ satisfy the following properties.
	\begin{claim}
		For all $1 \leq i \leq t$, $T_i$ is a subtree of $T$, and
		\begin{enumerate}[{\upshape(Q1)}]
			\item $T_i \subseteq T_{i+1}$ if $i < t$,
			\item $(B_1 \cup \dotsb \cup B_i) \subseteq T_i$, and
			\item $T_i \cap ( B_{i+1} \cup \dotsb \cup B_t ) = \emptyset$. \label{item:coveringtuples-after}
		\end{enumerate}
	\end{claim}
	Indeed, the only property which is not immediate from construction is~\ref{item:coveringtuples-after}.
	Suppose the property failed, and let $i$ be a minimum integer for which it fails.
	Then there exists a $j > i$ such that $B_j \cap T_i \neq \emptyset$.
	By minimality of $i$, $B_j \cap T_{i-1} = \emptyset$, and since $\mathcal{B}$ are pairwise disjoint subgraphs, $B_j \cap B_i = \emptyset$.
	Thus $B_j \cap (P_i \setminus B_i) \neq \emptyset$.
	Since $P_i$ was the unique minimum-length pseudopath between $T_{i-1}$ and $B_i$ in $T$, this implies $d_T(\mathbf x, B_j) < d_T(\mathbf x, B_i)$, contradicting~\eqref{lem:cov:distance}.
	
	For $i\ge 0$, we will now construct embeddings $\phi_i:V(T_i)\to V(H)\setminus (A_{i+1}\cup\dots\cup A_t)$ such that
	for $i \ge 1$, $\phi_i$ extends $\phi_{i-1}$, and such that $A_i=\phi_i(B_i)$ is $X$-covered by $\phi_i$.
	We start by setting $\phi_0(\mathbf x)=\mathbf y$.	
	Now assume that $i\ge 1$, and suppose we have embedded $T_{i-1}$ with the embedding $\phi_{i-1}$.
	By~\ref{item:coveringtuples-after}, the image of $B_i$ has not been defined in $\phi_{i-1}$.
	We begin by setting $\phi'_i(B_i)=A_i$,	in a way that $A_i$ is $X$-covered by $\phi'_1$.
	Recall that, by definition, $T_i$ is the union of $T_{i-1}$,
	$B_i$, and a $(t_i, b_i)$-pseudopath $P_i$, for some $b_i \in \partial B_i$ and some $t_i \in \partial T_i$ (or $t_i = \mathbf x$ if $i = 1$). Note that $H_i := H \setminus (A_{i+1} \cup \dotsb \cup A_{t} )$ has at least $n' = n - (t-1)(h+1) \ge (1 - \alpha(h+1))n \ge (1 - \gamma/3) n$ vertices, as by assumption, $\alpha h \ll \gamma$.
	A similar calculation entails that $H_i$ is $(\gamma n'/2)$-large.
	We claim that 
	\begin{equation}\label{disss}
	d_T(t_i, b_i)\ge \Delta k(\ell+3k).
	\end{equation}
	Then, we can use Lemma~\ref{lemma:embedding_pseudopaths} to find an embedding $\phi_i:V(P_i)\to V(H_i)$ which extends both $\phi_{i-1}$ and $\phi'_i$, and this completes step $i$.
	
	So let us show~\eqref{disss}. Assume otherwise, and let $1 \leq j < i-1$ be the minimum index such that $t_i \in \partial T_j$.
	By minimality, we have $t_i \in \partial (P_j \cup B_j)$.
	If $t_i \in \partial B_j$, then $P_i$ is a pseudopath from $B_i$ to $B_j$,
	and since $\mathcal{B}$ is $2 \Delta k (\ell + 3k)$-separated we then have $d_T(b_i,t_i) \ge 2 \Delta k (\ell + 3k)$, and we are done.
	Assume instead that $t_i \in \partial P_j \setminus \partial B_j$.	
	Let $b_j \in \partial T_j$ be such that $d_T(b_j, \mathbf x) = d_T(B_j, \mathbf x)$.
	Note that $t_i$ must lie on the unique pseudopath in $T$ from $b_j$ to $\mathbf x$, and thus
	\begin{align*}
		 d_T(B_j, t_i) + d_T(t_i, \mathbf x) = d_T(B_j, \mathbf x)
		  \le d_T(B_i, \mathbf x)
		  \le d_T(b_i,t_i)+d_T(t_i, \mathbf x),
		 \label{equation:distances}
	\end{align*}
	where the second inequality comes from~\eqref{lem:cov:distance}.
	As furthermore
	\[2 \Delta k(\ell+3k) \le d_T(B_{j},B_i) \leq d_T(b_i,t_i)+d_T(B_{j},t_i) < \Delta k(\ell+3k)+d_T(B_{j},t_i),\]
	we obtain $d_T(b_i,t_i)\ge d_T(B_{j},t_i)>\Delta k(\ell+3k)$, contrary to our assumption.
	Thus~\eqref{disss} holds.
	
	Having defined all $\phi_i$, we note that  each absorbing tuple in $\mathcal A$ is  $X$-covered by $\phi_t$.
	We extend~$\phi_t$ to an embedding $\phi$ of all of $V(T)$.
	We can get from $T_t$ to $T$ by iteratively adding leaves,
	thus the embedding can be found in a greedy fashion since $\delta_{k-1}(H) \ge \gamma n$, and at each step there are at most $|V(T)| \leq \nu n \leq \gamma n / 2$ used vertices.
\end{proof}

\section{Proof of the Main Theorem}\label{section:main-theorem} 
We now assemble all results from the previous sections
 in order to prove  Theorem~\ref{theorem:spanning-codegree}.
The proof is divided into three main steps.

We start  the first step by finding a small subtree $T'\subseteq T$ of linear size which we use to build the absorbing structures. First, using Proposition~\ref{proposition:findingfamilyinT} we find a $(k-1)$-tree $X$ such that $T'$ contains linearly many well-separated $X$-tuples, and then, with the help of Proposition~\ref{proposition:absorbingtuples} and Lemma~\ref{lemma:usingchernoff}, we find a family $\mathcal A$ of disjoint absorbing $X$-tuples in the host graph $H$ such that every $k$-tuple in $H$ has linearly many absorbing $X$-tuples in $\mathcal A$.
We then use the \nameandref{lemma:coveringtuples} to 
embed $T'$ in $H$, covering every tuple in $\mathcal A$.

In the second step, we will find an almost spanning subtree $T'' \subseteq T-T'$ and embed it  following the regularity method.
The \nameandref{theorem:weakregularity} gives a regular partition of the vertices of $H$ and  using Lemma~\ref{lemma:matching}, we find an almost spanning matching $\mathcal M$ in the corresponding reduced graph.
We find  a \nameandref{beta-decomp} of $T''$
and use the \nameandref{lem:embedding} to map the small parts of this decomposition into edges of $\mathcal M$\footnote{Actually, for the second step, we will consider a slightly more general setting than the one given by a regular partition.
This will allow us to treat in an unified way the proof of Theorem~\ref{theorem:spanning-codegree} and Theorem~\ref{theorem:spanning-typical}.}. In the third and last step, we  finish the embedding by using the  \nameandref{lemma:finishingembedding}.

We begin with a lemma which essentially covers all of the second step outlined above. First, we need a definition.
Let $H$ be a $k$-graph on $m$ vertices and let $\eps,d>0$. We say that $H$ is \emph{$(\eps,d)$-uniformly dense} if for all pairwise disjoint sets $W_1,\dotsc,W_k\subseteq V(H)$ with $|W_i| = h \geq \eps m$, $i\in [k]$, we have
\begin{align}\label{bound:uniformly:dense}e(W_1,\dots,W_k)\ge d h^k.\end{align}
If $H$ is $k$-partite, with parts $V_1,\dots, V_k$, we say that $H$ is \emph{$k$-partite $(\eps,d)$-uniformly dense} if for all sets $W_1\subseteq V_1,\dotsc, W_k\subseteq V_k$ with $|W_i| = h \geq \eps m$, $i\in [k]$, the bound~\eqref{bound:uniformly:dense} holds.
\begin{definition}[Uniformly dense perfect matching]\label{def:udpm}
	For $\eps,d>0$ and $t\in\mathbb N$,
	we say $\mathcal M=\{(V_{1}^{i},\dots,V_k^{i})\}_{i\in[t]}$ is an \emph{$(\eps,d)$-uniformly dense perfect matching} of a $k$-graph~$H$ if
	\begin{enumerate}[(M1)]
		\item { $\{V_{a}^{i}\}_{a\in [k], i\in[t]}$ partitions $V(H)$,}
		\item $|V_a^{i}|=|V_{b}^{j}|$ for all $i,j\in [t]$, $a,b\in[k]$, and
		\item $H[V_1^{i},\dots,V_k^{i}]$ is $k$-partite $(\eps,d)$-uniformly dense for each $i\in [t]$.
	\end{enumerate}
\end{definition}
The following proposition will be used in the second step of the proof of Theorem~\ref{theorem:spanning-codegree}, providing us with an embedding of an almost spanning tree in any graph having an $(\eps,d)$-uniformly dense perfect matching.

\begin{proposition}[Embedding almost spanning trees]\label{proposition:almost-spanning}
	Let $\Delta, k \ge 2$,
	let $\theta \ll d, 1/T_0$, and 
	let $1/n \ll \mu \ll \theta \ll \eps, \delta \ll \alpha, \gamma, 1/k, 1/\Delta$.
	Let $H$ be a $\gamma n$-large $k$-graph on $n$ vertices with a $(\delta,\mu,2)$-reservoir $R$. Let $\mathcal M$ be
	 an $(\eps,d)$-uniformly dense perfect matching of $H-R$ with $|\mathcal M|\le T_0$.
	Let $(T, \mathbf x)$ be a rooted $k$-tree with $|V(T)|\le (1-\alpha)n$  and $\Delta_1(T) \leq \Delta$.
	Then $H$ contains a copy of $T$. Moreover, for any $\theta$-extensible edge $e$, and for any $f\in\partial H$ with $ f\subseteq e$, any ordering  $\mathbf f\in\partial^\circ H$ of $f$ can be chosen as the image of $\mathbf x$. 
\end{proposition}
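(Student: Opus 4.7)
I would decompose $T$ into a bounded number of small subtrees $D_1, \dotsc, D_m$ and embed them one by one via the \nameandref{lem:embedding}, routing each subtree through the reservoir $R$ and into a matching edge of $\mathcal{M}$. To set up, introduce auxiliary constants $\ell := (2k+1)\lfloor k/2\rfloor + 2k$, $d := \ell + 2$, and $\beta > 0$ with $\mu, \theta \ll \beta \ll \eps, \delta, 1/T_0$. Apply \cref{lemma:flattening} to produce a layering $\mathcal{L}$ of $(T, r)$, and then \nameandref{prop:cut:trees} with parameters $\beta, d$ to partition $T$ into rooted $k$-subtrees $(D_i, s_i)_{1 \le i \le m}$ with $m \le 2\Delta^d / \beta$, $|E(D_i)| \le \beta n$, $s_1 = r$, and, for $i \ge 2$, $s_i \in \partial D_{p(i)}$ for some $p(i) < i$ with rank at least $d$ in $D_{p(i)}$'s inherited layering.

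Next, I would build partial embeddings $\phi_0 \subseteq \phi_1 \subseteq \dotsb \subseteq \phi_m$ where $\phi_i$ embeds $D_1 \cup \dotsb \cup D_i$ into $H$, by induction on $i$. Set $\phi_0(r) := f$ in the prescribed ordering, $f_1 := f$, $e_1 := e$. For the inductive step, writing $f_i := \phi_{i-1}(s_i)$, the crucial observation is that $f_i$ lies inside some $\theta$-extensible edge $e_i$ of $H$: for $i = 1$ this is the input $e$, and for $i \ge 2$ it follows because the rank of $s_i$ in $D_{p(i)}$ is at least $d = \ell+2$, so every vertex of $s_i$ lies in a layer of index $\ge \ell+2$ of $D_{p(i)}$'s inherited layering, whence any edge of $D_{p(i)}$ containing $s_i$ has all vertices in layers of index $\ge \ell + 1$, and by property~\ref{keylemma:4} of \cref{lem:embedding} applied at step $p(i)$ its image under $\phi_{p(i)}$ is $\theta$-extensible; take $e_i$ to be this image. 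Then pick a matching edge $M_{j_i} \in \mathcal{M}$ with sufficient free capacity, and apply \cref{lem:embedding} with input $(D_i, s_i, \mathcal{L}^{s_i})$, edge $e_i$, tuple $f_i$, reservoir $R$, and clusters $W_1, \dotsc, W_k$ equal to the free portions of $M_{j_i}$'s clusters (permuted so that, by property~\ref{keylemma:3}, the largest partition class of $D_i$ lands in the cluster with most free space). Property~\ref{keylemma:1} then guarantees $\phi_i$ extends $\phi_{i-1}$ at $s_i$, so the construction goes through.

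The main obstacle is verifying the hypotheses of \cref{lem:embedding} at every step, in particular that some $M_{j_i}$ with the required free density is available. By \cref{proposition:boundedclusters,proposition:boundedsizecolourclasses}, each $D_i$ uses at most $\Delta^{\ell+k-1} = O(1)$ reservoir vertices and has partition classes balanced within a factor $\Delta^{k-1}$. Since $m = O(1)$, the reservoir consumption stays $O(1) \ll |R| \asymp \delta n$. For the matching edges, at any step the total embedded mass inside clusters is at most $(1-\alpha)n$ against a capacity of $(1-\delta)n$, so averaging gives some $M_j$ whose every cluster has at least $(\alpha-\delta)n/(kT_0) - O(1)$ free vertices, comfortably exceeding the $\Delta^{k-1}\beta n$ needed to host the partition classes of $D_i$, since $\beta \ll \alpha/(T_0\Delta^{k-1})$. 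Moreover, the $(\eps, d)$-uniform density of $H[V_1^{j_i},\dotsc,V_k^{j_i}]$ carries over to its restriction to free sub-clusters of size $\ge cn$ (with $c \asymp 1/T_0$) as density at least $d - \eps/c^k \ge d/2$, since $\eps \ll d/T_0^k$; this verifies the density and size hypotheses of \cref{lem:embedding}. The ``moreover'' clause is automatic from $\phi_0(r) = f$.
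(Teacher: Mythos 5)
Your proof follows the same three-stage strategy as the paper: layer and cut $T$ into $O(1)$ small rooted subtrees via \cref{prop:cut:trees}, then iterate \cref{lem:embedding}, routing the first few layers of each subtree through the reservoir from the already-placed root $\phi_{p(i)}(s_i)$ and the rest into a matching edge of $\mathcal{M}$, with later roots always landing inside $\theta$-extensible edges thanks to property~\ref{keylemma:4}. Your choice of the decomposition parameter $d := \ell + 2$ is in fact a little more careful than the paper's (which effectively takes it one smaller) and cleanly guarantees that the unique edge of $D_{p(i)}$ containing $s_i$ has all its vertices in layers of index at least $\ell + 1$, so that~\ref{keylemma:4} really does apply to it.

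The one step that needs tightening is ``averaging gives some $M_j$ whose every cluster has at least $(\alpha-\delta)n/(kT_0) - O(1)$ free vertices.'' Plain averaging from the total amount of free space does not yield a matching edge in which \emph{all} $k$ clusters simultaneously have large free space: a priori one cluster in each matching edge could be nearly full while the others are nearly empty, so every matching edge could have a nearly-full cluster even though the total free space is large. The averaging deduction is only valid once one knows the within-edge balance invariant (the paper's (P3)): within each matching edge $j$, the quantities $|\phi_{i}^{-1}(V_a^j)|$ for $a\in[k]$ differ pairwise by at most $cm$. This balance is precisely what your permutation (largest partition class of $D_i$ into the emptiest cluster) preserves, since $|\phi_i^{-1}(V_a^j)| - |\phi_i^{-1}(V_b^j)|$ is bounded by the maximum of its previous value and $|V(D_i)| \le \beta n$. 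You describe the mechanism but do not state the invariant or feed it into the averaging step, so as written the existence of a suitable matching edge is not justified. (Also, you should pass the remaining reservoir $R \setminus \phi_{i-1}(V(T))$ rather than $R$ itself to \cref{lem:embedding}, which is an easy fix given the $O(1)$ reservoir consumption you already note.)
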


\begin{proof}
Let $\ell = \lfloor k/2 \rfloor (2k+1) + 2k + 1$.
Introduce  new constants $c, \beta$ satisfying $\mu \ll \beta \ll \eps \ll c \ll \alpha, 1/k$ and $\beta \ll 1/T_0, d$.

Let $\mathcal L$ be the layering for $(T,\mathbf x)$, which exists by \cref{lemma:flattening}.
We invoke \cref{lemma:cut:trees}, with parameters $\beta$, $\Delta$ and $\ell$, to obtain a $(\beta,\ell)$-decomposition of $(T,\mathbf x,\mathcal L)$ into $p\le 2\Delta^{\ell}/\beta$ parts.
That is, we find a collection of rooted trees $\{(D_i,\mathbf s_i)\}_{1 \leq i \leq p}$ such that
\begin{enumerate}[(D1)]
	\item\label{decom:1} $E(T)=\bigcupdot_{i\in[p]}E(D_i)$,
	\item\label{decom:2} $|E(D_i)|\leq \beta n$ for each $i\in[p]$,
	\item \label{decomp:5} $\mathbf s_1=\mathbf x$ and $\mathbf s_i$ is $\mathcal L$-layered,
	\item\label{decom:3} $(V(D_j)\setminus \mathbf s_j)\cap V(D_i)=\emptyset$ for all $1\le i<j\le p$, and
	\item\label{decom:4} for each $2\le j\le p$ there is a unique $i<j$ such that $\mathbf s_j\in \partial D_i$ and the rank of $\mathbf s_j$ in~$D_i$ is at least $\ell$ (in the inherited layering of $D_i$ from $\mathcal L$).
\end{enumerate}
Let $e\in H$ be an arbitrary $\theta$-extensible edge in $H$, let $f\subseteq e$ and let $\mathbf f $ be an ordering of $f$. Say $\mathcal M=\{(V_1^{h},\dots, V_k^{h})\}_{h\in[t]}$ are the clusters of the given uniformly dense perfect matching and set $m:=|V_1^1|$.

We begin our embedding by setting $\phi_0(\mathbf x):= \mathbf f$.
Now we will construct successively, for all $i \in [p]$,
an embedding $\phi_i:V(D_1\cup\dots\cup D_i)\to V(H)$ such that $\phi_{i}$ extends $\phi_{i-1}$, and 
\begin{enumerate}[(P1)]
	\item\label{almost:emb:1} if defined, $\phi_i(\mathbf s_j)$ is contained in a $\theta$-extensible edge for all $j>i$, 
	\item\label{almost:emb:2} $|\phi_i^{-1}(R)|\le i\Delta^{\ell+1}$, and
	\item\label{almost:emb:3} for every $j\in[t]$,
	$\max_{a, b \in [k]}\big ||\phi_i^{-1}(V_a^{j})|-|\phi_i^{-1}(V_b^{j})|\big|\le c m. $
\end{enumerate}
Having done this, then $\phi_p$ will be the desired embedding of $T$.

In step $i$, assume we have constructed $\phi_{i-1}$ satisfying \ref{almost:emb:1}--\ref{almost:emb:3}.
Our aim is to embed $(D_{i},\mathbf s_{i})$.
We claim that $s_{i}$ is already embedded into some $(k-1)$-tuple $\phi_{i-1}(\mathbf s_{i})$ contained in some $\theta$-extensible edge.
Indeed, if $i = 1$, we have $\mathbf s_1 = \mathbf x$ by~\ref{decomp:5} and we are done by the choice of $\phi_0$.
Otherwise, if $i \ge 2$, then~\ref{decom:4} implies $\phi_{i-1}(\mathbf s_i)$ is defined and we are done by \ref{almost:emb:1}.

We claim that there is a $j\in[t]$ such that for each $a\in[k]$,
\begin{align}
	\label{eq:size}|V_a^{j}|\ge|\phi_{i-1}^{-1}(V_a^{j})|+cm.
\end{align}
Indeed, otherwise, for each $j\in[t]$ there is an  $a\in [k]$ such that~\eqref{eq:size} does not hold.
Then, by~\ref{almost:emb:3}, also all other $V^j_b$ are almost full, and we calculate
\begin{eqnarray*}
	\textstyle|\phi_{i-1}^{-1}(V(H))| \ge \displaystyle\sum_{j\in[t], a\in[k]}|\phi_{i-1}^{-1}(V_a^j)| > \sum_{j\in[t], a\in[k]}(|V_{a}^j|-2cm)
	 = n-|R|-2cmtk
	  \geq (1-\alpha)n,
\end{eqnarray*}
a contradiction as $|V(T)|\le (1-\alpha)n$. So,~\eqref{eq:size} holds for, say, index $j$.
 
Let $\sigma$ be a bijection satisfying
\begin{equation}\label{eq:order2}
|W_1|\geq\dotsb\geq|W_k|,
\end{equation} 
where $W_a:=V_{\sigma (a)}^{j}\setminus \phi_{i-1}(V(T))$.
Because of \eqref{eq:size}, $|W_i|\ge cm$ for each $i\in [k]$, so we can select $W'_i \subseteq W_i$ of size exactly $cm$ for each $i$.
Therefore, using that $H[V_1^j,\dots,V_k^j]$ is $k$-partite $(\eps,d)$-uniformly dense on $km$ vertices and $\eps \ll c, d, 1/k$, we have
\begin{align}
	e(H[W_1,\dots,W_k])
	& \ge e(H[W'_1,\dots,W'_k]) \ge d(cm)^k.
\end{align}
Also, note that since $m = |V(H-R)|/(k T_0)$ and $R$ is a $(\delta, \mu, 2)$-reservoir, we have $m \ge n/(2 k T_0)$ and thus $|W_a| \ge cn/(2 k T_0)$ for each $1 \leq a \leq k$.

Set $R_{i-1}:=R\setminus \phi_{i-1}(D_1 \cup \dotsm \cup D_{i-1})$.
Recall that $i \leq p \leq 2 \Delta^{\ell} \beta^{-1}$.
Using~\ref{almost:emb:2}, we get $|R_{i-1}|\ge|R|-p\Delta^{\ell+1} \ge |R| - 2 \Delta^{2\ell + 1} \beta^{-1}$.
Since $R$ is a $(\delta, \mu, 2)$-reservoir for $H$ and $1/n \ll 1/\Delta, 1/k, \beta, \mu, \delta$,
we deduce that $R_{i-1}$ is a $(\delta, 2 \mu,2)$-reservoir for $H$.
Let $\mathcal{L}_i = (L_1, \dotsc, L_u)$ be the inherited layering $\mathcal{L}^{\mathbf s_i}$ for $(D_i, \mathbf s_i)$ from $\mathcal{L}$, this is well-defined since $\mathbf s_i$ is $\mathcal{L}$-layered by \ref{decomp:5}.

Use \refandname{lem:embedding} with
\begin{center}
\begin{tabular}{c|c|c|c|c|c|c|c|c|c|c|c|c}
	object/parameter & $\ell - 1$ & $R_{i-1}$ & $\delta$ & $2 \mu$ & $c/(2 k T_0)$ & $d/2$ & $D_i$ & $\mathbf s_i$ & $\mathcal{L}_i$ & $\phi_{i-1}(\mathbf s_i)$  \\ 	\hline
	playing the role of & $\ell$ & $R$ & $\gamma$ & $\mu$ & $c$ & $d$ & $T$ & $\mathbf x$ & $\mathcal{L}$ & $\mathbf f$  \\
\end{tabular}
\end{center}
to find an embedding $\phi'_i$ of $D_i$ such that
\begin{enumerate}[(E1)]
	\item \label{item:almostspanning-iteration-root} $\phi'_i(\mathbf{s_i}) = \phi_{i-1}(\mathbf{s_i})$,
	\item \label{item:almostspanning-iteration-reservoir} $(\phi'_i)^{-1}(R_{i-1} \cup \phi_{i-1}(\mathbf s_i) ) = \bigcup_{r=1}^{\ell-1} L_r$,
	\item \label{item:almostspanning-iteration-order} $\phi'_i(V_{[\ell, u]}) \subseteq W_1 \cup \dotsb \cup W_k$, with $|(\phi'_i)^{-1}(W_1)| \ge \dotsb \ge |(\phi'_i)^{-1}(W_k)|$,
	\item \label{item:almostspanning-iteration-nextroots} $\phi'_i(e')$ is $\theta$-extensible, for each $e'$ in $E(D_i[V_{[\ell, u]}])$.
\end{enumerate}
By \ref{decom:3}--\ref{decom:4} and~\ref{item:almostspanning-iteration-root},
we get that $\phi_i = \phi_{i-1} \cup \phi'_i$ is an extension of $\phi_{i-1}$ which embeds $T[V(D_1 \cup \dotsb \cup D_i)]$.
It is only left to check that $\phi_{i}$ satisfies \ref{almost:emb:1}--\ref{almost:emb:3}.

We check~\ref{almost:emb:1} holds for $\phi_i$.
Since~\ref{almost:emb:1} holds for $i-1$, we only need to consider those $j > i$ with $\mathbf s_j \in \partial D_i$.
By~\ref{decom:4}, each $\mathbf s_j \in \partial D_i$ with $j>i$ has rank at least $\ell$ in $\mathcal{L}_i$, namely it must be contained in $V_{[\ell, u]}$.
Thus, by \ref{item:almostspanning-iteration-nextroots}, we know that all such $\phi_i(\mathbf s_j)$ are contained in a $\theta$-extensible edge, as required.

To see~\ref{almost:emb:2} holds, we note that since~\ref{almost:emb:2} holds for $i-1$, $|\phi_i^{-1}(R \setminus R_{i-1})| = |\phi_{i-1}^{-1}(R \setminus R_{i-1})| \leq (i-1) \Delta^{\ell + 1}$,
so it only remains to show that $|\phi_i^{-1}(R_{i-1})| \leq \Delta^{\ell+1}$.
Note that by \cref{proposition:boundedclusters} we have that $|L_r| \leq \Delta^{r - 1}$ holds for all $1 \leq r \leq \ell - 1$,
and together with~\ref{item:almostspanning-iteration-reservoir} we get $|\phi_i^{-1}(R_{i-1})| \leq \sum_{r=1}^{\ell-1} |L_r| \leq \sum_{r=1}^{\ell-1} \Delta^{r - 1} \leq \Delta^{\ell+1}$, as required.

Finally, for \ref{almost:emb:3}, observe that \ref{item:almostspanning-iteration-order} implies that
\begin{align*}
	|V(D_i)| \ge |\varphi_i^{-1}(W_1)\setminus \varphi_{i-1}^{-1}(W_1)|\ge \dotsb \ge |\varphi_i^{-1}(W_k)\setminus \varphi_{i-1}^{-1}(W_k)|.
\end{align*}
Using~\eqref{eq:order2}, we get, for any $a, b \in [k]$ with $a < b$,
\[\Big||\phi_{i}^{-1}(V_{a}^{j})|-|\phi_{i}^{-1}(V_{b}^{j})|\Big|\le\max\Big\{\Big||\phi_{{i-1}}^{-1}(V_{a}^{j})|-|\phi_{{i-1}}^{-1}(V_{b}^{j})|\Big|, |V(D_i)| \Big\}.\]
Since~\ref{almost:emb:3} holds for $i-1$, the first term in the maximum is bounded from above by $cn$,
and $|V(D_i)| \leq \beta n \leq c n$, follows from~\ref{decom:2} and $\beta \ll c$.
\end{proof}

Now we are ready to prove Theorem~\ref{theorem:spanning-codegree}.

\begin{proof}[Proof of Theorem~\ref{theorem:spanning-codegree}]
	We begin by noting that $\delta_{k-1}(H) \ge (1/2 + \gamma)n$ implies that $H$ is $(2 \gamma n)$-large, and also that every vertex is contained in an edge. \medskip
	
	\noindent\emph{Step 1: Finding the absorbing structures.}
	For the remainder of the proof, we choose $\ell = (2k+1)\lfloor{k/2}\rfloor+2k$ and also
	\[1/n_0 \ll \nu_3 \ll \nu_2 \ll \delta, \nu_1 \ll \alpha \ll \theta_0 \ll \gamma, 1/k, 1/\Delta, 1/\ell.\]
	Let $T$ be any given tree on $n \ge n_0$ vertices such that $\Delta_1(T)\le\Delta$.
	We choose an arbitrary tuple $\mathbf x\in\partial^\circ T$ as the root of $T$ and, using Lemma~\ref{lemma:flattening}, we find a  layering $\mathcal L=(L_1,\dots, L_m)$ for $(T,\mathbf x)$.
	Let $\mathbf x'\in\partial T$ be an $\mathcal L$-layered tuple such that $|E(T_{\mathbf x'})|\ge \alpha n/(2\Delta)$ with the highest possible rank.
	Then, using \cref{proposition:cut1} for $(T_{\mathbf x'},\mathbf x',\mathcal L^{\mathbf x'})$, and by the maximality of~$\mathbf x'$ we deduce that
	\begin{align}\label{alllpha}
		\frac{\alpha n}{2 \Delta} & \le |E(T_{\mathbf x'})|\le\alpha n.
	\end{align}
	Since $\delta \ll 1/k, 1/\Delta, 1/\ell$,
	we can use Proposition~\ref{proposition:findingfamilyinT} with $2\Delta k(\ell+3k)$ and $\delta$ in place of $\ell$ and $\mu$, respectively.
	This yields a $(k-1)$-tree $X$ with $h\le\Delta+k-1$ vertices,
	and a $2\Delta k(\ell+3k)$-separated set $\mathcal B$ of $X$-tuples of $T_{\mathbf x'}$ such that $|\mathcal B|\ge \delta n$.
	Recall that $H$ is $(2\gamma n)$-large, there are no isolated vertices, and additionally $1/n \ll \nu_1 \ll \gamma, 1/k, 1/\Delta$ and $h \leq \Delta+k-1$.
	Thus, Proposition~\ref{proposition:absorbingtuples} (with $\nu_1$ playing the role of $\beta$) tells us that $|\Lambda_X(v_1,\dots,v_k)|\ge \nu_1 n^{h+1}$ for every $k$-tuple of distinct vertices $(v_1,\dots,v_k) \in V(H)^k$.
	The hierarchy $\nu_2 \ll \nu_1, 1/k, 1/h$ allows us to use Lemma~\ref{lemma:usingchernoff}, with parameters $\nu_1$, $\nu_2$ in place of $\beta, \alpha$ respectively, to deduce the existence of a family $\mathcal A \subseteq \Lambda_X$ of size at most $\nu_2 n$, such that 
	\begin{align}
		\text{for every $k$-tuple $(v_1,\dots,v_k)$ we have }|\Lambda_X(v_1,\dots,v_k)\cap \mathcal A| \ge \frac{\nu_1 \nu_2}{8} n \ge 2 \nu_3 n, \label{equation:final-absorber}
	\end{align}
	where the last inequality follows from $\nu_3 \ll \nu_2, \nu_1$.
	
	Bounding very crudely, we deduce that $H$ has at least $\gamma \binom{n}{k}$ edges.
	Since $\theta_0 \ll \gamma, 1/k$, an application of Lemma~\ref{lem:extensible} implies the existence of a $\theta_0$-extensible edge $e_0$ which is vertex-disjoint of all the subgraphs in $\mathcal A$.
	Let $f_0 \subseteq e_0$ be an arbitrary $(k-1)$-set and let $\mathbf f_0$ be an ordering of $f_0$.
	Finally, since $1/n \ll \nu_2 \ll \delta \ll \alpha \ll \gamma, 1/h$, we can use Lemma~\ref{lemma:coveringtuples}, with $\nu_2, \delta, \alpha, 2 \gamma, \mathbf x', \mathbf f_0$ playing the roles of $\alpha, \mu, \nu, \gamma, \mathbf x, \mathbf y$ respectively, to find an embedding $\phi':V(T_{\mathbf x'})\to V(H) \setminus (e_0 \setminus \mathbf f_0)$ which $X$-covers each tuple in $\mathcal A$,
	such that $\phi'(\mathbf x') = \mathbf f_0 \subseteq e_0$, and $e_0 \setminus \mathbf f_0$ is disjoint from $\phi'(V(T_{\mathbf x'}))$. \medskip
		
	\noindent \emph{Step 2: Finding an almost spanning tree.}
	We introduce new constants by letting
	\[1/n \ll \mu \ll \theta \ll 1/T_0 \ll 1/t_0 \ll \eps'\ll \eta \ll \eps \ll \gamma'\ll d\ll \nu_3.\]
	Let $H'= H - \phi'(V(T_{\mathbf x'})) + e_0$ and $n' = |V(H')|$.
	Then, by~\eqref{alllpha} we have $|V(T_{\mathbf x'})| \leq \alpha n + k-1 \leq 2 \alpha n$, and thus $n' \ge (1-2\alpha)n$.
	Note that $e_0 \subseteq V(H')$.
	
	The choice of $\alpha \ll \gamma$ ensures that $H'$ has minimum codegree at least $(1/2+2\gamma/3)n'$.
	Using $1/n \ll \mu \ll \gamma'$, Lemma~\ref{lem:reservoir} provides us with a $(\gamma',\mu,2)$-reservoir $R$ for $H'$.
	Now set $H'' = H'- R$ and $n'' = |V(H'')|$.
	Since $\gamma' \ll \alpha$, we deduce that $n'' \ge (1 - 3 \alpha)n$.	
	
	Now we prepare the setup to apply regularity tools.
	Since $1/n, 1/T_0 \ll 1/t_0, 1/k, \eps'$,
	an application of the \nameandref{theorem:weakregularity} to $H''$, with parameters $\eps'$ and $t_0$ as input,
	yields an $\eps'$-regular partition $\mathcal{P} = \{V_0, V_1, \dotsc, V_t\}$ of $V(H'')$, for some $t_0\le t\le T_0$.
	Using Lemma~\ref{lemma:matching}, $1/t_0 \ll \eps' \ll 1/k, \gamma, \eta$ and $2d\ll \gamma$, we know that the $(\eps',2d)$-reduced graph $R_{2d}(H'')$ contains a matching $\mathcal M$ covering at least $(1-\eta)t$ clusters.
	Since each edge $(V_{i_1},\dots,V_{i_k})\in\mathcal M$ is $(\eps',d')$-regular for some $d' \ge 2d$,
	by our choice of $\eps' \ll \eps$, we deduce that $H''[V_{i_1},\dots,V_{i_k}]$ is $k$-partite $(\eps,2d - \eps)$-uniformly dense,
	and thus $(\eps,d)$-uniformly dense.
	Let $V_{\mathcal M} \subseteq V(H'')$ consist of the union of clusters covered by $\mathcal{M}$ in the reduced graph.
	Thus, $H''[V_{\mathcal M}]$ has an $(\eps,d)$-uniformly dense perfect matching with $p \leq T_0/k$ edges.
	
	We wish to apply Proposition~\ref{proposition:almost-spanning}.	For this we need a $k$-graph having a reservoir and a uniformly dense perfect matching, which we plan to be $R$ and $V_{\mathcal M}$, respectively.
	We also need the $k$-graph to contain our desired root $e_0$, which is why we set $R' = (R \cup e_0) \setminus V_{\mathcal M}$.
	Let $H^\ast$ be the induced subgraph of $H'$ on $R' \cup V_{\mathcal M}$ and set $n^\ast = |V(H^\ast)|$.

	We now check that the requirements of Proposition~\ref{proposition:almost-spanning} are satisfied with this choice of $H^\ast, R', V_{\mathcal M}$.	
	Clearly $\{R', V_{\mathcal M}\}$ partitions $V(H^\ast)$ and $e_0 \subseteq V(H^\ast)$.
	As $\mathcal{P}$ is an $\eps'$-regular partition of $H'' = H' - R$,
	and $\mathcal{M}$ leaves at most $\eta t$ clusters uncovered, 
	each having size at most $n''/t$,
	we see that
	\[n' - n^\ast \leq |V_0| + \sum_{V_i \notin V(\mathcal{M})} |V_i| \leq \eps' n'' + \eta t n'' / t \leq \eps n',\]
	where we used $\eps', \nu \ll \eps$ and $n'' \leq n'$ in the last inequality.
	This implies  $n^\ast \ge (1 - \eps)n'$.
	Since $\eps \ll \gamma$ and since $\delta_{k-1}(H') \ge (1/2 + 2 \gamma / 3)n'$,
	we deduce that $H^\ast$ has minimum codegree at least $(1/2+\gamma/2)n^\ast$.
	In particular, $H^\ast$ is $\gamma n^\ast$-large.
	Since $R$ is a $(\gamma', \mu, 2)$-reservoir for $H'$,
	we use $n^\ast \ge (1 - \eps)n'$ and $1/n \ll \mu, 1/k$ to deduce that
	$R'$ has size $\gamma'n' \pm (\mu n' + k)$ and then $|R'|= (\gamma^\ast \pm 2 \mu)n^\ast$, for some $\gamma^\ast \leq 2 \gamma'$.
	Thus $R'$ is a $(\gamma^\ast, 2\mu, 2)$-reservoir for $H^\ast$.
	Finally, recall that $e_0$ is $\theta_0$-extensible in~$H$.
	As $1/n \ll \theta \ll \eps \ll \alpha \ll \theta_0$,
	we have $n - n^\ast \leq 4 \alpha n$,
	implying that $e_0$ is $\theta$-extensible in $H^\ast$.
	
	We set $T'=T-(T_{\mathbf x'}-\mathbf x')$ and root $T'$ at $\mathbf x'$.
	Let $v_1,\dots,v_{|V(T')|}$ be a valid ordering of $(T',\mathbf x')$ with $\mathbf x' = ( v_1, \dotsc, v_{k-1})$.
	Let $m^\ast = \lceil (1-\nu_2)n^\ast \rceil$ and $T^\ast=T'[v_1,\dots,v_{m^\ast}]$, such that $|V(T^\ast)| = m^\ast$.
	An application of Proposition~\ref{proposition:almost-spanning} with
	\begin{center}
		\begin{tabular}{c|c|c|c|c|c|c|c|c|c|c}
			object/parameter & $n^\ast$ & $2 \mu$ & $\gamma^\ast$ & $\nu_3/2$ & $d$ & $H^\ast$ & $R'$ & $\mathbf x'$&$e_0$ & $\mathbf f_0$ \\ 	\hline
			playing the role of & $n$ & $\mu$ & $\delta$ & $\alpha$ & $d$ & $H$ & $R$ & $\mathbf x$& $e$ & $\mathbf f$ \\
		\end{tabular}
	\end{center}
	yields an embedding $\phi^\ast :V(T^\ast)\to V(H^\ast)$ with $\phi^\ast (\mathbf x')= \mathbf f_0 = \phi'(\mathbf x')$.
	\medskip
	
	\noindent \emph{Step 3: Finishing the embedding.}
	The embedding $\phi'\cup\phi^\ast$  of $T_{\mathbf x'}\cup T^\ast$ fulfils all the conditions of Lemma~\ref{lemma:finishingembedding} (as $|V(T_{\mathbf x'}\cup T^\ast)| \ge m^\ast + |V(T_{\mathbf x'})| - k -1 \ge (1 - \nu_3)n$ and by~\eqref{equation:final-absorber}).
	So, we can use the Absorbing lemma (Lemma~\ref{lemma:finishingembedding}) to embed the remaining vertices $v_{m^\ast + 1},\dots,v_{|V(T')|}$.
\end{proof}

\section{Spanning trees in quasirandom hypergraphs}\label{section:quasirandom}

In this section, we prove Theorem~\ref{theorem:spanning-typical}.
Actually, we prove a slightly stronger statement since we only need the weaker notion of being {\it uniformly dense} (given right before Definition~\ref{def:udpm}).

\begin{theorem} \label{theorem:spanning-uniformlydense}
	Let $0 < 1/n \ll \delta \ll \eta, \gamma, 1/\Delta, 1/k$ and let $H_1$ and $H_2$ be $k$-graphs on the same vertex set of size $n$.
	Assume that $H_1$ is $(\eta, \delta)$-uniformly dense, and that $H_2$ is $\gamma n$-large and has no isolated vertices.
	Then, $H := H_1 \cup H_2$ contains a copy of every $k$-tree $T$ on $n$ vertices with $\Delta_1(T) \leq \Delta$.
\end{theorem}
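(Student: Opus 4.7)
The plan is to mimic the proof of Theorem~\ref{theorem:spanning-codegree}, using $H_2$ to supply all `largeness' (for absorption, reservoirs, and connecting arguments) while $H_1$ provides the dense substructure into which the bulk of $T$ is embedded. Since $H_2 \subseteq H$ is $\gamma n$-large, so is $H$, and therefore all machinery from Sections~\ref{section:connecting} and~\ref{section:absorption} applies directly to $H$ without modification.

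First, I would repeat Step~1 of the proof of Theorem~\ref{theorem:spanning-codegree} essentially verbatim to build the absorbing structure. Fix a root $r \in \partial T$, use Lemma~\ref{lemma:flattening} to obtain a layering of $(T,r)$, and select an $\mathcal{L}$-layered tuple $r'$ with $|E(T_{r'})| = \Theta(\alpha n)$. Apply Proposition~\ref{proposition:findingfamilyinT}, Proposition~\ref{proposition:absorbingtuples} and Lemma~\ref{lemma:usingchernoff} to produce a $(k-1)$-tree $X$ and an absorbing family $\mathcal{A}$ of pairwise disjoint $X$-tuples in $H$ such that every $k$-tuple of vertices is covered linearly often by $\mathcal{A}$; Lemma~\ref{lem:extensible} provides a $\theta_0$-extensible edge $e_0$ in $H$ disjoint from $\mathcal{A}$; and the Covering Lemma~\ref{lemma:coveringtuples} yields an embedding $\phi'$ of $T_{r'}$ sending $r'$ to some $f_0\subseteq e_0$ and $X$-covering every tuple of $\mathcal{A}$. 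Every tool invoked here requires only that $H$ be $\Omega(n)$-large, which is supplied by $H_2$.

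The key departure from the original proof occurs in Step~2. Set $H' = H - \phi'(V(T_{r'})) + e_0$, and use Lemma~\ref{lem:reservoir} to extract a $(\gamma',\mu,2)$-reservoir $R$ in $H'$. In the proof of Theorem~\ref{theorem:spanning-codegree}, a uniformly dense perfect matching in $H'-R$ was produced by combining weak regularity with Lemma~\ref{lemma:matching}. Here, I would construct such a matching directly: fix a (large but bounded) constant $T_0$ and partition $V(H') \setminus R$ into $kT_0$ parts of equal size $m'$ (after absorbing $O(1)$ leftover vertices into $R$), grouped arbitrarily into $T_0$ many $k$-tuples $(V_1^i, \ldots, V_k^i)$. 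For any subsets $W_a \subseteq V_a^i$, the $(\eta,\delta)$-uniform density of $H_1 \subseteq H$ gives
\[
e_H(W_1,\ldots,W_k) \;\ge\; e_{H_1}(W_1,\ldots,W_k) \;\ge\; \delta\,|W_1|\cdots|W_k| - \eta n^k,
\]
and since $n = kT_0 m'$ we may rewrite $\eta n^k = \eta T_0^k (km')^k$. Each tuple is therefore $k$-partite $(\eta T_0^k, \delta)$-uniformly dense on $km'$ vertices. Choosing $T_0$ so that $\eta T_0^k$ lies within the tolerance required by Proposition~\ref{proposition:almost-spanning} (say much smaller than $\delta$) yields the desired uniformly dense perfect matching with essentially no work.

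With a reservoir and a uniformly dense perfect matching in hand, Proposition~\ref{proposition:almost-spanning} extends $\phi'$ to an almost-spanning embedding of $T$ fixing the image of $r'$ and leaving only $O(\nu_3 n)$ vertices of $T$ unembedded; since the $\theta_0$-extensibility of $e_0$ in $H$ passes to $\theta$-extensibility in the smaller host graph after the usual constant-juggling, this application is legitimate. The Absorbing Lemma~\ref{lemma:finishingembedding}, together with the covering property of $\mathcal{A}$, then completes the embedding of $T$ into $H$. The main technical hurdle is a careful calibration of constants in a hierarchy of the form $1/n \ll \mu \ll \gamma' \ll \alpha \ll \nu_3 \ll \theta_0 \ll \delta \ll 1/T_0 \ll \eta, \gamma, 1/\Delta, 1/k$, so that $\eta T_0^k$ is small enough to serve as the regularity parameter and the reservoir, covering, and truncation parameters respect the requirements of Propositions~\ref{proposition:absorbingtuples}, \ref{proposition:almost-spanning} and Lemmas~\ref{lemma:coveringtuples}, \ref{lemma:finishingembedding}.
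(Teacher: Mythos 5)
Your high-level plan matches the paper's: supply largeness from $H_2$, skip the regularity lemma, and manufacture a uniformly dense matching directly from $H_1$'s uniform density. However, your execution of the matching construction contains a genuine error, and it is precisely where the paper does something different and simpler.

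The paper takes $\mathcal{M}$ to be a \emph{single} matching edge: it partitions $V(H'')$ into just $k$ equal parts $V_1,\dotsc,V_k$ (after shunting $O(1)$ vertices into $R$ for divisibility). Then the uniform density of $H_1$ on $V(H)$ transfers to $H''[V_1,\dotsc,V_k]$ with no rescaling, since $km' = |V(H'')|$ is of order $n$. Your version partitions into $kT_0$ parts and takes $|\mathcal{M}|=T_0$ edges, which shrinks each matching edge to $km'\approx n/T_0$ vertices; the error term in the uniform-density bound is measured against $(km')^k$, so it gets inflated by a factor of $T_0^k$ compared to the original. This is the \emph{wrong} direction for you: partitioning into more pieces only degrades the uniform density of each piece, so there is no benefit to taking $T_0 > 1$, and the hierarchy constraint $1/T_0 \ll \eta$ that forces $T_0$ large is unmotivated and actively harmful. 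The correct move is $T_0 = 1$, or at most a bounded constant chosen before $\delta$.

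Beyond the unnecessary $T_0$, the specific inequality you write — ``choosing $T_0$ so that $\eta T_0^k$ lies within the tolerance\dots say much smaller than $\delta$'' — cannot be satisfied: with the theorem's hierarchy $\delta \ll \eta$ you have $\eta T_0^k \ge \eta \gg \delta$ for every $T_0\ge 1$, and increasing $T_0$ only makes this worse. Part of the trouble stems from a notational swap you inherited from the paper: under the paper's literal definition of ``$(\eps,d)$-uniformly dense'' the first coordinate is the error coefficient and the second is the density, so ``$(\eta,\delta)$-uniformly dense'' together with $\delta\ll\eta$ would assert that the error term dominates, a nearly vacuous hypothesis. The reading the paper clearly intends (and uses in Corollary~\ref{corollary:spanning-quasirandom}, where a weakly $(\eta,\delta)$-quasirandom graph has density $\eta$ and error $\delta$) is that in the hypothesis of Theorem~\ref{theorem:spanning-uniformlydense}, $\eta$ is the density and $\delta$ is the error coefficient. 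With that correction and with $|\mathcal{M}|=1$, the matching edge is $k$-partite $(O(\delta),\eta)$-uniformly dense with $O(\delta)\ll\eta$, which is exactly what Proposition~\ref{proposition:almost-spanning} needs. So: drop the $T_0$-fold partition, use a single matching edge as the paper does, and fix the $\eta/\delta$ roles, after which the rest of your argument goes through.
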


\begin{proof}[Proof (sketch)]
	Since the proof is essentially the same as the proof of Theorem~\ref{theorem:spanning-codegree}, we only outline the major differences.
	In particular,  step 1 (construction of absorbers) is virtually the same, noting that the only property used there is that the host graph was that $H$ is $2 \gamma n$-large and has no isolated vertices, while here $H_2$ is $\gamma n$-large and has no isolated vertices.
	
	The difference in step 2 is that instead of applying regularity to obtain the uniformly dense perfect matching, we use that $H_1$ is uniformly dense.
	Indeed, suppose we have already found the equivalent of~$H'$ with a reservoir $R$, and we wish to find a uniformly dense matching covering most vertices in $H'' = H' - R$.
	Move at most $k-1$ vertices from $H''$ to $R$ we can assume $|V(H'')|$ is divisible by~$k$.
	Fix any partition $V_1, \dotsc, V_k$ of $V(H'')$ into equal-sized parts.
	Since $H_1$ is $(\eta, \delta)$-uniformly dense, $\mathcal M = \{(V_1, \dotsc, V_k)\}$ is an $(\eta,\delta)$-uniformly dense perfect matching with only one edge.
	Having found $\mathcal M$, then we can proceed with the remainder of step 2 and step 3 exactly as before.
\end{proof}

Let us now show how \cref{theorem:spanning-uniformlydense} implies the same result for 
host hypergraphs  satisfing certain {\it quasirandom properties}, namely {\it weak quasirandomness} (\cref{corollary:spanning-quasirandom}) and {\it typicality} (\cref{theorem:spanning-typical}).

The study of quasirandomness for graphs evolved in the late 1980's, a milestone being the seminal result of Chung, Graham and Wilson~\cite{ChungGrahamWilson1989} relating uniform edge distribution to other `random-like' properties. For the history of 
quasirandomness for hypergraphs we refer to the exposition in~\cite{ACHPS2018}.
The weakest  form of quasirandomness studied in the literature is as follows.
Call a $k$-graph $H$  \emph{weakly $(\eta, \delta)$-quasirandom} if for every $U \subseteq V(H)$, the number $e(U)$ of edges entirely contained in $U$ satisfies
\begin{align}
\Big|e(U) - \eta \binom{|U|}{k} \Big| \leq \delta n^{k}. \label{equation:weakquasirandom}
\end{align}
An inclusion-exclusion argument shows that every weakly $(\eta, \delta)$-quasirandom $k$-graph is $(\eta, 2^k \delta)$-uniformly dense.
Thus the following is an immediate corollary of \cref{theorem:spanning-uniformlydense}.

\begin{corollary} \label{corollary:spanning-quasirandom}
	Let $0 < 1/n \ll \delta \ll \eta, \gamma, 1/\Delta, 1/k$ and
	let $H_1$ and $H_2$ be $k$-graphs on the same vertex set of size $n$.
	Assume that $H_1$ is weakly $(\eta, \delta)$-quasirandom,
	and that $H_2$ is $ \gamma n$-large and has no isolated vertices.
	Then, $H := H_1 \cup H_2$ contains a copy of every $k$-tree $T$ on $n$ vertices with $\Delta_1(T) \leq \Delta$.
\end{corollary}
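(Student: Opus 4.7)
The plan is to reduce Corollary~\ref{corollary:spanning-quasirandom} to Theorem~\ref{theorem:spanning-uniformlydense} by showing that weak quasirandomness implies uniform density, losing only a factor of $2^k$ in the error parameter. Since $k$ is fixed, this loss is absorbed by the hierarchy assumption.

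The main step is an inclusion--exclusion computation. Fix pairwise disjoint $W_1,\dotsc,W_k\subseteq V(H_1)$ and set $U_S := \bigcup_{i\in S} W_i$ for $S\subseteq[k]$. Since every edge has exactly $k$ vertices and the $W_i$ are disjoint, an edge contained in $U_{[k]}$ meets every $W_i$ if and only if it has exactly one vertex in each $W_i$. M\"obius inversion over the lattice of subsets of $[k]$ therefore yields
\begin{equation*}
 e_{H_1}(W_1,\dotsc,W_k) \;=\; \sum_{S\subseteq[k]} (-1)^{k-|S|}\, e_{H_1}(U_S).
\end{equation*}
The same identity applied to the complete $k$-graph on $U_{[k]}$ gives
\begin{equation*}
 \sum_{S\subseteq[k]} (-1)^{k-|S|}\binom{|U_S|}{k} \;=\; \prod_{i=1}^{k} |W_i|.
\end{equation*}

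Now I would apply the weak quasirandomness hypothesis~\eqref{equation:weakquasirandom} to each $U_S$, obtaining $\bigl|e_{H_1}(U_S) - \eta \binom{|U_S|}{k}\bigr| \leq \delta n^k$. Summing the $2^k$ error terms and combining with the two displays above gives
\begin{equation*}
 \Bigl| e_{H_1}(W_1,\dotsc,W_k) - \eta \prod_{i=1}^{k}|W_i| \Bigr| \;\leq\; 2^k \delta n^k,
\end{equation*}
so in particular $e_{H_1}(W_1,\dotsc,W_k) \geq \eta \prod_i |W_i| - 2^k\delta n^k$. Hence $H_1$ is $(2^k\delta,\eta)$-uniformly dense in the sense of the definition preceding Definition~\ref{def:udpm}.

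Finally, setting $\delta' := 2^k \delta$, the hypothesis $1/n \ll \delta \ll \eta,\gamma,1/\Delta,1/k$ still gives $1/n \ll \delta' \ll \eta,\gamma,1/\Delta,1/k$ since $k$ is fixed. Therefore Theorem~\ref{theorem:spanning-uniformlydense}, applied to $H_1$ and $H_2$ with $\delta'$ in place of $\delta$, produces the desired embedding of any $k$-tree $T$ with $\Delta_1(T)\leq\Delta$ into $H=H_1\cup H_2$. The only genuinely nontrivial ingredient is the inclusion--exclusion step, which is elementary; no further obstacle arises.
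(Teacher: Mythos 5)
Your proof is correct and follows exactly the route the paper takes: the paper states that an inclusion--exclusion argument converts weak $(\eta,\delta)$-quasirandomness into $(\eta, 2^k\delta)$-uniform density and then invokes Theorem~\ref{theorem:spanning-uniformlydense}; you have simply spelled out that inclusion--exclusion in full (the identity $e_{H_1}(W_1,\dotsc,W_k)=\sum_{S\subseteq[k]}(-1)^{k-|S|}e_{H_1}(U_S)$, its analogue for binomial coefficients, and the triangle inequality over the $2^k$ terms), together with the routine observation that the constant factor $2^k$ is harmless in the hierarchy.
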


As mentioned in the introduction, being $\gamma n$-large in Corollary~\ref{corollary:spanning-quasirandom} cannot be  replaced with a lower bound of type $\Omega(n^{k-1})$ on the minimum degree of $H$, as evidenced by an example by Ara\'ujo, Piga and Schacht~\cite[Section 8.2]{AraujoPigaSchacht2020}.

Let us now turn  to the notion of {\it typicality}, which was defined in the introduction.
Although weakly quasirandomness does not imply typicality, one can show\footnote{Consider the $k$-graph $F$ with $V(F)=[k]\times\{0,1\}$ and edges of the forms $\{(1,0),(2,x_2),\dotsc,(k,x_k)\}$ and $\{(1,1),(2,x_2),\dotsc,(k,x_k)\}$, where $x_i\in\{0,1\}$ for $i\in\{2,\dotsc,k\}$. It is clear that in any $(\eta,2,\delta)$-typical $k$-graph $H$ one can estimate the density of $H$ and the number of $F$-copies, up to an error term depending on $\delta$. Hence~$H$ satisfies the property $\text{disc}_{{\mathcal Q},\eta}$ for $\mathcal Q=\{\{1\},\{2,\dotsc,k\}\}$, implying that $H$ is weakly quasirandom (see~\cite{ACHPS2018, Towsner2017} for details).} that the converse is true: any  $(\eta, 2, \delta)$-typical $k$-graph is weakly $(\eta, \delta')$-quasirandom, for some constant $\delta'$.
This is the only ingredient we need in order to prove Theorem~\ref{theorem:spanning-typical}.

\begin{proof}[Proof of Theorem~\ref{theorem:spanning-typical}]
Let $1/n_0 \ll \eps' \ll \eps$.
As discussed above, $H$ is weakly $(\varrho, \eps')$-quasirandom.
Also, note that every $(\varrho, 2, \eps)$-typical graph on $n$ vertices is $(\varrho^2 - \eps)n$-large, and cannot have isolated vertices.
Thus the theorem follows from \cref{corollary:spanning-quasirandom}, by taking $H = H_1 = H_2$.
\end{proof}

\section{Further questions}\label{section:remarks}

\subsection{Degree variations}
For any $1\le j<k-1$, 
one can define the {\it minimum  $j$-degree} $\delta_{j}(H)$ of a graph $H$ in analogy to the minimum codegree 
$\delta_{k-1}(H)$ as defined in the introduction. 
Also, define $\st_j(k)$ as the smallest $\delta > 0$ such that for every $\Delta, k \ge 2$ and $\mu > 0$,
every large enough $k$-graph $H$ with $\delta_j(H) \ge (\delta + \mu) \binom{n - j}{k - j}$ contains every $k$-tree $T$ of the same order and with $\Delta_1(T) \leq \Delta$.
Then, by Theorem~\ref{theorem:spanning-codegree}, we have $\st_{k-1}(k) \le 1/2$ for all $k \ge 2$, and by Proposition~\ref{proposition:lowerbound} this is tight.

It would be interesting to understand $\st_j(k)$ for $j < k-1$. If instead of a spanning tree we are looking for a spanning cycle, then some results are known for the analogous problem.
In $k$-graphs, recall that a tight Hamilton cycle in a $k$-graph $H$ on $n$ vertices is a sequence of distinct vertices $v_1, \dotsc, v_n$ such that, for each $i \in \{1, \dotsc, n \}$, the edge $\{ v_i, v_{i+1}, \dotsc, v_{i+k-1} \}$ is present in $H$ (indices understood modulo~$n$).
Then $\hc_j(k)$ is defined accordingly.
It is known that $\hc_{k-2}(k) = 5/9$ for all $k \ge 3$~\cite{PRRS2020, LangSanhueza2020}.
For general $j, k$ satisfying $1 \leq j \leq k-3$, the current best lower~\cite{HZ2016} and upper~\cite{LangSanhueza2020} bounds are \[ 1-\frac{1}{\sqrt{k-j}} \leq \hc_j(k) \leq 1 - \frac{1}{2(k-j)}. \]
Furthermore, the result of R\"odl, Ruci\'nski and Szemer\'edi~\cite{RRS08} mentioned in the introduction states that $\hc_{k-1}(k) = 1/2$ for all $k \ge 2$.
Thus $\hc_{k-1}(k) = \st_{k-1}(k) = 1/2$ for all $k \ge 2$, and we believe the same should be true in general.
\begin{conjecture}
	For all $k > j \ge 1$, we have $\hc_{j}(k) = \st_j(k)$.
\end{conjecture}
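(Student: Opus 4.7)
The plan is to establish the two inequalities $\hc_j(k) \le \st_j(k)$ and $\st_j(k) \le \hc_j(k)$ separately, for all $k > j \ge 1$.

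For $\hc_j(k) \le \st_j(k)$, I would argue that each known lower-bound construction for $\hc_j(k)$ is simultaneously an obstruction for some bounded-degree spanning $k$-tree. The lower-bound constructions in the literature split into ``space-barrier'' and ``divisibility-barrier'' families. For the space-barriers, which generalise \cref{definition:hab}, the aim is to exhibit a bounded-degree spanning $k$-tree (for example a tight Hamilton path, which has $\Delta_1 \le k$) whose unique $k$-partition refuses to fit the barrier; this is essentially the content of \cref{lemma:lowerboundtool} followed by the counting argument in the proof of \cref{proposition:lowerbound}, adapted from codegree to $j$-degree. For divisibility-barriers the task is to track how the barrier restricts the relative sizes of the partition classes of the target tree, and to pick a tree (again typically a long structured tight path) whose class-size profile is incompatible.

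For $\st_j(k) \le \hc_j(k)$ we propose to transplant the three-phase strategy of this paper to the $\delta_j$-setting. Assuming $\delta_j(H) \ge (\hc_j(k) + \mu)\binom{n-j}{k-j}$, the first step is a $\delta_j$-analogue of the Connecting Lemma (\cref{lemma:strengthenedconnectinglemma}) providing, for any two ordered $(k-1)$-tuples in $\partial^o H$, many short tight walks of a prescribed length whose internal vertices lie inside a prescribed set. Given such a lemma, the Reservoir Lemma (\cref{lem:reservoir}), the Embedding Lemma (\cref{lem:embedding}), the tree decomposition (\cref{prop:cut:trees}) and the Absorbing Lemma (\cref{lemma:finishingembedding}) all transfer with essentially no change, because they interact with the host graph only through the ``$\Omega(n)$-large'' hypothesis from \cref{section:connecting}, which the connecting lemma is designed to provide. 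The absorbing $X$-tuples of \cref{subsec:absorbing tuples} are likewise built from large common neighbourhoods and so are not sensitive to whether one starts from a codegree or a $j$-degree hypothesis.

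The main obstacle is the connecting lemma at general $j$. For $j = k-1$ it is nearly immediate, since every two $(k-1)$-tuples share a linear common neighbourhood; for $j < k-1$ this fails, and pairs of $(k-1)$-tuples can even have codegree zero. One would have to construct tight walks of a fixed length between arbitrary ordered $(k-1)$-tuples using the absorbing-path and connection technology developed for tight Hamilton cycles (as in the work of Lang and Sanhueza-Matamala, and the Polcyn--Reiher--R\"odl--Ruci\'nski--Schacht--Schülke line). Matching the conjectural threshold $\hc_j(k)$ exactly would in particular entail improving the current upper bounds on $\hc_j(k)$ itself, so the conjecture is at least as hard as pinpointing $\hc_j(k)$, and any partial progress toward $\hc_j(k)$ should translate, via the above template, into matching partial progress toward $\st_j(k)$.
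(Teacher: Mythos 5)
This is not a theorem in the paper: it is stated as \cref*{section:remarks}'s closing \emph{Conjecture}, with no proof offered, so there is no ``paper's proof'' to compare against. What you have written is, by your own admission, a research programme rather than a proof, and it does not establish either inequality.

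For $\hc_j(k)\leq\st_j(k)$, your argument is that the known lower-bound constructions for $\hc_j(k)$ also exclude some bounded-degree spanning $k$-tree. Even if this is carried out, it only shows that $\st_j(k)$ is at least as large as the \emph{best known lower bound} for $\hc_j(k)$; it does not show $\st_j(k)\geq\hc_j(k)$ unless the exact extremal value of $\hc_j(k)$ is known and realised by those constructions. For $1\leq j\leq k-3$ this is not the case (the paper itself quotes a gap $1-\frac{1}{\sqrt{k-j}}\leq \hc_j(k)\leq 1-\frac{1}{2(k-j)}$), so this direction is genuinely open. There is also a subtler issue: a tight Hamilton path is a bounded-degree spanning $k$-tree, so a tree-forcing degree condition forces a Hamilton path, but that does not yield a Hamilton cycle, and so there is no cheap containment between the two thresholds in either direction.

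For $\st_j(k)\leq\hc_j(k)$, the entire machinery of the paper — \cref{lem:reservoir}, \cref{lem:embedding}, \cref{prop:cut:trees}, \cref{lemma:finishingembedding} — is conditioned on the host graph being $\Omega(n)$-large, which for $j=k-1$ follows immediately from $\delta_{k-1}(H)\geq(1/2+\gamma)n$ but for $j<k-1$ is false in general: two $(k-1)$-sets can have empty common neighbourhood even under strong $\delta_j$ hypotheses. Replacing the $\Omega(n)$-large assumption by a $j$-degree-based connecting lemma at the conjectural threshold is precisely the hard open problem, and as you yourself note the conjecture is ``at least as hard as pinpointing $\hc_j(k)$.'' That concession is the right diagnosis, and it means the proposal is a plausible heuristic for \emph{why} the two thresholds should coincide, not a proof that they do.
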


\subsection{Trees of larger maximum degree}

Another possible generalisation of Theorem~\ref{theorem:spanning-codegree} would be to relax the condition on $\Delta_1(T) = O(1)$, allowing it to grow with $n$.
In the graph case, Koml\'os, S\'ark\"ozy, and Szemer\'edi~\cite{KomlosSarkozySzemeredi2001} strengthened their earlier result, Theorem~\ref{thm:KSSoriginal}, considerably by showing that, with the same minimum degree conditions, one can  find all trees of maximum degree at most $c n / \log n$, for a fixed $c > 0$ depending on the approximation $\gamma$ only. They also gave an example showing this bound is tight up to a multiplicative factor.

A natural adaptation of their example shows that, for $k \ge 3$, the bound on $\Delta_{1}(T)$ in Theorem~\ref{theorem:spanning-codegree} cannot be larger than $O(n / \log n)$.
Indeed, let $T$ be the $k$-tree consisting of a vertex $v$ whose link graph is a $(k-1)$-tight path $P$ of length $c\log n$, for some sufficiently small constant $c>0$. For each set $S$ of $k-1$ consecutive vertices in $P$, we add $n/(c\log n)$ new vertices adjacent to $S$. Then $T$ has maximum degree $\Theta(n/\log n)$, and a straightforward calculation shows that, with high probability,  the binomial random $k$-graph of edge density $p=0.9$ does not contain an ordered set $U$ of size $c\log n$ such that each vertex outside $U$ is adjacent to some $k-1$ consecutive vertices in $U$.

\section*{Acknowledgment}
The authors are very grateful to Esteban Quiroz Camarasa for many valuable discussions in the beginning of this project,
and to Yanitsa Pehova for useful comments.

\sloppy\printbibliography

\end{document}